\tikzset{snake it/.style={decorate, decoration=snake}}
\newtheorem{theorem}{Theorem}[section]
\newtheorem{prop}[theorem]{Proposition}
\newtheorem{lemma}[theorem]{Lemma}
\theoremstyle{definition}
\newtheorem{defin}[theorem]{Definition}
\newtheorem{claim}[theorem]{Claim}
\newcounter{propcounter}
\newenvironment{poc}{\begin{proof}[Proof of claim]}{\end{proof}}
\newcommand{\eps}{\varepsilon}
\newcommand{\bN}{\mathbb{N}}
\newcommand{\cQ}{\mathcal{Q}}
\newcommand{\cP}{\mathcal{P}}
\newcommand{\cZ}{\mathcal{Z}}
\definecolor{darkblue}{rgb}{0,0,0.5}
\definecolor{armygreen}{rgb}{0.29, 0.33, 0.13}
\definecolor{darkmagenta}{rgb}{0.55, 0.0, 0.55}
\definecolor{lightseagreen}{rgb}{0.13, 0.7, 0.67}
\definecolor{darktangerine}{rgb}{1.0, 0.66, 0.07}
\definecolor{deepcarmine}{rgb}{0.66, 0.13, 0.24}
\definecolor{darkblue}{rgb}{0.0, 0.0, 0.55}
\definecolor{darkraspberry}{rgb}{0.53, 0.15, 0.34}
\definecolor{darkpastelgreen}{rgb}{0.01, 0.75, 0.24}
\definecolor{darkcyan}{rgb}{0.0, 0.55, 0.55}
\title{How to build a pillar: a proof of Thomassen's conjecture}
 \author{
	Irene Gil Fern\'andez
	\thanks{Mathematics Institute and DIMAP, University of Warwick, Coventry, CV4 7AL, UK. Email: {\tt irene.gil-fernandez@warwick.ac.uk}.}
	\and
	Hong Liu
	\thanks{Extremal Combinatorics and Probability Group (ECOPRO), Institute for Basic Science (IBS), Daejeon, South Korea. Email: {\tt hongliu@ibs.re.kr}. H.L. was supported by the Institute for Basic Science (IBS-R029-C4) and
	the UK Research and Innovation Future Leaders Fellowship MR/S016325/1.
	}
	}
\date{\today}
\begin{document}

	\maketitle

	\begin{abstract}
	 Carsten Thomassen in 1989 conjectured that if a graph has minimum degree more than the number of atoms in the universe ($\delta(G)\ge 10^{10^{10}}$), then it contains a \emph{pillar}, which is a graph that consists of two vertex-disjoint cycles of the same length, $s$ say, along with $s$ vertex-disjoint paths of the same length which connect matching vertices in order around the cycles. Despite the simplicity of the structure of pillars and various developments of powerful embedding methods for paths and cycles in the past three decades, this innocent looking conjecture has seen no progress to date. In this paper, we give a proof of this conjecture by building a pillar (algorithmically) in sublinear expanders.
	\end{abstract}

	\section{Introduction}\label{sec-intro}
Which structures can we guarantee in a graph $G$ by imposing a condition on the average degree of $G$, $d(G)$? Extremal problems under this setting have been well studied. For example, given any graph $H$ the average degree condition required to contain a copy of $H$ is the well-known Tur\'an problem. Such embedding results are often useful for problems in other areas. To just name a historical one: Erd\H{o}s~\cite{Erd38} in 1938 studied multiplicative Sidon sets via the embeddings of 4-cycles in graphs. It is well-known that unless $H$ is acyclic, the average degree condition needed to force a copy of $H$ must depend on the number of vertices in the host graph $G$. 

What about sparse host graphs $G$ with only constant average degree? It turns out that there are rich families of structures that can be embedded in such sparse graphs. For example, any graph with $d(G)\geq 2$ can easily be seen to contain a cycle. A cycle can be viewed as a subdivision of $K_3$, the complete graph on three vertices. Given a graph $H$, an \emph{$H$-subdivision} is a graph obtained from $H$ by subdividing each
of its edges into internally vertex-disjoint paths. The notion of subdivision connects graph theory and topology. In 1930, Kuratowski famously proved that a graph is planar if and only if it does not contain a subdivision of $K_{5}$ or the complete bipartite graph with three vertices in each
part~\cite{Kur30}. Generalising the above observation about cycles, Mader~\cite{Mader67} in 1967 proved that, for each $k\in\bN$, average degree at least some sufficiently large constant, depending only on $k$, is enough to guarantee a $K_k$-subdivision. In another direction, Bollob\'as~\cite{Bol77} in 1977 showed that additional conditions may be placed on the cycle length by raising the average degree condition. That is, for any $a\in \mathbb{N}$ and odd $b$, average degree at least some sufficiently large constant guarantees a cycle with length congruent to $a \mod b$. Other substructures whose existence is implied by sufficiently large constant average degree are $k$ vertex-disjoint cycles, for each $k$, shown by Corradi and Hajnal~\cite{CH63} in 1963, whose lengths may even be required to all be the same (see Egawa~\cite{Ega96}) or form an arithmetic progression (see Verstra\"ete~\cite{Ver00}).

More recently, the second author and Montgomery~\cite{LiuMont20} proved that, for each $k$, graphs with large average degree must contain a subdivision of $K_k$ such that all its edges are subdivided the same number of times. In~\cite{LiuMont20}, techniques were introduced to construct paths (and hence cycles) in extremely sparse graphs while controlling their length. These techniques imply, for example, that sufficiently large constant average degree guarantees a cycle whose length is a power of 2.

\subsection{Main result}
In this paper, we are interested in embedding pillars as subgraphs. A \emph{pillar} is a graph that consists of two disjoint cycles $C_1$ and $C_2$ of the same length, say $s$, with vertex-sets $V(C_1)=\{v_1,\dots,v_s\}$ and $V(C_2)=\{w_1,\dots,w_s\}$, and $s$ disjoint paths $Q_1,\dots,Q_s$, all of the same length, such that $Q_i$ is a $v_i,w_i$-path, for each $i\in[s]$; see Figure~\ref{fig:pillar}.

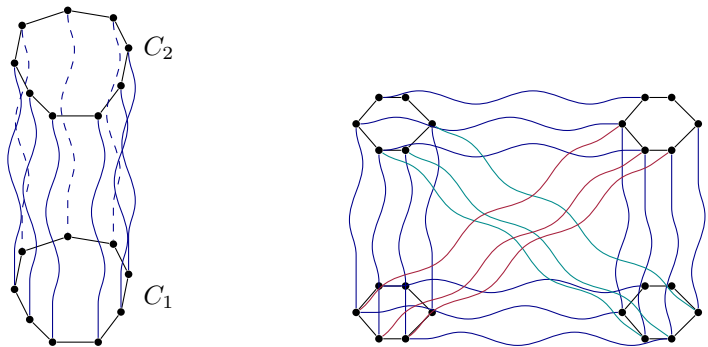
\begin{figure}[h]
\centering
\begin{tikzpicture}
	\node[inner sep= 1pt](a1) at (2.6,0.1)[circle,fill]{};
	\node[inner sep= 1pt](a2) at (2,0.1)[circle,fill]{};
	\node[inner sep= 1pt](a3) at (1.7,0.4)[circle,fill]{};
	\node[inner sep= 1pt](a4) at (1.5,0.8)[circle,fill]{};
	\node[inner sep= 1pt](a5) at (1.6,1.3)[circle,fill]{};
	\node[inner sep= 1pt](a6) at (2.2,1.5)[circle,fill]{};
	\node[inner sep= 1pt](a7) at (2.8,1.4)[circle,fill]{};
	\node[inner sep= 1pt](a8) at (3,1)[circle,fill]{};
	\node[inner sep= 1pt](a9) at (2.9,0.5)[circle,fill]{};
	\draw (a1)--(a2);
	\draw (a2)--(a3);
	\draw (a3)--(a4);
	\draw (a4)--(a5);
	\draw (a5)--(a6);
	\draw (a6)--(a7);
	\draw (a7)--(a8);
	\draw (a8)--(a9);
	\draw (a9)--(a1);
	\node[inner sep= 1pt](b1) at (2.6,3.1)[circle,fill]{};
	\node[inner sep= 1pt](b2) at (2,3.1)[circle,fill]{};
	\node[inner sep= 1pt](b3) at (1.7,3.4)[circle,fill]{};
	\node[inner sep= 1pt](b4) at (1.5,3.8)[circle,fill]{};
	\node[inner sep= 1pt](b5) at (1.6,4.3)[circle,fill]{};
	\node[inner sep= 1pt](b6) at (2.2,4.5)[circle,fill]{};
	\node[inner sep= 1pt](b7) at (2.8,4.4)[circle,fill]{};
	\node[inner sep= 1pt](b8) at (3,4)[circle,fill]{};
	\node[inner sep= 1pt](b9) at (2.9,3.5)[circle,fill]{};
	\draw (b1)--(b2);
	\draw (b2)--(b3);
	\draw (b3)--(b4);
	\draw (b4)--(b5);
	\draw (b5)--(b6);
	\draw (b6)--(b7);
	\draw (b7)--(b8);
	\draw (b8)--(b9);
	\draw (b9)--(b1);
	\draw[decorate, decoration=snake, segment length=15mm,darkblue] (b1)--(a1);
	\draw[decorate, decoration=snake, segment length=15mm,darkblue] (b2)--(a2);
	\draw[decorate, decoration=snake, segment length=15mm,darkblue] (b3)--(a3);
	\draw[decorate, decoration=snake, segment length=15mm,darkblue] (b4)--(a4);
	\draw[decorate, decoration=snake, segment length=15mm,darkblue][dashed] (b5)--(a5);
	\draw[decorate, decoration=snake, segment length=15mm,darkblue][dashed] (b6)--(a6);
	\draw[decorate, decoration=snake, segment length=15mm,darkblue][dashed] (b7)--(a7);
	\draw[decorate, decoration=snake, segment length=15mm,darkblue] (b8)--(a8);
	\draw[decorate, decoration=snake, segment length=15mm,darkblue] (b9)--(a9);
	\node[inner sep= 1pt](l1) at (3.4,0.7){\small$C_1$};
	\node[inner sep= 1pt](l2) at (3.4,4){\small$C_2$};
\end{tikzpicture}
\hspace{2cm}
\begin{tikzpicture}
	\node[inner sep= 1pt](a1) at (0.5,0.5)[circle,fill]{};
	\node[inner sep= 1pt](a2) at (0.15,0.5)[circle,fill]{};
	\node[inner sep= 1pt](a3) at (-0.15,0.85)[circle,fill]{};
	\node[inner sep= 1pt](a4) at (0.15,1.2)[circle,fill]{};
	\node[inner sep= 1pt](a5) at (0.5,1.2)[circle,fill]{};
	\node[inner sep= 1pt](a6) at (0.85,0.85)[circle,fill]{};
	\draw (a1)--(a2);
	\draw (a2)--(a3);
	\draw (a3)--(a4);
	\draw (a4)--(a5);
	\draw (a5)--(a6);
	\draw (a6)--(a1);
	\node[inner sep= 1pt](b1) at (4,0.5)[circle,fill]{};
	\node[inner sep= 1pt](b2) at (3.65,0.5)[circle,fill]{};
	\node[inner sep= 1pt](b3) at (3.35,0.85)[circle,fill]{};
	\node[inner sep= 1pt](b4) at (3.65,1.2)[circle,fill]{};
	\node[inner sep= 1pt](b5) at (4,1.2)[circle,fill]{};
	\node[inner sep= 1pt](b6) at (4.35,0.85)[circle,fill]{};
	\draw (b1)--(b2);
	\draw (b2)--(b3);
	\draw (b3)--(b4);
	\draw (b4)--(b5);
	\draw (b5)--(b6);
	\draw (b6)--(b1);
	\node[inner sep= 1pt](c1) at (4,3)[circle,fill]{};
	\node[inner sep= 1pt](c2) at (3.65,3)[circle,fill]{};
	\node[inner sep= 1pt](c3) at (3.35,3.35)[circle,fill]{};
	\node[inner sep= 1pt](c4) at (3.65,3.7)[circle,fill]{};
	\node[inner sep= 1pt](c5) at (4,3.7)[circle,fill]{};
	\node[inner sep= 1pt](c6) at (4.35,3.35)[circle,fill]{};
	\draw (c1)--(c2);
	\draw (c2)--(c3);
	\draw (c3)--(c4);
	\draw (c4)--(c5);
	\draw (c5)--(c6);
	\draw (c6)--(c1);
	\node[inner sep= 1pt](d1) at (0.5,3)[circle,fill]{};
	\node[inner sep= 1pt](d2) at (0.15,3)[circle,fill]{};
	\node[inner sep= 1pt](d3) at (-0.15,3.35)[circle,fill]{};
	\node[inner sep= 1pt](d4) at (0.15,3.7)[circle,fill]{};
	\node[inner sep= 1pt](d5) at (0.5,3.7)[circle,fill]{};
	\node[inner sep= 1pt](d6) at (0.85,3.35)[circle,fill]{};
	\draw (d1)--(d2);
	\draw (d2)--(d3);
	\draw (d3)--(d4);
	\draw (d4)--(d5);
	\draw (d5)--(d6);
	\draw (d6)--(d1);
	\draw[decorate, decoration=snake, segment length=15mm,darkblue] (b1)--(a1);
	\draw[decorate, decoration=snake, segment length=15mm,darkblue] (b3)--(a3);
	\draw[decorate, decoration=snake, segment length=15mm,darkblue] (b4)--(a4);
	\draw[decorate, decoration=snake, segment length=15mm,darkblue] (b1)--(c1);
	\draw[decorate, decoration=snake, segment length=15mm,darkblue] (b2)--(c2);
	\draw[decorate, decoration=snake, segment length=15mm,darkblue] (b3)--(c3);
	\draw[decorate, decoration=snake, segment length=15mm,darkblue] (b6)--(c6);
	\draw[decorate, decoration=snake, segment length=15mm,darkblue] (d1)--(a1);
	\draw[decorate, decoration=snake, segment length=15mm,darkblue] (d2)--(a2);
	\draw[decorate, decoration=snake, segment length=15mm,darkblue] (d3)--(a3);
	\draw[decorate, decoration=snake, segment length=15mm,darkblue] (d6)--(a6);
	\draw[decorate, decoration=snake, segment length=15mm,darkblue] (d2)--(c2);
	\draw[decorate, decoration=snake, segment length=15mm,darkblue] (d3)--(c3);
	\draw[decorate, decoration=snake, segment length=15mm,darkblue] (d4)--(c4);
	\draw[decorate, decoration=snake, segment length=15mm,darkcyan] (d1) -- (b1);
	\draw[decorate, decoration=snake, segment length=15mm,darkcyan] (d2)--(b2);
	\draw[decorate, decoration=snake, segment length=15mm,darkcyan] (d6)--(b6);
	\draw[decorate, decoration=snake, segment length=15mm,deepcarmine] (a1)--(c1);
	\draw[decorate, decoration=snake, segment length=15mm,deepcarmine] (a2)--(c2);
	\draw[decorate, decoration=snake, segment length=15mm,deepcarmine] (a3)--(c3);
\end{tikzpicture}
\caption{A pillar and a $K_4$-pillar.\label{fig:pillar}}
\end{figure}

In 1989, Thomassen~\cite{Tho89} conjectured  that every graph with sufficiently large constant minimum degree contains a pillar. There have been numerous powerful methods for embedding paths and cycles developed in the past three decades, such as Robertson and Seymour's work on graph linkage~\cite{RS95} (see also~\cite{BT96,TW05}), Bondy and Simonovits's use of Breadth First Search~\cite{BS74} (see also~\cite{Pik12}), Krivelevich and Sudakov's use of Depth First Search~\cite{KS13} and the use of expanders in a long line of work by Krivelevich (see e.g. his survey~\cite{Kri19} and more recently~\cite{EKK21}), see also a recent method developed by Gao, Huo, Liu and Ma~\cite{GHLM21}. Despite these developments and the simple nature of pillars, the innocent looking conjecture of Thomassen has seen no progress in the past thirty years. 

One explanation for the difficulty of this conjecture is the following. Cycles are not hard to embed as all vertices within are of degree 2; and subdivisions, even though having vertices of degree at least 3, can be embedded so that all these high degree vertices are pairwise far apart. Thus, embedding cycles or subdivisions boils down to anchoring at some (well-positioned) vertices and constructing vertex-disjoint paths between pairs of them. On the other hand, in a pillar, degree-3 vertices are jammed into the two cycles $C_1$ and $C_2$, which have to be embedded one next to another. To see why degree-3 vertices are game changers, a classical result of Pyber, R\"odl and Szemer\'edi~\cite{PRSz95} shows that constant average degree does \emph{not} suffice to force a 3-regular subgraph. More precisely, they constructed an $n$-vertex graph $G$ with $d(G)=\Omega(\log\log n)$ which contains no $r$-regular subgraphs for any $r\ge 3$. A priori, it is not clear whether a pillar behaves more like a subdivision or a 3-regular graph.

\smallskip

Our main result confirms Thomassen's conjecture, showing that pillars are fundamentally \emph{different} from 3-regular graphs in the sense that they can be forced by large constant degree.

\begin{theorem}\label{thm-thomassen-conj}
	There exists a constant $C>0$ such that every graph with average degree at least $C$ contains a pillar.
\end{theorem}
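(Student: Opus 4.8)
The plan is to reduce to a well-structured sparse host and then build the pillar greedily. First I would pass to a subgraph $G'\subseteq G$ which is a \emph{sublinear expander} in the sense of Komlós–Szemerédi (with the appropriate $\eps(n)=\Theta(1/\log^2 n)$ expansion function): a standard argument shows that any graph with large constant average degree contains such an expander $G'$ with $\delta(G')\geq d(G')/2$ still a large constant and with good expansion of all small vertex sets. Working inside $G'$, the key feature I will exploit is that between any two sufficiently large vertex sets one can route a path, and more importantly, that one can route \emph{many vertex-disjoint paths of exactly prescribed, equal length} between prescribed endpoints provided the endpoints have enough ``room'' around them — this is the kind of length-controlled connection lemma that powers the arguments of Liu–Montgomery and that the rest of this paper will presumably formalise.

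The construction of the pillar then proceeds in three stages. \emph{Stage 1: build the skeleton.} Fix a large constant $s$ and find a short cycle $C_1$ of length $s$ in $G'$ (expansion gives short cycles of controlled length, or one works with a power-of-two type length as in~\cite{LiuMont20}); then, using a ball around $C_1$ that the expander provides disjointly, find a second cycle $C_2$ of the same length $s$ together with a first connecting path $Q_1$ from $v_1\in C_1$ to $w_1\in C_2$. \emph{Stage 2: attach the legs one at a time.} Suppose $Q_1,\dots,Q_i$ have been built, all of the same length $\ell$, pairwise disjoint and internally disjoint from $C_1\cup C_2$. The vertices $v_{i+1}\in C_1$ and $w_{i+1}\in C_2$ each still have, after deleting everything used so far, a large expanding neighbourhood (here one uses that $\delta(G')$ is much larger than $s$, so only a bounded number of edges at each $v_j,w_j$ are ever consumed, leaving most of the degree — and hence the local expansion — intact). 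Apply the length-controlled connection lemma to join $v_{i+1}$ to $w_{i+1}$ by a path of length exactly $\ell$ avoiding the previously used vertices. \emph{Stage 3:} after $s$ iterations we obtain $Q_1,\dots,Q_s$ and hence a pillar.

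The main obstacle is Stage 2, specifically maintaining the invariant that \emph{every} remaining $v_j,w_j$ still has enough local expansion to admit an $\ell$-path to its partner while avoiding a growing (but still bounded, since $s$ is constant) set of forbidden vertices; the length of the paths is fixed at the outset by the very first connection, so the later connections have no freedom to adjust length and must hit that target exactly. Handling this requires (i) choosing $s$ and the initial ball sizes so that after removing the skeleton and all legs the relevant balls are still expanding — a counting argument using $\delta(G')\gg s$ — and (ii) a robust version of the connection lemma that produces a path of a \emph{prescribed} length in a slightly damaged expander, which is where the bulk of the technical work lies. A secondary subtlety is ensuring the two cycles can be found with identical length and sufficiently far apart in $G'$ to seed disjoint expanding balls; this is handled by first locating one cycle and then re-running the cycle-finding argument inside the expander minus a neighbourhood of it. Given these ingredients, the iteration is short (only $O(s)=O(1)$ rounds) so no accumulation of error over a long process occurs, and the theorem follows.
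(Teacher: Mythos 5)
Your overall strategy (pass to a sublinear expander, then join matching cycle vertices by equal-length paths via a length-controlled connection lemma) matches the paper's in outline, but two steps contain genuine gaps. First, you cannot ``fix a large constant $s$ and find a short cycle $C_1$ of length $s$'': a constant-degree sublinear expander may have arbitrarily large girth and an uncontrollable cycle spectrum, so neither the length of the first cycle nor the existence of a second cycle of \emph{the same} length is available on demand. The paper circumvents this by producing more than $\log n$ disjoint copies of its cycle structure, each with cycle length at most $\log n$, and invoking the pigeonhole principle to match two lengths; the common length ends up being some uncontrolled $s\le \log n$ rather than a constant, so your ``$O(s)=O(1)$ rounds'' bookkeeping does not apply either.

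Second, and more seriously, your Stage 2 invariant --- that each remaining $v_j,w_j$ retains a large expanding neighbourhood because only boundedly many of its edges are consumed --- does not hold in this regime. The connection lemma you invoke (Lemma~\ref{lem-finalconnect}) requires each endpoint to be supplied with a connected set of polylogarithmic size $D\ge \log^{10}n$ and small radius around it. A vertex of constant degree $d$ on the cycle does not come with such a set, and growing one from scratch while avoiding the previously used vertices is exactly the hard part: sublinear expansion only acts on sets of size at least $\eps_2 d/2$ and expands by a factor of order $1/\log^{2}$, so the forbidden set can entirely absorb the small balls around $v_j$ unless the expansion has been secured in advance. This is why the paper works with krakens --- cycles in which every vertex is pre-equipped with its own reserved leg, a $(\log^{b}n,m)$-expansion disjoint from everything else --- and why its main technical contribution is the \emph{robust} kraken lemma (Lemma~\ref{lem-robust-kraken}), which finds such a structure while avoiding any prescribed polylogarithmic set, so that two disjoint krakens with matching cycle lengths can be extracted and then linked leg-to-leg. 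Your proposal defers precisely this content to ``a robust version of the connection lemma'' without supplying the structural idea that makes it work; as written, the argument fails at the first attempt to expand a bare cycle vertex past the used vertices.
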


In fact, our method provides embeddings of more general structures. Here is one example that can be seen as a common generalisation of a subdivision and a pillar. Given $k\in\bN$, a $K_k$-\emph{pillar} is a graph that consists of $k$ disjoint cycles $C_1,\ldots, C_k$ of the same length, such that for any distinct $C_i,C_j$ there is a collection $\cQ_{i,j}$ of paths of the same length connecting matching vertices in order around $C_i, C_j$, and all paths in $\cup_{ij\in{[k]\choose 2}}\cQ_{i,j}$ are pairwise disjoint (see Figure~\ref{fig:pillar}). Note that a pillar is a $K_2$-pillar; and a $K_k$-subdivision can be obtained from taking one appropriate vertex from each cycle $C_i$ in a $K_k$-pillar along with the corresponding paths between pairs of them. Our method can be extended to show the following. We leave its proof for enthusiastic readers.

\begin{theorem}\label{thm:pillar-k}
	Given $k\in\bN$, there exists $C=C(k)>0$ such that every graph with average degree at least $C$ contains a $K_k$-pillar.
\end{theorem}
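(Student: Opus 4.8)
The plan is to re-run the proof of Theorem~\ref{thm-thomassen-conj} with the number $2$ of cycles replaced by $k$ throughout, so I first recall its skeleton and then indicate the modifications. As there, one may assume the host graph is a sublinear expander: a theorem of Koml\'os and Szemer\'edi guarantees that every graph of average degree at least $C$ contains a subgraph $H$ that is a sublinear expander and still has average (indeed, after a further cleaning, minimum) degree $\Omega(C)$; taking $C=C(k)$ large enough, $H$ will have all the room and degree we need. The property we exploit is that in $H$ any two not-too-large vertex sets can be joined by a path of length polylogarithmic in $|V(H)|$, and that this persists after deleting any bounded-size forbidden set; this is precisely what drives the length-controlled path gadgets (\emph{adjusters}/\emph{units}) of the $k=2$ proof, which produce a path of any prescribed length in a suitable window between two prescribed, well-separated high-degree vertices while avoiding a previously used bounded set.

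The construction then proceeds as follows. As in the $k=2$ case, build in lockstep $k$ vertex-disjoint \emph{rails} $R_1,\dots,R_k$, each partitioned into $s$ consecutive \emph{blocks}, and inside block $i$ of rail $R_a$ plant a small \emph{hub} exposing $k-1$ independent high-degree attachment vertices, so that every rail vertex can attain its eventual pillar-degree $k+1$. Next, processing the pairs $\{a,b\}\in\binom{[k]}{2}$ and block indices $i\in[s]$ one at a time, route a connecting path $Q^{a,b}_i$ between the $i$-th attachment points of $R_a$ and $R_b$ using the adjuster toolkit, each new path avoiding everything built so far; since $k$ is a constant, the total number $\binom{k}{2}s$ of connectors and the per-block branching stay bounded, so the forbidden sets remain within the tolerance of the gadgets. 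Finally, use adjusters to equalise lengths: first make all $\binom{k}{2}s$ connectors have a common length, then make all $k$ rails have a common length, and then close each rail $R_a$ into a cycle $C_a$ of the common length; adjusters only lengthen, so we equalise upward to the respective maxima, which is why we need $H$ to contain sufficiently many pairwise-disjoint \emph{reservoirs} --- the same counting as for Theorem~\ref{thm-thomassen-conj}, with every constant inflated by a fixed power of $k$. The resulting subgraph is exactly a $K_k$-pillar in the sense of Figure~\ref{fig:pillar}.

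The step I expect to be the main obstacle is the simultaneous construction and pairwise disjointness of the connectors at a fixed block. Whereas the original pillar has a single degree-$3$ rung vertex per block per rail, here each rail vertex carries pillar-degree $k+1$, and each block must emit $\binom{k}{2}$ internally disjoint connectors that also avoid the rails and all earlier blocks --- the $k$-fold analogue of the ``degree-$3$ vertices jammed into two cycles'' phenomenon flagged in the introduction. This forces the hub structures to supply genuinely independent escape routes with enough spare degree, and making the bookkeeping of the $\binom{k}{2}$ connector families close up --- in particular keeping every forbidden set below the threshold at which the sublinear-expander connectivity lemma still applies --- is the crux. Everything else (passing to the expander, the adjuster gadgets, the final length equalisation) should be a routine rescaling of the $k=2$ argument, since $k$ is fixed and all relevant quantities stay bounded in terms of $k$; choosing $C(k)$ large enough then yields the theorem.
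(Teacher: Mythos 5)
The paper does not actually prove Theorem~\ref{thm:pillar-k}: the authors state it as an extension of the machinery behind Theorem~\ref{thm-thomassen-conj} and explicitly ``leave its proof for enthusiastic readers.'' The intended route is nonetheless clear from that machinery: generalise the kraken so that every cycle vertex carries $k-1$ pairwise-disjoint legs, prove the corresponding robust embedding lemma (the analogue of Lemma~\ref{lem-robust-kraken}), find of order $k\log n$ disjoint such krakens, pigeonhole to obtain $k$ of them whose cycles have equal length, and then link the $\binom{k}{2}s$ pairs of matching vertices one at a time via Lemma~\ref{lem-finalconnect}, exactly as in Lemma~\ref{lem-link-2pulp-adj}.

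Your proposal does not follow this route, and it leaves the decisive step unproved. You propose to build the $k$ cycles directly as ``rails'' whose every vertex carries a ``hub'' exposing $k-1$ independent high-degree attachment points, and you yourself identify the supply of these $k-1$ independent escape routes per cycle vertex as ``the crux'' --- but you offer no construction for it. This is precisely the difficulty the kraken machinery exists to solve: a cycle in which every vertex has a large, robustly expandable boundary cannot be built greedily, which is why the paper needs Lemma~\ref{lem-kraken-from-kraken} and, above all, its robust version Lemma~\ref{lem-robust-kraken}, described in the introduction as the main challenge and contribution of the paper. Declaring everything else ``a routine rescaling'' while leaving this step as an acknowledged obstacle makes the proposal a plan rather than a proof. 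Two secondary problems: your recollection of the $k=2$ skeleton is inaccurate --- the paper does not build two rails and then equalise and close them into cycles with adjusters; it finds $\log n+1$ disjoint krakens, pigeonholes on cycle length, and gives every connecting path a prescribed common length (of the correct parity) directly via Lemma~\ref{lem-finalconnect}, rather than ``equalising upward'' afterwards. And your final step of closing each rail into a cycle after all connectors are attached would need its own parity and avoidance argument, which you do not supply.
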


\subsection{Discussions}
\subsubsection{Our approach}
In our proof, we make crucial use of a notion called sublinear expanders (see Section~\ref{subsec-robust-expander}). We prove that pillars can be built in sublinear expanders, from which Theorem~\ref{thm-thomassen-conj} follows. There has been a sequence of advancements on the theory of sublinear expanders, which results in resolutions of several long-standing conjectures. We refer the interested readers to~\cite{GFKimKimLiu21,HKL20,CruxCycle, HHKL21, KLShS17,MadLM,LiuMont20,LWY}.

Our constructive proof can be turned into an algorithm. To deal with the troublesome degree-3 vertices in a pillar, we use a structure called \emph{kraken} (see Definition~\ref{def-kraken} and Figure~\ref{pic-kraken}). A prototype of this structure appeared in the work of Haslegrave, Kim and Liu (\emph{nakji} in~\cite{HKL20}) on sparse minors; it was formally introduced in the work of Gil Fern\'andez, Kim, Kim and Liu~\cite{GFKimKimLiu21} on cycles with geometric constraints. Roughly speaking, a kraken consists of a cycle, in which every vertex has a large `boundary'. If we manage to find two krakens, then we can link the matching vertices in their cycles by expanding and connecting their boundaries to obtain a pillar. 

Finding a \emph{single} kraken in a sublinear expander is already not an easy task; this was done in~\cite{GFKimKimLiu21} with an involved argument. To find two krakens here, we prove a \emph{robust} embedding lemma for kraken (Lemma~\ref{lem-robust-kraken}), which is the main challenge and contribution of this paper. We expect it to have further applications for embedding problems. Its proof uses the existence of kraken in sublinear expanders from~\cite{GFKimKimLiu21} as black box and builds on the techniques developed in the work of Liu and Montgomery~\cite{LiuMont20} (see the beginning of Section~\ref{sec-robust-kraken} for the high level ideas). Various difficulties occur during the embedding process as the expansion we work with is only sublinear, hence not `additive', which causes additional technicalities when implementing the above natural approach. For instance, to carry out the step of linking two krakens, we have to impose additional structural property on the krakens (see Lemma~\ref{lem-link-2pulp-adj}).

\subsubsection{Future directions}
Call a class of graphs \emph{forcible by large degree} (or forcible in short) if all graphs with large constant average degree contains one of them as a subgraph. As mentioned, by the result of Pyber, R\"odl and Szemer\'edi~\cite{PRSz95}, the class of $3$-regular graphs is not forcible, while on the contrast, our main result shows that `semi-$3$-regular' pillars are. An intriguing problem is to figure out where the line is. That is, would it be possible to give certain characterisation of close-to-3-regular graphs that are forcible? A good starting point would be to find more natural forcible classes of graphs. 

One possible concrete direction is the following. A \emph{prism} is a Cartesian product of an edge and a cycle. We can think of a pillar as a \emph{partial} subdivision of a prism, in which only the matching edges linking two cycles in the prism are subdivided. In a sense, a pillar is a minimal partial subdivision of a prism that is forcible while keeping the closeness of the degree-3 vertices. What we mean is that if we allow two consecutive matching edges in a prism to be kept unsubdivided, then the resulting class would not be forcible as such graphs all contain a 4-cycle. Indeed, it is well-known in extremal graph theory that there are $n$-vertex $4$-cycle-free graphs with average degree $\Omega(\sqrt{n})$ (consider e.g. the incidence graphs of points and lines in projective planes). In general, no upper bound can be imposed on the girth of graphs in a forcible class. 
\begin{itemize}
\item What are other obstructions to forcibility apart from bounded girth?

\item Give more (non)examples of forcible class of partial subdivision of 3-regular (or min-degree-3) graphs with adjacencies of degree-3 vertices largely preserved.
\end{itemize}

Finally, we would like to draw attention to one particular class of graphs that we do not know whether it is forcible or not. A set of $k$ edge-disjoint cycles $C_1,\ldots, C_k$ form $k$-\emph{nested cycles without crossing} if $V(C_1)\subset V(C_2)\subset \ldots \subset V(C_k)$ and for each $i\in[k-1]$, no two edges of $C_i$ are crossing chords in $C_{i+1}$ (i.e., if $C_{i+1}=v_1\dots v_{\ell}$, then $C_i$ has no two edges $v_iv_{i'}$ and $v_jv_{j'}$ with $i<j<i'<j'$). Very recently, Kim, Kim and the authors~\cite{GFKimKimLiu21} proved that $2$-nested cycles without crossing are forcible, answering an old question of Erd\H{o}s. Thomassen~\cite{Tho89} made the following stronger conjecture, which remains open: $k$-nested cycles without crossing are forcible for any fixed $k$.

\medskip

\noindent\textbf{Organisation.} Section~\ref{sec-prelim} contains the tools needed in our proofs. Theorem~\ref{thm-thomassen-conj} will be proved in Section~\ref{sec-main-proof}, which is split into two lemmas. In Section~\ref{sec-robust-kraken} we prove the key lemma that we can find a kraken robustly in a sublinear expander; and Section~\ref{sec-link-2krakens} is devoted to linking krakens using paths of the same length to obtain a pillar. 
	
	\section{Tools and building blocks}\label{sec-prelim}
	
	\subsection{Notations}
	For $n\in\mathbb{N}$, let $[n]:=\{1,\dots,n\}$. If we claim that a result holds for $0<a\ll b,c\ll d<1$, it means that there exist positive functions $f,g$ such that the result holds as long as $a<f(b,c)$ and $b<g(d)$ and $c<g(d)$. We will not compute these functions explicitly. In many cases, we treat large numbers as if they are integers, by omitting floors and ceilings if it does not affect the argument. We write $\log$ for the base-$e$ logarithm.
	
	Given a graph $G$, denote its average degree $2e(G)/|G|$ by $d(G)$, and write $\delta(G)$ and $\Delta(G)$ for its minimum and maximum degree, respectively. We write $N(v)$ for the set of neighbours of $v\in V(G)$ and we denote by $N_G(v,W)$ the set of neighbours of $v$ in $W\subseteq V(G)$ and $d_G(v,W)=|N_G(v,w)|$. Denote the (external) neighbourhood of $W$ by $N(W)=(\cup_{v\in W}N(v))\setminus W$. We write $N_G^0(W)=W$, and, for each integer $k\ge 1$, let $B_G^k(W)=\cup_{0\le j\le k}N_G^{j}(W)$ the ball of radius $k$ around $W$ in $G$, that is, the set of all vertices a graph distance at most $k$ to $W$. We let $B(W)=B^1(W)$.
	
	Let $F\subseteq G$ and $H$ be graphs, and $U\subseteq V(G)$. We write $G[U]\subseteq G$ for the induced subgraph of $G$ on vertex set $U$. Denote by $G\cup H$  the graph with vertex set $V(G)\cup V(H)$ and edge set $E(G)\cup E(H)$, and write $G-U$ for the induced subgraph $G[V(G)\setminus U]$, and $G\setminus F$ for the spanning subgraph of $G$ obtained from removing the edge set of $F$. For a path~$P$, we write $\ell(P)$ for its length, which is the number of edges in the path. Where we say $P$ is a path from a vertex set $A$ to a disjoint vertex set $B$, we mean that $P$ has one endvertex in each of $A$ and $B$, and no internal vertices in $A\cup B$.

	\subsection{3-dimensional cube in asymmetric bipartite graphs}
	The 3-dimensional cube $Q_3$ is a particular instance of the structures that we are looking for: two cycles of length $4$ whose vertices are pairwise linked by a path of length $1$. In various places when we wish to expand a set $U$ robustly, we would run into the issue that~$U$ could send most of the edges to some set $W$ that we need to avoid. In such scenarios, we can use the following simple yet useful asymmetric bipartite Tur\'an type result to obtain a 3-dimensional cube.
	
	\begin{prop}\label{prop-Q3}
		Let $d\geq 4$ be an integer and let $G$ be a bipartite graph with partite sets $U$ and $W$ such that $|U|> {|W|\choose 3}$ and every vertex in $U$ has at least $d$ neighbours in $W$. Then, $G$ contains a copy of $Q_3$.
	\end{prop}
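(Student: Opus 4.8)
The structure $Q_3$ is isomorphic to $K_{4,4}$ with a perfect matching removed, so a copy of $Q_3$ in $G$ (necessarily with one colour class inside $U$ and one inside $W$, as $G$ is bipartite and $Q_3$ is connected) is the same thing as a $4$-set $X=\{x_1,x_2,x_3,x_4\}\subseteq W$ together with four \emph{distinct} vertices $u_1,u_2,u_3,u_4\in U$ such that $X\setminus\{x_i\}\subseteq N(u_i)$ for each $i\in[4]$; indeed, any $4\times 4$ bipartite graph of minimum degree at least $3$ is $K_{4,4}$ minus a partial matching, hence contains $Q_3$. So it suffices to produce such an $X$ together with such vertices.

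I would first make two harmless reductions. Deleting edges so that every $u\in U$ has exactly four neighbours in $W$ only shrinks $G$, keeps $|U|>\binom{|W|}{3}$, and a $Q_3$ in the smaller graph is a $Q_3$ in $G$. Next, if some $4$-set $A\subseteq W$ occurs as $N(u)$ for at least four vertices $u\in U$, those four vertices together with $A$ already span $K_{4,4}\supseteq Q_3$; so we may assume every $4$-subset of $W$ is a neighbourhood of at most three vertices. Now count incidences between $U$ and triples of $W$: $\sum_{T\in\binom{W}{3}}\bigl|\{u\in U:T\subseteq N(u)\}\bigr|=\sum_{u\in U}\binom{4}{3}=4|U|>4\binom{|W|}{3}$, so some triple $T_0=\{a,b,c\}$ has at least five common neighbours. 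Let $R\subseteq U$ be the set of these common neighbours, and for $u\in R$ let $y_u$ be the unique element of $N(u)\setminus T_0$. If some $d\in W\setminus T_0$ satisfies $y_u=d$ for at least three vertices $u_1,u_2,u_3\in R$, then, choosing any fourth vertex $u_4\in R$ (which exists since $|R|\ge5$) and setting $X=\{a,b,c,d\}$, the bipartite graph induced on $\{u_1,u_2,u_3,u_4\}$ and $X$ has minimum degree at least $3$: each $u_i$ sees $a,b,c$; each of $a,b,c$ sees all four $u_i$; and $d$ sees $u_1,u_2,u_3$. This yields $Q_3$, and we are done.

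The remaining case — every element of $W\setminus T_0$ is $y_u$ for at most two vertices $u\in R$, so $R$ ``spreads out'' over $W\setminus T_0$ — is the heart of the matter and where I expect the main difficulty. Here the pairs $\{a,b\},\{a,c\},\{b,c\}$, together with the elements $\{y_u:u\in R\}$, generate many new triples, each with at least one common neighbour; the plan would be to use this together with the bound $|U|>\binom{|W|}{3}$ once more, either locating among these new triples one that is \emph{not} spread out (so that the argument of the previous paragraph finishes the job) or running a short Hall's-theorem / system-of-distinct-representatives argument on the auxiliary bipartite graph between $W\setminus T_0$ and the common neighbourhood of a suitable pair (which, by a second double count, is quite large) to assemble the four vertices and the $4$-set directly. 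The reason care is needed is that the elementary pigeonhole only delivers a triple with \emph{five} common neighbours, one short of what the clean argument above wants; it is exactly the ``at most three copies of each $4$-set'' reduction that prevents any iteration of the pigeonhole from cycling.
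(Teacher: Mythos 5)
Your reductions and the first two cases are sound: the identification of $Q_3$ with $K_{4,4}$ minus a perfect matching, the pruning to exactly four neighbours per vertex of $U$, the observation that a $4\times 4$ bipartite graph of minimum degree $3$ contains $Q_3$, the double count producing a triple $T_0$ with at least five common neighbours, and the sub-case in which some $d\in W\setminus T_0$ is the fourth neighbour of at least three vertices of $R$. The problem is the remaining case, which you yourself identify as ``the heart of the matter'': there you give only a plan with two alternative directions, neither of which is carried out, so the proof is incomplete. This is a genuine gap rather than a routine verification. The first direction (finding among the newly generated triples one that is not ``spread out'') stalls at once, because a triple such as $\{a,b,y_u\}$ is only guaranteed one or two common neighbours, nowhere near the five needed to rerun your argument, and nothing rules out that \emph{every} triple with five common neighbours is spread out. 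The second direction (a Hall/SDR argument on an auxiliary bipartite graph) is not specified enough to check; as stated it is an intention, not an argument.

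For comparison, the paper avoids the case analysis entirely by running the SDR idea globally from the start. One greedily builds a partial system of distinct representatives: repeatedly pick an uncoloured triple of $W$ having a common neighbour $v\in U$ not yet used as a colour, and colour that triple with $v$; thus each vertex of $U$ colours at most one triple of its neighbourhood, and distinct triples get distinct colours. Since there are at most $\binom{|W|}{3}$ triples, the hypothesis $|U|>\binom{|W|}{3}$ yields a vertex $v\in U$ never used as a colour, and maximality forces every triple inside $N(v)$ to be coloured by some other vertex. Taking four neighbours $x,y,z,w$ of $v$, the four distinct vertices colouring the four triples of $\{x,y,z,w\}$ each see at least three of $x,y,z,w$, giving $K_{4,4}$ minus a partial matching and hence $Q_3$. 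Note that the pigeonhole is applied to find an \emph{unused vertex of $U$} rather than a popular triple of $W$; this is exactly the Hall-type idea you gesture at in your last paragraph, and adopting it from the outset would replace your entire case analysis with a complete three-line argument.
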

	\begin{proof}
	    We colour triples in $W$ that have common neighbours in $U$ as follows. Consider an uncoloured triple $\{x,y,z\}$ in $W$, if they have a common neighbour $v$ in $U$ that has not yet being used to colour a triple in $N(v)$, then colour $\{x,y,z\}$ with $v$. We write $c_{x,y,z}$ for the vertex in $U$ that is used to colour $\{x,y,z\}$ if it exists. Repeat this until no more triples can be coloured. Note that in this partial colouring, no colour (in $U$) is used more than once.
		
		Since $|U|> {|W|\choose 3}$, there exists $v\in U$ that is not assigned as a colour for any triple. This, together with the maximality of the partial colouring, implies that every triple in $N(v)$ has been coloured by some vertex in $U\setminus\{v\}$. Thus, as $|N(v)|\geq d\geq 4$, we can take four vertices $x,y,z,w\in N(v)$, which together with $c_{x,y,z},c_{x,z,w},c_{y,z,w}, c_{x,y,w}$ induce a $Q_3$ in $G$.
	\end{proof}

	\subsection{Sublinear expander}\label{subsec-robust-expander}
	
	Our proof makes use of the sublinear expander introduced by Koml\'os and Szemer\'edi \cite{KSz96}. We shall use the following extension by Haslegrave, Kim and Liu~\cite{HKL20}.
	
	\begin{defin}\label{def-expander}
		Let $\varepsilon_{1}>0$ and $k\in\bN$. A graph $G$ is an $(\varepsilon_{1},k)$-\textit{expander} if for all $X\subset V(G)$ with $k/2\leq |X|\leq |G|/2$, and any subgraph $F\subseteq G$ with $e(F)\leq d(G)\cdot \eps(|X|)|X|$, we have
		$$
		|N_{G\setminus F}(X)| \geq \varepsilon(|X|)\cdot|X|,
		$$
		where
		$$
		\varepsilon(x)=\varepsilon\left(x, \varepsilon_{1}, k\right)=\left\{\begin{array}{cc}
			0 & \text { if } x<k / 5, \\
			\varepsilon_{1} / \log ^{2}(15 x / k) & \text { if } x \geq k / 5.
		\end{array}\right.
		$$ 
	\end{defin}
	Note that when $x\geq k/2$, $\eps(x)$ is decreasing, which implies that the rate of expansion, $|N_G(B^i_G(X))|/|B^i_G(X)|\geq \eps(|B^i_G(X)|,\eps_1,k)$ guaranteed by the expansion condition decreases as $i$ increases; however, $\eps(x)\cdot x$ is increasing, which means that the lower bound for $|N_G(B_G^i(X))|$ coming from the expansion property increases as $i$ increases.
	
	Such sublinear expansion rate seems rather weak at the first glance, the strength of this notion is that every graph contains one such sublinear expander subgraph with almost the same average degree. We shall use the following version, which is a combination of Lemma~3.2 in \cite{HKL20} and Corollary~2.5 in \cite{LiuMont20}.
	
	\begin{theorem}\label{thm-pass-to-expander}
		There exists some $\eps_1>0$ such that the following holds for every $\eps_2>0$ and $d\in \mathbb{N}$. Every graph $G$ with $d(G)\geq 8d$ has a bipartite $(\eps_1,\eps_2d)$-expander subgraph $H$ with $\delta(H)\geq d$.
	\end{theorem}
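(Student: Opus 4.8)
The statement packages together two by-now standard facts: that every graph contains a bipartite sublinear-expander subgraph whose average degree is a constant fraction of $d(G)$, and that such a subgraph can additionally be taken with minimum degree a constant fraction of its average degree. The plan is to assemble it from three moves, each costing a factor $2$ in density --- which is exactly what the hypothesis $d(G)\ge 8d$ absorbs; here $\eps_1$ will be the small absolute constant produced by the Koml\'os--Szemer\'edi argument. The first move passes to a bipartite subgraph by taking a bipartition $(A,B)$ of $V(G)$ with $e_G(A,B)$ maximal: every vertex then sends at least half of its edges across $(A,B)$ (otherwise moving it to the other side would enlarge the cut), so $G_1:=G[A,B]$ satisfies $d_{G_1}(v)\ge d_G(v)/2$ for every $v$, hence $d(G_1)\ge d(G)/2\ge 4d$; and since every subgraph of a bipartite graph is bipartite, from here on only expansion and degrees need to be tracked.

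\textbf{Expansion.} This is the Koml\'os--Szemer\'edi extraction \cite{KSz96}, in the form of \cite[Lemma~3.2]{HKL20}. Run the following on $H:=G_1$: if $H$ is an $(\eps_1,k)$-expander with $k:=\eps_2 d$, stop; otherwise there is a set $X$ with $k/2\le|X|\le|H|/2$ and a sparse $F\subseteq H$, $e(F)\le d(H)\eps(|X|)|X|$, for which $S:=N_{H\setminus F}(X)$ has $|S|<\eps(|X|)|X|$. Write $R:=V(H)\setminus(X\cup S)$; every $X$--$R$ edge of $H$ lies in $F$, so $e(H)\le e(H[X\cup S])+e(H[S\cup R])+e(F)$, while $|X\cup S|+|S\cup R|=|H|+|S|$. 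Because $\eps(x)=\eps_1/\log^{2}(15x/k)$ decays, both $e(F)/e(H)$ and $|S|/|H|$ are at most a small multiple of $\eps(|X|)$ for every admissible $X$; hence one of $H[X\cup S]$, $H[S\cup R]$ has average degree at least $d(H)$ times a factor that is $1$ minus a quantity controlled by $\eps(|X|)$, and I recurse into that side. Each round strictly decreases $|H|$, so the process terminates at an $(\eps_1,k)$-expander $G_2\subseteq G_1$. The one delicate point is that, along any chain of nested bad sets, $\sum\eps(\cdot)$ is bounded (this is where the $\log^{-2}$ shape and the smallness of $\eps_1$ enter), so the per-round losses telescope to a single constant factor and $d(G_2)\ge d(G_1)/2\ge 2d$. (If $\eps_2 d>|G|$ the expander condition is vacuous and this step is trivial.)

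\textbf{Minimum degree.} Finally I would clean up the minimum degree as in \cite[Corollary~2.5]{LiuMont20}: from $G_2$, repeatedly delete a vertex of degree $<d(G_2)/2$. A standard count shows the process halts at a nonempty graph $H$ with $\delta(H)\ge d(G_2)/2\ge d$, and since only a vanishing fraction of vertices is ever deleted, $H$ remains an $(\eps_1,\eps_2 d)$-expander; alternatively one interleaves this peeling with the splitting step above, so that the output is simultaneously an expander and of large minimum degree. Then $H$ is bipartite, has $\delta(H)\ge d$, and is an $(\eps_1,\eps_2 d)$-expander, which is the claim.

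\textbf{Main obstacle.} The heart of the matter is the expansion step: controlling the total erosion of the average degree over the a~priori linearly many rounds of the splitting procedure. This is precisely why the expansion rate is taken to be $\eps_1/\log^{2}$ rather than a constant, and why $\eps_1$ must be a sufficiently small absolute constant --- then the sum of the per-round losses along any nested chain of witnesses is bounded, and the density drops by at most a constant factor overall. The bipartite and minimum-degree bookkeeping is routine by comparison, once one checks that neither operation damages the expansion of what has already been constructed.
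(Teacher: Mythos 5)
The paper offers no proof of this statement: it is quoted verbatim as ``a combination of Lemma~3.2 in \cite{HKL20} and Corollary~2.5 in \cite{LiuMont20}'', so what you are really doing is reconstructing the Koml\'os--Szemer\'edi extraction. Your three-move outline --- max-cut to pass to a bipartite subgraph at the cost of a factor $2$, the iterative ``find a sparse cut, recurse into the denser side'' extraction, and a minimum-degree clean-up --- is the standard and intended route, and you correctly locate the heart of the matter in controlling the cumulative density loss over the rounds of the extraction.

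Two points, however, are not proofs as written. First, the minimum-degree step: you peel vertices of degree less than $d(G_2)/2$ \emph{after} extracting the expander and assert that $H$ ``remains an $(\eps_1,\eps_2d)$-expander'' because ``only a vanishing fraction of vertices is ever deleted''. Neither half of this is justified: the peeling can consume a constant fraction of $V(G_2)$ (the expansion condition says nothing about sets of size below $k/2=\eps_2d/2$, so an expander may carry many small low-degree appendages), and even deleting few vertices does not obviously preserve Definition~\ref{def-expander}, since the admissible sets $X$ and the neighbourhoods $N_{H\setminus F}(X)$ are all recomputed in the smaller graph. The correct fix is the one you relegate to an ``alternatively'': fold the peeling into the extraction. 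Deleting a vertex of degree at most $d(H)/2$ never decreases $d(H)$, so at every stage of the process one may assume $\delta(H)\ge d(H)/2$ at no cost in density, and the terminal graph is simultaneously an expander and satisfies $\delta(H)\ge d(H)/2\ge d$; this is exactly how \cite{KSz96} and \cite{LiuMont20} phrase the lemma. Second, your claim that ``along any chain of nested bad sets, $\sum\eps(\cdot)$ is bounded'' is not literally how the loss telescopes --- a long chain of bad sets of size close to $k/2$ would each cost a fixed multiple of $\eps_1$ --- rather, \cite{KSz96} charge each loss against the decrease of $\log|H|$ via a potential function trading $d(H)$ against the order of $H$. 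Both issues are repairable and standard, but as stated the argument for the minimum-degree and expansion conclusions has a genuine gap.
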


Thus, when dealing with extremal problems of embeddings in graphs with given density, such as proving Theorem~\ref{thm-thomassen-conj}, we can always pass to a subgraph to enjoy such expansion. One key consequence of the expansion is the so-called short diameter property. That is, we can find short paths robustly between two sufficiently large sets.

\begin{lemma}[\cite{LiuMont20}, Lemma 3.4]\label{lem-short-diam-new} 
	For each $0<\eps_1,\eps_2<1$, there exists $d_0=d_0(\eps_1,\eps_2)$ such that the following holds for each $n\geq d\geq d_0$ and $x\geq 1$. Let $G$ be an $n$-vertex $(\eps_1,\eps_2d)$-expander with $\delta(G)\geq d-1$. Let $A,B\subseteq V(G)$ with $|A|,|B|\geq x$, and  let $W\subseteq V(G)\setminus(A\cup B)$ satisfy $|W|\log^3n\leq 10x$. Then, there is a path from $A$ to $B$ in $G-W$ with length at most $\frac{40}{\eps_1}\log^3n$.
\end{lemma}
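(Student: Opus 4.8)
\emph{(Proof proposal.)} This is the standard ``double breadth-first search'' ball-growing argument for sublinear expanders, the only new feature being that the balls are grown inside $G-W$. Write $k:=\eps_2 d$ for the expansion parameter. Set $X_0:=A$ and $X_{i+1}:=X_i\cup N_{G-W}(X_i)$ for $i\ge 0$, and define $Y_0:=B$, $Y_{i+1}:=Y_i\cup N_{G-W}(Y_i)$ symmetrically. Since $A,B$ are disjoint from $W$, by construction $X_i,Y_i\subseteq V(G)\setminus W$, both sequences are nested increasing, and $|X_i|\ge|A|\ge x$, $|Y_i|\ge|B|\ge x$ for every $i$. The plan is to show that for $T:=\lceil 16\eps_1^{-1}\log^3 n\rceil+1$ one has $|X_T|>n/2$ and $|Y_T|>n/2$. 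Then $|X_T|+|Y_T|>n=|V(G)|$, so $X_T\cap Y_T\ne\emptyset$; picking $w$ in the intersection, there is a walk from $A$ to $w$ of length at most $T$ and a walk from $w$ to $B$ of length at most $T$, both inside $G-W$, and a shortest sub-walk of their concatenation is an $A$–$B$ path in $G-W$ of length at most $2T\le\frac{40}{\eps_1}\log^3 n$ (for $d_0$, hence $n\ge d\ge d_0$, large enough to absorb the ``$+1$'' and the ceiling).

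It remains to prove the growth statement for the $X_i$ (the one for $Y_i$ is identical). \textbf{Initialisation.} If $|A|\ge k/2$ there is nothing to do; otherwise $x\le|A|<k/2$, so $|W|\le 10x/\log^3 n\le d/2$ once $n$ is large, and for any $v\in A$ we get $|X_1|\ge 1+|N_{G-W}(v)|\ge 1+(d-1)-|W|\ge d/2\ge k/2$ (using $\delta(G)\ge d-1$ and $\eps_2\le 1$). So after at most one step we reach, and then stay at, size $\ge k/2$; recall from the remark after Definition~\ref{def-expander} that $t\mapsto\eps(t)t$ is increasing for $t\ge k/2$. \textbf{Growth.} Suppose $k/2\le|X_i|\le n/2$. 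Applying the $(\eps_1,k)$-expander property to $X_i$ with $F=\emptyset$ gives $|N_G(X_i)|\ge\eps(|X_i|)|X_i|$, and since $N_{G-W}(X_i)=N_G(X_i)\setminus W$,
$$
|X_{i+1}|\;\ge\;|X_i|+|N_G(X_i)|-|W|\;\ge\;|X_i|+\eps(|X_i|)|X_i|-|W|.
$$
The crucial estimate is $|W|\le\tfrac12\eps(|X_i|)|X_i|$. Indeed $|X_i|\ge\max\{x,k/2\}$, so by monotonicity of $\eps(t)t$ for $t\ge k/2$ together with the crude bound $\log^2(15t/k)\le 4\log^2 n$ (valid for $k/2\le t\le n$) one gets $\eps(|X_i|)|X_i|\ge\min\{\eps_1 x/(4\log^2 n),\ \eps_1 k/9\}$; since $|W|\le 10x/\log^3 n$ and $x<k/2$ in the second case, this is at least $2|W|$ provided $d_0=d_0(\eps_1,\eps_2)$ is large enough. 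Hence $|X_{i+1}|\ge(1+\tfrac12\eps(|X_i|))|X_i|$.

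\textbf{Counting steps.} Since $\eps(|X_i|)\le\eps_1<1$, the inequality $\log(1+u)\ge u/2$ for $0\le u\le 1$ yields $\log|X_{i+1}|-\log|X_i|\ge\tfrac14\eps(|X_i|)$, and as $\eps$ is decreasing with $|X_i|\le n/2$ we have $\eps(|X_i|)\ge\eps(n/2)\ge\eps_1/(4\log^2 n)$ (again bounding $\log^2(7.5n/k)\le 4\log^2 n$). Therefore either $|X_j|>n/2$ for some $j<T$, in which case $|X_T|>n/2$ by nestedness, or at least $T-1$ of the steps up to $T$ are valid growth steps and $\log|X_T|\ge\log|X_0|+(T-1)\cdot\tfrac{\eps_1}{16\log^2 n}\ge\log n$, whence $|X_T|\ge n>n/2$; either way $|X_T|>n/2$, and likewise $|Y_T|>n/2$, which completes the argument.

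The main technical point — and the only place the precise hypothesis enters — is the inequality $|W|\le\tfrac12\eps(|X_i|)|X_i|$ in the growth step: this is exactly why the statement couples $|W|$ with the lower bound $x\le|X_i|$ and with a factor $\log^3 n$, which must beat the $\log^2$ buried in the sublinear expansion rate $\eps$. I expect this bound (and choosing $d_0(\eps_1,\eps_2)$ large enough for it, for the initialisation step, and for $2T\le\frac{40}{\eps_1}\log^3 n$) to be the only thing requiring care; everything else is routine bookkeeping.
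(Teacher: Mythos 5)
Your argument is correct and is essentially the proof of this lemma in the cited source \cite{LiuMont20} (the paper itself quotes Lemma~\ref{lem-short-diam-new} as a black box rather than reproving it): one grows balls around $A$ and $B$ in $G-W$, uses the sublinear expansion with the hypothesis $|W|\log^3 n\le 10x$ to guarantee $|W|\le\tfrac12\eps(|X_i|)|X_i|$ at every step, and concludes by a counting argument that the two balls must meet within $O(\eps_1^{-1}\log^3 n)$ steps. The constants and the handling of the initial case $|A|<\eps_2 d/2$ all check out.
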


\subsection{Robust expansions of sets}
We collect here some more lemmas for robust expansion of sets in sublinear expanders.

The first one enables us to grow a set $A$ past some given set $X$ as long as $X$ does not interfere with each sphere around $A$ too much. This is formalised as follows.

\begin{defin}\label{def-thin-set}
	For $\lambda>0$ and $k\in\mathbb{N}$, we say that a vertex set $X$ in a graph $G$ is~\emph{$(\lambda,k)$-thin around $A$} if $X\cap A=\varnothing$ and, for each $i\in\mathbb{N}$,
	$$
	|N_G(B_{G-X}^{i-1}(A))\cap X|\leq \lambda \cdot i^k.
	$$
\end{defin}

We will use the following result to get such expansion.

\begin{prop}[\cite{GFKimKimLiu21}, Proposition 2.5]\label{prop-exp-HL}
	Let $0<1/d\ll \varepsilon_1\ll 1/\lambda, 1/k$ and $1\leq r\leq\log n$. Suppose $G$ is an $n$-vertex $(\varepsilon_1,\varepsilon_2d)$-expander with $\delta(G)\ge d$, and $X,Y$ are sets of vertices with $|X|\ge 1$ and $|Y|\leq \frac{1}{4}\varepsilon(|X|)\cdot |X|$. Let $W$ be a $(\lambda,k)$-thin set around $X$ in $G-Y$. Then, for each $1\leq r\leq \log n$, we have
	$$|B^r_{G-W-Y}(X)|\geq\exp(r^{1/4}).$$
\end{prop}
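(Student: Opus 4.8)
The plan is to iterate the expansion of $G$, tracking the nested balls $B_i := B^i_{G-W-Y}(X)$ and proving $|B_i| \ge \exp(i^{1/4})$ for every $1 \le i \le r$; the case $i = r$ is the assertion. The exponent $1/4$ is wasteful — plain expansion already gives growth of order $\exp(\Omega(i^{1/3}))$ — and it is exactly this slack that will absorb the vertices one must discard when passing to $G - W - Y$. Two reductions clear the ground. If $|X| \ge \exp(r^{1/4})$ we are done, so assume $|X| < \exp(r^{1/4}) \le \exp((\log n)^{1/4})$. And if at some step $|B_i| > |G|/2$, then $|B_i| \ge n/2 \ge \exp((\log n)^{1/4}) \ge \exp(r^{1/4})$, and by nestedness we are done; so we may assume $k/2 \le |B_i| \le |G|/2$ whenever we invoke the expander property.

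A single growth step from $X$, using $\delta(G) \ge d$ together with the thin-set bound $|N_{G-Y}(X)\cap W| \le \lambda$ and the smallness $|Y| \le \tfrac14\eps(|X|)|X|$, already forces $|B_1| \ge \max(|X|, d/4)$ — this is where $1/d \ll \eps_1 \ll 1/\lambda, 1/k$ first enters — so by nestedness $|B_i| \ge d/4 > k/2$ for all $i \ge 1$. Now fix $i \ge 1$ and establish the key one-step estimate. Since $N_{G-W-Y}(B_i) = N_G(B_i)\setminus(W\cup Y)$, the expander property with empty deleted edge set gives $|N_G(B_i)| \ge \eps(|B_i|)\,|B_i|$; the $(i+1)$-st instance of $(\lambda,k)$-thinness of $W$ around $X$ in $G - Y$, which governs the set $B_i = B^{(i+1)-1}_{G-W-Y}(X)$, gives $|N_G(B_i)\cap W| \le \lambda(i+1)^k + |Y|$; and $|N_G(B_i)\cap Y| \le |Y|$. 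Since $x \mapsto x\,\eps(x)$ is non-decreasing and $|B_i| \ge \max(|X|, k/2)$, one checks $2|Y| \le \tfrac12\eps(|B_i|)|B_i|$, so
$$|B_{i+1}| \;\ge\; |B_i| + \eps(|B_i|)|B_i| - \lambda(i+1)^k - \tfrac12\eps(|B_i|)|B_i| \;=\; |B_i| + \tfrac12\eps(|B_i|)|B_i| - \lambda(i+1)^k .$$
The drift $\lambda(i+1)^k$ is merely polynomial in $i$, whereas the gain $\eps(|B_i|)|B_i| = \eps_1|B_i|/\log^2(15|B_i|/k)$ is much larger: for small $i$ the drift depends only on $d, k, \lambda$ and is dwarfed by the gain $\ge \eps_1 d/(16\log^2 d)$ (using $|B_i| \ge d/4$); for large $i$, the inductive bound $|B_i| \ge \exp(i^{1/4})$ makes the gain exponential in $i^{1/4}$, beating $\lambda(i+1)^k$ because $i \le \log n$ while $n, d$ are large relative to $1/\eps_1, \lambda, k$. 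Hence $\tfrac14\eps(|B_i|)|B_i| \ge \lambda(i+1)^k$ throughout, and $|B_{i+1}| \ge |B_i|\bigl(1 + \tfrac14\eps(|B_i|)\bigr)$.

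To finish, I would unwind this recursion in the usual way. For $i \le i_0 := (\tfrac15\log d)^4$ we already have $|B_i| \ge d/4 \ge d^{1/5} \ge \exp(i^{1/4})$. For $i > i_0$, set $t_i := \log(15|B_i|/k) > 0$; the recursion and $\log(1+y) \ge y/2$ on $[0,1]$ give $t_{i+1} - t_i \ge \tfrac{\eps_1}{8 t_i^2}$, hence $t_i^3 \ge t_{i_0}^3 + \tfrac{3\eps_1}{8}(i - i_0)$. Since $|B_{i_0}| \ge d/4$ yields $t_{i_0} \ge \tfrac12\log d$ and $i_0$ is large relative to $1/\eps_1$, a short estimate using concavity of $x \mapsto x^{3/4}$ gives $t_i^3 \ge (i^{1/4}+3)^3$ for all $i \ge i_0$, so $\log|B_i| = t_i + \log(k/15) > i^{1/4}$. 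Thus $|B_i| \ge \exp(i^{1/4})$ for all $1 \le i \le r$, and in particular $|B_r| \ge \exp(r^{1/4})$.

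The only delicate point is the one-step estimate: one must keep the expansion gain strictly ahead of the thin-set drift $\lambda(i+1)^k$ uniformly in $i$ and without circular reasoning. This is resolved by the small-$i$/large-$i$ dichotomy above — while $|B_i|$ is of order $d$ the drift is controlled by $d, k, \lambda$ alone, and once $|B_i|$ is exponentially large in $i^{1/4}$ the drift, being polynomial in $i \le \log n$, is negligible — after which the recursion $t_{i+1}^3 \ge t_i^3 + \Omega(\eps_1)$ is routine to iterate.
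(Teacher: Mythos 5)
Your argument is correct and is essentially the standard iterated ball-growth proof of this proposition (which the paper itself imports from~\cite{GFKimKimLiu21} without reproving): apply the expander condition to the nested balls $B_i$, absorb the losses to $W$ via the thinness bound $\lambda(i+1)^k$ and to $Y$ via the monotonicity of $x\eps(x)$, and convert the multiplicative recursion into $t_{i+1}^3\ge t_i^3+\Omega(\eps_1)$, with the slack between the resulting $\exp(\Omega(\eps_1 i)^{1/3})$ growth and the claimed $\exp(i^{1/4})$ covering the drift. The only caution is your overloading of $k$: your $t_i=\log(15|B_i|/k)$ and the claims $t_{i_0}\ge\tfrac12\log d$ and $\log|B_i|=t_i+\log(k/15)>i^{1/4}$ are consistent only if $k$ there denotes the (constant) thinness parameter rather than the expander parameter $\eps_2 d$ appearing in the definition of $\eps(\cdot)$ --- under that reading $\eps(|B_i|)\ge\eps_1/t_i^2$ still holds and everything goes through, but as written the two uses of $k$ should be disentangled.
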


When we are given a large collection of sets in a sublinear expander, we can use the following lemma to find one set within the collection that expands robustly to medium (polylogarithmic) size. We remark that in the original Lemma 3.7 in \cite{LiuMont20}, condition \textbf{A3} below was stated as $C_i$ is $(4,1)$-thin around $A_i$, instead of $(\sqrt{|A_i|},1)$-thin, but the same proof works for this variant.

\begin{lemma}[\cite{LiuMont20}, Lemma~3.7]\label{lem-expand-together}
	For each $0<\eps_1<1$, $0<\eps_2<1/5$ and $k\in \bN$, there exists $d_0=d_0(\eps_1,\eps_2,k)$ such that the following holds for each $n\geq d\geq d_0$.
	Suppose that $G$ is an $n$-vertex bipartite  $(\eps_1,\eps_2 d)$-expander with $\delta(G)\ge d$.
	Let $U\subseteq V(G)$ satisfy $|U|\leq \exp((\log\log n)^2)$. Let $r\geq n^{1/8}$ and $\ell_0=(\log\log n)^{20}$. Suppose $(A_i,B_i,C_i)$, $i\in [r]$, are such that the following hold for each $i\in [r]$.
	\stepcounter{propcounter}
	\begin{enumerate}[label = \emph{\bfseries \Alph{propcounter}\arabic{enumi}}]
		\item $|A_i|\geq d_0$.\label{exptog}
		\item $B_i\cup C_i$ and $A_i$ are disjoint sets in $V(G)\setminus U$, with $|B_i|\leq |A_i|/\log^{10}|A_i|$.\label{exptog2}
		\item $C_i$ is $(\sqrt{|A_i|},1)$-thin around $A_i$ in $G-U-B_i$.\label{exptog3}
		\item Each vertex in $B_{G-U-B_i-C_i}^{\ell_0}(A_i)$ has at most $d/2$ neighbours in $U$.\label{exptog4}
		\item For each $j\in [r]\setminus\{i\}$, $A_i$ and $A_j$ are at least a distance $2\ell_0$ apart in $G-U-B_i-C_i-B_j-C_j$.\label{exptog5}
	\end{enumerate}
	Then, for some $i\in [r]$, $$|B^{\ell_0}_{G-U-B_i-C_i}(A_i)|\geq \log^{k}n.$$
\end{lemma}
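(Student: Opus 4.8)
The plan is to argue by contradiction: assume that $|B^{\ell_0}_{G-U-B_i-C_i}(A_i)|<\log^k n$ for every $i\in[r]$. A few reductions are immediate. If $|A_i|\ge\log^k n$ for some $i$, that $A_i$ already works, so assume $|A_i|<\log^k n$ throughout; then \ref{exptog2} forces $|B_i|<\log^k n$. If $d\ge 8\log^k n$ we are also done: for any $v\in A_i$, condition \ref{exptog4} leaves $v$ at least $d/2$ neighbours off $U$, and at most $|B_i|+\sqrt{|A_i|}+|A_i|<3\log^k n$ of them fall in $B_i\cup C_i\cup A_i$ (using \ref{exptog3} for the $C_i$ part), so the first shell of $A_i$ in $G-U-B_i-C_i$ already exceeds $\log^k n$. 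Hence $d<8\log^k n$, so every ball $B^j_{G-U-B_i-C_i}(A_i)$ with $j\le\ell_0$ has size below $\log^k n$, and for such sets the expansion rate from Definition~\ref{def-expander} is at least $\eps(x)\ge\eps_1/(2k\log\log n)^2$.

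The crux is that the failure of $A_i$ to reach size $\log^k n$ within $\ell_0$ BFS rounds cannot be blamed on $B_i$ or $C_i$, only on $U$. Indeed $B_i$ is negligible relative to $A_i$ by \ref{exptog2} and $C_i$ is thin around $A_i$ by \ref{exptog3}, so the robust expansion of $A_i$ inside $G-B_i-C_i$ (after a harmless bootstrapping of $A_i$ to size $\ge\eps_2 d$ using $\delta(G)\ge d$ and \ref{exptog4}, e.g.\ via Proposition~\ref{prop-exp-HL}) gives that $R_i:=B^{\ell_0}_{G-B_i-C_i}(A_i)$, in which $U$ is \emph{not} deleted, has size at least $\exp(\ell_0^{1/4})=\exp((\log\log n)^5)\gg\log^k n$. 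So deleting $U$ is what kills the growth. Following, for each $v\in R_i$, a path of length $\le\ell_0$ from $A_i$ to $v$ in $G-B_i-C_i$ up to its first vertex in $U$ yields
$$R_i\subseteq B^{\ell_0}_{G-U-B_i-C_i}(A_i)\cup B^{\ell_0}_{G-B_i-C_i}(U_i),\qquad U_i:=U\cap R_i,$$
so $|B^{\ell_0}_{G-B_i-C_i}(U_i)|\ge\exp((\log\log n)^5)-\log^k n$; as $|U_i|\le|U|\le\exp((\log\log n)^2)$, some single $u_i\in U_i$ has $|B^{\ell_0}_{G-B_i-C_i}(u_i)|\ge\exp(\tfrac12(\log\log n)^5)$, and $u_i$ lies within distance $\ell_0$ of $A_i$ in $G-B_i-C_i$.

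Now the vast number of triples is used: since every $u_i$ lies in the small set $U$, there is a fixed $u^\ast\in U$ and a set $I\subseteq[r]$ with $|I|\ge r/|U|\ge n^{1/9}$ on which $u_i=u^\ast$. For each $i\in I$ the vertex $u^\ast$ is within distance $\ell_0$ of $A_i$ in $G-B_i-C_i$ and carries a ball $B^{\ell_0}_{G-B_i-C_i}(u^\ast)$ of size $\ge\exp(\tfrac12(\log\log n)^5)$. The remaining, and most technical, step is to turn this into a contradiction, and this is where \ref{exptog5} enters: because the $A_i$ with $i\in I$ are pairwise at distance $\ge 2\ell_0$ even in $G-U-B_i-C_i-B_j-C_j$, one can carve out pairwise-disjoint pieces $\widehat R_i$ of these balls that are still almost as large, and then $Z:=\bigcup_{i\in I}\widehat R_i$ is huge while its outer boundary $N_G(Z)$ is forced into $U$ together with the parts of the $B_i$ and $C_i$ touching the $\widehat R_i$ — by \ref{exptog2} and the thinness in \ref{exptog3} each such part has only $\mathrm{polylog}(n)$ vertices — so $|N_G(Z)|\ll\eps(|Z|)|Z|$, contradicting the robust expansion of $Z$.

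I expect the genuine difficulty to lie not in any one idea above but in the bookkeeping: sublinear expansion is not additive, so one must keep every ball polynomial (indeed subpolynomial) in size so that $\eps(\cdot)$ evaluated on the enormous union $Z$ still dominates the $O(|U|)$ loss; the three deleted sets $U$, $B_i$, $C_i$ must be untangled at each stage, in particular when bootstrapping $A_i$ and when replacing $C_i$ by the $O(\ell_0^2)$ vertices it contributes through thinness; and making the pieces $\widehat R_i$ simultaneously disjoint and large is exactly what \ref{exptog5} is engineered to allow.
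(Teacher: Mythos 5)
First, a point of reference: the paper does not actually prove this lemma --- it is imported as a black box from Lemma~3.7 of \cite{LiuMont20}, with only the remark that the change from $(4,1)$-thin to $(\sqrt{|A_i|},1)$-thin in \ref{exptog3} goes through the same proof. So your argument can only be judged on its own terms. Your opening reductions (to $|A_i|<\log^k n$ and $d<8\log^k n$) are fine, and the ``blame $U$'' step --- expand $A_i$ in $G-B_i-C_i$ \emph{without} deleting $U$, get a ball of size $\exp((\log\log n)^5)\gg\log^k n$, and conclude that some $u_i\in U$ within distance $\ell_0$ of $A_i$ carries a large ball --- is plausible modulo the technical caveat that Proposition~\ref{prop-exp-HL} as stated requires the thinness parameter $\lambda$ to be a constant with $\eps_1\ll 1/\lambda$, whereas \ref{exptog3} only gives $\lambda=\sqrt{|A_i|}$.

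The genuine gap is in your final step. After pigeonholing, the objects you want to make disjoint are the balls $B^{\ell_0}_{G-B_i-C_i}(u^\ast)$ for $i\in I$ --- but these are all centred at the \emph{same} vertex $u^\ast$. They pairwise intersect (each contains $u^\ast$ and essentially all of $B^{\ell_0}_{G-\bigcup_j(B_j\cup C_j)}(u^\ast)$), their union sits inside the single ball $B^{\ell_0}_G(u^\ast)$ whose size has nothing to do with $|I|$, and so no carving into $n^{1/9}$ pairwise-disjoint large pieces is possible. Condition \ref{exptog5} cannot rescue this: it bounds distances between $A_i$ and $A_j$ in graphs from which $U$ has been \emph{deleted}, so it constrains nothing about the geometry around a vertex $u^\ast\in U$; indeed, $u^\ast$ lying within distance $\ell_0$ of every $A_i$, $i\in I$, is perfectly consistent with \ref{exptog5} precisely because $u^\ast\in U$. (For the same reason, the balls $B^{\ell_0}_{G-B_i-C_i}(A_i)$ with $U$ retained are not pairwise disjoint either.) A second, independent problem is the claimed contradiction itself: the outer boundary $N_G(Z)$ of a union of radius-$\ell_0$ balls is \emph{not} forced into $U\cup\bigcup_i(B_i\cup C_i)$ --- it contains the $(\ell_0+1)$-st spheres, which are only absorbed by the deleted sets if each ball has stopped growing, and that is essentially the statement you are trying to prove. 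The disjointness that \ref{exptog5} is actually engineered to give is that of the balls $B^{\ell_0}_{G-U-B_i-C_i}(A_i)$ themselves; the argument has to grow these $r$ disjoint balls simultaneously and show that a blocked expansion step costs so much interaction with the small set $U$ that it cannot occur for all $n^{1/8}$ indices, rather than first passing to balls centred inside $U$.
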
 

Lastly, we need the following result to find a linear size vertex set with polylogarithmic diameter in $G$ while avoiding an arbitrary set of size $o(n/\log^2n)$. 

\begin{lemma}[\cite{LiuMont20}, Lemma~3.12]\label{lem-find-large-ball}
	Let $0<1/d\ll\eps_1,\eps_2<1$ and let $G$ be an $n$-vertex bipartite $(\eps_1,\eps_2d)$-expander with $\delta(G)\geq d$. For any $W\subseteq V(G)$ with $|W|\leq \eps_1n/100\log^2n$, there is a set $B\subseteq G-W$ with size at least $n/25$ and diameter at most $200\eps_1^{-1}\log^3n$.
\end{lemma}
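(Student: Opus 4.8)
The plan is to exhibit a vertex $v$ and a radius $\rho := 100\eps_1^{-1}\log^3 n$ for which the ball $B := B^{\rho}_{G-W}(v)$ (with $v\notin W$) has at least $n/25$ vertices; this immediately gives the lemma, since for $u\in B$ any geodesic of $G-W$ from $u$ to $v$ has length at most $\rho$ and stays inside $B$ (each of its vertices is still within distance $\rho$ of $v$), so $\operatorname{dist}_{G[B]}(u,v)\le\rho$, whence $\operatorname{diam}(G[B])\le 2\rho = 200\eps_1^{-1}\log^3 n$. (We may assume $n$ is large, otherwise the claim is trivial.) One half of the work is easy: writing $\eps=\eps(\cdot,\eps_1,\eps_2 d)$, if $k/2\le|X|\le n/2$ and $|X|\ge n/50$ then the expansion property with $F=\varnothing$ gives $|N_G(X)|\ge\eps(|X|)|X|\ge\eps_1|X|/\log^2 n$, and since $|W|\le\eps_1 n/(100\log^2 n)\le\tfrac12\eps_1|X|/\log^2 n$ we get $|N_{G-W}(X)|\ge|N_G(X)|-|W|\ge\tfrac12\eps_1|X|/\log^2 n$; iterating, any ball in $G-W$ that ever attains size $n/50$ is pushed past $n/25$ within a further $O(\eps_1^{-1}\log^2 n)$ steps (and if it overshoots $n/2$ we are done at once). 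So it suffices to find $v$ whose ball $B^i_{G-W}(v)$ reaches size $n/50$ within $\tfrac12\rho$ steps.

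The real content is the regime where the ball is still smaller than about $n/\log^2 n$, where $|W|$ is comparable to the per-step expansion $\eps(x)x$ and a badly placed $W$ could in principle stall the growth. Here I would argue by contradiction, assuming $|B^{\rho}_{G-W}(v)|<n/25$ for every $v\in V(G)\setminus W$. The key structural point is that $W$ cannot cut off a linear portion of $G$: distinct components of $G-W$ have no edge between them even in $G$, so if $\mathcal Z$ denotes the union of the components of $G-W$ of size at most $n/4$ then $N_G(\mathcal Z)\subseteq W$; were $|\mathcal Z|\ge n/4$, choosing a sub-union $\mathcal Z'$ with $n/8<|\mathcal Z'|\le n/2$ would give $\eps_1 n/(8\log^2 n)\le\eps(|\mathcal Z'|)|\mathcal Z'|\le|N_G(\mathcal Z')|\le|W|\le\eps_1 n/(100\log^2 n)$ (using that $x\mapsto\eps(x)x$ is increasing), a contradiction. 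Hence $|\mathcal Z|<n/4$, so $G-W$ has a ``giant'' component $C^*$ with $|C^*|>n/6$, and by the standing assumption $\operatorname{diam}(G[C^*])>\rho$. Moreover the same estimate, applied inside $C^*$, shows that a ball there cannot \emph{permanently} stall: once $|B^j_{G-W}(u)|\ge n/50$ it grows by at least $\eps_1 n/(100\log^2 n)$ per step and so reaches $n/25$ within $O(\eps_1^{-1}\log^2 n)$ further steps.

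It remains to rule out balls that grow but \emph{too slowly} to reach $n/50$ within the radius budget. I would track the BFS layers $L_0,L_1,\dots$ of a center $u$ in $G-W$ and the partial balls $S_j=\bigcup_{i\le j}L_i$: for $j$ with $n/50\le|S_j|\le n/2$ the computation above gives $|L_{j+1}|=|N_{G-W}(S_j)|\ge\eps(|S_j|)|S_j|-|W|>0$, so the only way to remain below $n/50$ for $\rho$ layers is to stay genuinely small throughout; and the centers whose ball stays small are controlled by a covering argument — a maximal $\rho$-separated set of centers in $G-W$ has the property that the radius-$\rho$ balls around them cover $V(G)\setminus W$, so by assumption there are at least $25$ of them, while their radius-$\tfrac12\rho$ balls are disjoint and (by the no-permanent-stall point) each of size $<n/50$; fed back against the fact that $\eps(x)x$ overtakes $|W|$ already at $x\gtrsim n/\log^2 n$, this yields the required contradiction, producing the desired ball $B$. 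I expect this last regime to be the genuine obstacle: a set $W$ as large as $\eps_1 n/(100\log^2 n)$ is \emph{not} negligible against the expansion until the ball is a constant fraction of $n$, and the only leverage is that $\eps(x)x$ is increasing and beats $|W|$ at linear scale, so any stall must sit inside a vertex set whose entire outer boundary lies in the small set $W$, which an expander forbids — turning this into a bound on the \emph{radius} needed (not merely on the eventual component size) is the delicate part.
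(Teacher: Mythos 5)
Your reductions at the start are fine: the diameter-from-radius observation, the fact that a ball of size at least $n/50$ gains at least $\tfrac12\eps(|X|)|X|\ge \eps_1 n/(100\log^2 n)$ vertices per step and so reaches $n/25$ quickly, and the argument that no union of components of $G-W$ of intermediate size can have its whole boundary inside $W$. But the entire content of the lemma lies in the regime you leave open, namely growing a ball from constant size up to size $\Theta(n)$, and there your proposal has a genuine gap. The leverage you invoke to close it is quantitatively false: at $x=n/\log^2 n$ one has $\eps(x)x=\eps_1 x/\log^2(15x/k)\approx \eps_1 n/\log^4 n$, which is \emph{smaller} than $|W|\approx\eps_1 n/(100\log^2 n)$ by a factor of $\log^2 n$; the threshold at which $\eps(x)x$ overtakes $|W|$ is $x=\Omega(n)$ (around $n/100$), i.e.\ exactly the scale you are trying to reach, as you yourself correctly say two lines later ("beats $|W|$ at linear scale"). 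Consequently, for all sub-linear ball sizes the bound $|N_{G-W}(X)|\ge\eps(|X|)|X|-|W|$ is vacuous, and the covering argument as described produces no contradiction: having at least $25$ centres whose disjoint radius-$\rho/2$ balls each have size below $n/50$ is perfectly consistent --- nothing you have shown prevents \emph{every} ball of radius $\rho$ from being tiny (connectivity of the giant component gives no diameter control; think of a long path of medium-sized cliques). A smaller instance of the same gap occurs at the very first step: since $|W|$ may vastly exceed $d$, a vertex $v\notin W$ can have all of its $\ge d$ neighbours inside $W$, and $\eps(x)=0$ for $x<\eps_2 d/5$, so even reaching the size at which Definition~\ref{def-expander} applies requires an argument.

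The ingredient your proposal never uses, and which is what makes the sub-linear regime tractable, is the robustness of Definition~\ref{def-expander} to deleting a subgraph $F$ with $e(F)\le d(G)\eps(|X|)|X|$ edges. Taking $F$ to be the set of edges between $X$ and $W$ (for $X$ disjoint from $W$) gives $N_{G\setminus F}(X)=N_{G-W}(X)$, so $W$ can stall the growth of a ball $X$ only if $X$ sends more than $d(G)\eps(|X|)|X|\ge d\,\eps(|X|)|X|$ edges into $W$; a purely vertex-counting comparison of $\eps(|X|)|X|$ against $|W|$ cannot prove the lemma at sub-linear scales, and this edge-robustness (or some equivalent device, as in the cited proof of \cite[Lemma~3.12]{LiuMont20}) is what you would need to control the ``slow growth'' centres. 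Note also that this lemma is quoted here from \cite{LiuMont20} and not reproved in the paper, so there is no in-paper proof to match; but as written your argument does not establish the statement.
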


\subsection{Krakens}

A basic structure we often use is a large set with small radius, defined as follows.
	
	\begin{defin}\label{def-expansion}
		Given a vertex $v$ in a graph $F$, $F$ is a \emph{$(D,m)$-expansion of $v$} if $|F|=D$ and every vertex of $F$ is a distance at most $m$ from $v$.
	\end{defin}
	
	Expansions around a vertex can be trimmed to a smaller size.
	\begin{prop}[\cite{LiuMont20}, Proposition 3.10]\label{prop:trimming} Let $D,m\in \bN$ and $1\leq D'\leq D$. Then, any graph $F$ which is a $(D,m)$-expansion of $v$ contains a subgraph which is a $(D',m)$-expansion of $v$.
\end{prop}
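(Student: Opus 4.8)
The plan is to reduce to a breadth-first search tree and then trim leaves one at a time, exploiting the fact that deleting a leaf of a rooted tree never increases the distance from the root to any surviving vertex.

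First I would note that $F$ is connected: by hypothesis every vertex of $F$ lies within distance $m$ of $v$, hence within finite distance of $v$, hence in the same component. Running breadth-first search from $v$ in $F$ produces a spanning tree $T$ rooted at $v$ with $|V(T)|=|V(F)|=D$, and with the standard property that $d_T(v,u)=d_F(v,u)\le m$ for every $u\in V(F)$. So $T$ is already a $(D,m)$-expansion of $v$ that happens to be a tree.

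Next I would iteratively remove leaves. While the current tree $T'$ has more than $D'$ vertices, it has at least two vertices, hence at least two leaves, so it has a leaf $u\ne v$; delete $u$. The result is again a tree containing $v$, and for any two surviving vertices $x,y$ the unique path between $x$ and $y$ in $T'$ does not pass through $u$ (a leaf is an internal vertex of no path except those ending at it), so that path, and in particular every distance from $v$, is unchanged for all surviving vertices. After exactly $D-D'$ deletions we obtain a tree $T^{*}$ on $D'$ vertices, containing $v$, in which every vertex is at distance at most $m$ from $v$. Since $T^{*}$ is a subgraph of $F$, it is the desired $(D',m)$-expansion of $v$.

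There is essentially no real obstacle beyond bookkeeping; the one point that genuinely needs care is that one must trim \emph{leaves of a fixed BFS tree}, not arbitrary vertices of $F$: deleting a non-leaf of $T$, or deleting a vertex of $F$ lying outside a fixed spanning tree, could create longer detours and break the radius bound $m$. Restricting to a BFS spanning tree and always removing a leaf distinct from $v$ avoids this entirely.
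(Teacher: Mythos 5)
Your proof is correct, and it is the standard argument for this statement (which the paper itself only cites from \cite{LiuMont20} without reproducing a proof): pass to a BFS spanning tree rooted at $v$, which preserves all distances from $v$, and repeatedly delete leaves distinct from $v$, which cannot increase any surviving distance. Your closing remark correctly identifies the one point needing care, namely that one must delete leaves of a fixed spanning tree rather than arbitrary vertices of $F$.
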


\begin{defin}\label{def-kraken}
	For $k,s,t\in\mathbb{N}$, a \emph{$(k,s,t)$-kraken} is a graph that consists of a cycle $C$ with vertices $v_1,\dots,v_k$, vertices $u_1,\ldots, u_k$ out of $V(C)$, and subgraphs $F_{j}$ and $P_{j}$, $j\in[k]$, such that
	\begin{itemize}\itemsep=0pt
		\item $\{F_{j}:j\in[k]\}$ is a collection of sets disjoint from each other and from $V(C)$, and each~$F_j$ is a $(t,s)$-expansion of $u_j$. We call each $F_j$ a \emph{leg} and $u_j$ its \emph{end}.
		
		\item $\{P_{j}:j\in[k]\}$ is a collection of pairwise disjoint paths, and each $P_j$ is a $v_j,u_j$-path of length at most $10s$ with internal vertices disjoint from $V(C)\cup(\cup_{i\in[k]} V(F_i))$. 
	\end{itemize}
\end{defin}


\begin{figure}[h]
	\centering
	\scalebox{0.8}{	
		\begin{tikzpicture}	
			\draw[black,scale=0.7] (0,0) circle (1cm);
			\node[inner sep= 1pt](t1) at (0,0){\small$C$};
			\node[inner sep= 1pt](a1) at (0.5,0.5)[circle,fill]{};
			\node[inner sep= 1pt](a7) at (0.7,0.15)[circle,fill]{};
			\node[inner sep= 1pt](a2) at (0.2,0.68)[circle,fill]{};
			\node[inner sep= 1pt](t2) at (0.2,0.4){\small$v_j$};
			\node[inner sep= 1pt](t2) at (1.2,3.1){\small$u_j$};
			\node[inner sep= 1pt](t3) at (1,2){\small$P_j$};
			\node[inner sep= 1pt](t4) at (1.9,3){\small$F_j$};
			\node[inner sep= 1pt](a8) at (-0.2,0.68)[circle,fill]{};
			\node[inner sep= 1pt](a3) at (-0.5,0.5)[circle,fill]{};
			\node[inner sep= 1pt](a9) at (-0.7,0.1)[circle,fill]{};
			\node[inner sep= 1pt](a4) at (-0.65,-0.3)[circle,fill]{};
			\node[inner sep= 1pt](a10) at (-0.45,-0.55)[circle,fill]{};
			\node[inner sep= 1pt](a5) at (-0.1,-0.7)[circle,fill]{};
			\node[inner sep= 1pt](a11) at (0.3,-0.65)[circle,fill]{};
			\node[inner sep= 1pt](a6) at (0.68,-0.2)[circle,fill]{};
			\node[inner sep= 1pt](a12) at (0.52,-0.47)[circle,fill]{};
			\draw[lightseagreen] (3.5,0.5) circle (0.5cm);
			\draw[lightseagreen] (3,2) circle (0.5cm);
			\draw[decorate, decoration=snake, segment length=5mm,darkblue] (a7) -- (3,0.4);
			\draw[decorate, decoration=snake, segment length=6mm,darkblue] (a1) -- (2.55,1.8);
			\draw[lightseagreen] (1.2,3.4) circle (0.5cm);
			\draw[lightseagreen] (-1,3.5) circle (0.5cm);
			\draw[decorate, decoration=snake, segment length=5mm,darkblue] (a2) -- (1,2.95);
			\node[inner sep= 1pt](a15) at (1,2.95)[circle,fill]{};
			\draw[decorate, decoration=snake, segment length=5mm,darkblue] (a8) -- (-0.8,3.05);
			\draw[lightseagreen] (-3.5,0.6) circle (0.5cm);
			\draw[lightseagreen] (-3.1,2) circle (0.5cm);
			\draw[decorate, decoration=snake, segment length=5mm,darkblue] (a9) -- (-3,0.45);
			\draw[decorate, decoration=snake, segment length=4mm,darkblue] (a3) -- (-2.7,1.7);
			\draw[lightseagreen] (-3.5,-0.6) circle (0.5cm);
			\draw[lightseagreen] (-3.1,-2) circle (0.5cm);
			\draw[decorate, decoration=snake, segment length=5mm,darkblue] (a4) -- (-3,-0.45);
			\draw[decorate, decoration=snake, segment length=4mm,darkblue] (a10) -- (-2.7,-1.7);
			\draw[lightseagreen] (1.2,-3.4) circle (0.5cm);
			\draw[lightseagreen] (-1.3,-3.5) circle (0.5cm);
			\draw[decorate, decoration=snake, segment length=5mm,darkblue] (a11) -- (1,-2.95);
			\draw[decorate, decoration=snake, segment length=5mm,darkblue] (a5) -- (-1.1,-3.05);
			\draw[lightseagreen] (3.5,-1) circle (0.5cm);
			\draw[lightseagreen] (3.1,-2.5) circle (0.5cm);
			\draw[decorate, decoration=snake, segment length=5mm,darkblue] (a6) -- (3.03,-0.85);
			\draw[decorate, decoration=snake, segment length=4.5mm,darkblue] (a12) -- (2.7,-2.2);
		\end{tikzpicture}
	}
	\caption{A $(12,s,t)$-kraken.\label{pic-kraken}}
\end{figure}
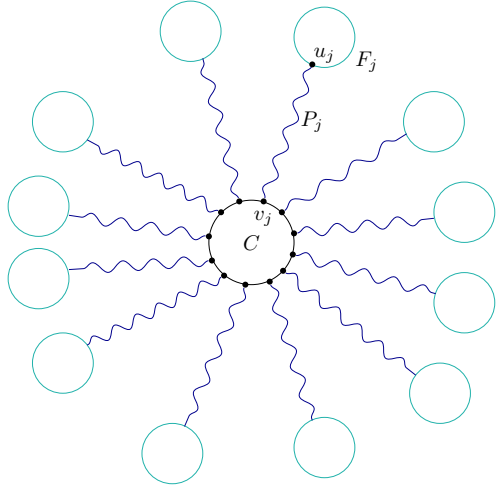

We usually write a kraken as a tuple $(C, u_j, F_{j}, P_{j})$, $j\in[k]$, see Figure~\ref{pic-kraken}.

The following result guarantees a large kraken in a sublinear expander.

\begin{lemma}[\cite{GFKimKimLiu21}, Lemma~3.2]\label{lem-kraken-from-kraken}
	Let $0<1/d\ll \eps_1,\eps_2,1/b<1$. Let $G$ be an $n$-vertex $(\eps_1,\eps_2d)$-expander with $\delta(G)\geq d$. 
	Let $m=200\eps_1^{-1}\log^3n$. Then, there exists a $(k,m,\log^{b}n)$-kraken $(C, u_j, F_j, P_j)$, $j\in[k]$, in $G$ for some $k\le \log n$.
		
\end{lemma}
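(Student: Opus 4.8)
The plan is to first produce a short cycle of length $k\le\log n$ and then attach a large leg to each of its vertices. For the cycle, note that since $\delta(G)\ge d$ for a large constant $d$, a breadth-first-search (Moore-bound) argument applies: running BFS from any vertex, the ball of radius $\lceil\log_{d-1}n\rceil$ cannot induce a forest, for otherwise it would have at least $(d-1)^{\lceil\log_{d-1}n\rceil}\ge n$ vertices and $G$ would be acyclic. Hence $G$ has a cycle of length at most $2\log_{d-1}n+1\le\log n$; take $C=v_1\dots v_k$ to be a shortest cycle, so that it is chordless, and let $C$ be the cycle of the kraken.

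The main work is the separation step: to construct pairwise-disjoint vertex sets $D_1,\dots,D_k$, each meeting $V(C)$ only in the respective $v_j$, with each $D_j$ a $(\log^{3b}n,\ell_0)$-expansion of $v_j$ for $\ell_0=(\log\log n)^{20}$. Since $C$ is chordless and, in the principal case, has girth at least $5$, the private neighbourhoods $S_j:=N_G(v_j)\setminus V(C)$ are pairwise disjoint and of size at least $d-2\ge\eps_2 d$, so $\{v_j\}\cup S_j$ form disjoint seeds; I would grow these in parallel, one breadth-first shell at a time, each step forbidding the current contents of the other growing sets, and control the growth using the robust-expansion tools — Proposition~\ref{prop-exp-HL} together with the $(\lambda,k)$-thin-set formalism of Definition~\ref{def-thin-set}, and Proposition~\ref{prop-Q3} invoked when a growing set sends almost all of its boundary edges into the forbidden region — so that each $D_j$ keeps expanding at a definite rate until it reaches size $\log^{3b}n$ (in particular each core $B^{\ell_0/2}_{D_j}(v_j)$ will by then have super-polylogarithmic size). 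This step is delicate precisely because the expansion is only sublinear and hence not additive: the union of the $k-1$ other (polylogarithmic) forbidden sets is of the same order as any single $D_j$ and can a priori stall its growth, and in addition the host graph may have very large maximum degree, so that even a small forbidden vertex set may absorb essentially all the boundary edges of a medium-sized set. The degenerate cases $g\le4$, where the seeds need not be disjoint, require a separate argument (with the seeds obtained by splitting the overlapping neighbourhoods of the few cycle vertices).

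Granting the $D_j$, I would finish by building the $k$ legs and pendant paths one at a time. For the $j$-th, Lemma~\ref{lem-find-large-ball} provides a fresh set $B_j$ with $|B_j|\ge n/25$ and diameter at most $m:=200\eps_1^{-1}\log^3n$ that is disjoint from $V(C)$, from the other $D_i$, and from the previously built bundles; since the core $B^{\ell_0/2}_{D_j}(v_j)$ is super-polylogarithmic it dwarfs all this, so Lemma~\ref{lem-short-diam-new} joins it to $B_j$ by a path of length at most $40\eps_1^{-1}\log^3n$ avoiding that material. Prepending the length-at-most-$\ell_0/2$ subpath inside $D_j$ from $v_j$ to the start of this path, and truncating at the first vertex $u_j$ it meets in $B_j$, yields a $v_j,u_j$-path $P_j$ of length at most $\ell_0/2+40\eps_1^{-1}\log^3n+1\le10m$; I route it outside the reserved cores of the other $D_i$, so that it does not obstruct later bundles. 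Finally $G[B_j]$, being connected of diameter at most $m$, is a $(|B_j|,m)$-expansion of $u_j$, so by Proposition~\ref{prop:trimming} it contains a $(\log^bn,m)$-expansion $F_j$ of $u_j$; since $P_j$ meets $B_j$ only at $u_j$, the sets $F_j$ are automatically disjoint from all the $P_i$ and from $V(C)$. The tuple $(C,u_j,F_j,P_j)_{j\in[k]}$ is then a $(k,m,\log^bn)$-kraken, as required. As indicated, the real obstacle is the separation step — arranging, with only sublinear (non-additive) expansion and possibly huge maximum degree, for the $k$ closely packed cycle vertices to simultaneously acquire pairwise-disjoint medium-sized expansions.
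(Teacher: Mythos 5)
This lemma is not proved in the paper at all: it is imported verbatim as Lemma~3.2 of~\cite{GFKimKimLiu21} and used as a black box (the authors explicitly describe the original argument as ``involved''). So the comparison can only be against what your proposal would need to deliver on its own terms, and there it falls short at exactly the point you yourself flag. Your outer shell is plausible and standard for this circle of papers --- a Moore-bound shortest cycle $C=v_1\dots v_k$ with $k\le\log n$, then disjoint medium expansions $D_j$ around the $v_j$, then linking each $D_j$ to a fresh large ball from Lemma~\ref{lem-find-large-ball} via Lemma~\ref{lem-short-diam-new} and trimming with Proposition~\ref{prop:trimming} --- but the separation step, which is the entire content of the lemma, is asserted rather than proved. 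The tools you invoke do not give it: Proposition~\ref{prop-exp-HL} expands a single set $X$ past a forbidden set only when that set is either tiny relative to $X$ (the condition $|Y|\le\tfrac14\eps(|X|)|X|$) or $(\lambda,k)$-thin around $X$, and the $k-1$ other growing sets $D_i$ are neither --- they are of the \emph{same} order as $D_j$ and sit at distance $O(\log n)$ from it, so nothing prevents one of them from swallowing essentially the whole boundary of another. Likewise Lemma~\ref{lem-expand-together}, the natural ``parallel growth'' tool in this framework, is deliberately weak: it guarantees that \emph{some one} of many seeds expands, never that all of them do simultaneously, which is what you need here. Saying you would ``control the growth using the robust-expansion tools \dots\ so that each $D_j$ keeps expanding at a definite rate'' is naming the desired conclusion, not an argument; overcoming precisely this non-additivity (together with the large-degree vertices, which you also defer) is what the cited proof spends its effort on.

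Two smaller points, fixable but worth recording. First, when you apply Lemma~\ref{lem-short-diam-new} to route $P_j$ while avoiding the reserved cores of the other $D_i$, the hypothesis $|W|\log^3n\le 10x$ forces the set $A$ you start from to be at least $\log^3 n$ times larger than the union of all $k-1$ avoided cores; if all cores have comparable size this fails by a polylogarithmic factor, so you must grade the radii (take $A=B^{r}_{D_j}(v_j)$ while only protecting the much smaller $B^{r'}_{D_i}(v_i)$ with $r'\ll r$), and this needs to be set up consistently across all $j$. Second, the Moore-bound sentence as written (``$G$ would be acyclic'') is garbled --- the correct contradiction is that a tree-like ball of radius $\lceil\log_{d-1}n\rceil$ would exceed $n$ vertices --- though the intended conclusion, girth at most $\log n$, is of course correct for large $d$.
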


\subsection{Adjusters}
An important tool we need in our proof is a recent lemma of Liu and Montgomery~\cite{LiuMont20} (Lemma~\ref{lem-finalconnect}), which robustly finds paths of specific lengths between a given pair of vertices in a sublinear expander with some mild conditions. 

Before stating the lemma, let us briefly introduce the key object called \emph{adjuster}, involved for its proof. The basic structure is an even cycle~$C$, together with two disjoint large connected subgraphs $F_1,F_2$ attached to two almost-antipodal vertices $v_1,v_2$ on the cycle $C$. If~$C$ has length $2\ell$, for some $\ell\leq\log n$, there are two $v_1,v_2$-paths, one of length $\ell+1$ and the other with length $\ell-1$. The subgraphs $F_1$, $F_2$ are set to be comfortably larger than the size of $C$, so that they can be connected by a short path while avoiding $C$. The idea is to link many such structures sequentially to form an \emph{adjuster}, so that we can use it to find paths of many different lengths by varying the length of the path we take around each cycle, see Figure~\ref{pic-adjuster}.

	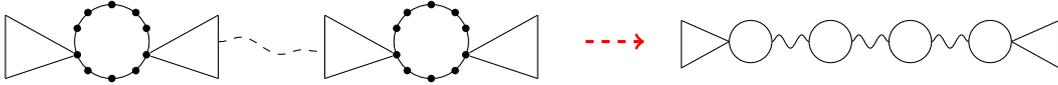
\begin{figure}[h]
		\centering
	\begin{tikzpicture}[scale=0.7]
	\draw[black] (0,0) circle (0.7cm);
	\node[inner sep= 1pt](a1) at (0.45,0.55)[circle,fill]{};
	\node[inner sep= 1pt](a2) at (0,0.7)[circle,fill]{};
	\node[inner sep= 1pt](a3) at (-0.45,0.55)[circle,fill]{};
	\node[inner sep= 1pt](a4) at (-0.45,-0.55)[circle,fill]{};
	\node[inner sep= 1pt](a5) at (0,-0.7)[circle,fill]{};
	\node[inner sep= 1pt](a6) at (0.45,-0.55)[circle,fill]{};
	\node[inner sep= 1pt](a7) at (0.65,0.25)[circle,fill]{};
	\node[inner sep= 1pt](a8) at (-0.65,0.25)[circle,fill]{};
	\node[inner sep= 1pt](a12) at (-0.65,-0.25)[circle,fill]{};
	\node[inner sep= 1pt](a13) at (0.65,-0.25)[circle,fill]{};
	\draw (a12)--(-2,0.5);
	\draw (a12)--(-2,-0.7);
	\draw (-2,0.5)--(-2,-0.7);
	\draw (a13)--(2,0.5);
	\draw (a13)--(2,-0.7);
	\draw (2,0.5)--(2,-0.7);
	\draw[black] (6,0) circle (0.7cm);
	\node[inner sep= 1pt](b1) at (6.45,0.55)[circle,fill]{};
	\node[inner sep= 1pt](b2) at (6,0.7)[circle,fill]{};
	\node[inner sep= 1pt](b3) at (5.55,0.55)[circle,fill]{};
	\node[inner sep= 1pt](b4) at (5.55,-0.55)[circle,fill]{};
	\node[inner sep= 1pt](b5) at (6,-0.7)[circle,fill]{};
	\node[inner sep= 1pt](b6) at (6.45,-0.55)[circle,fill]{};
	\node[inner sep= 1pt](b7) at (6.65,0.25)[circle,fill]{};
	\node[inner sep= 1pt](b8) at (5.35,0.25)[circle,fill]{};
	\node[inner sep= 1pt](b12) at (5.35,-0.25)[circle,fill]{};
	\node[inner sep= 1pt](b13) at (6.65,-0.25)[circle,fill]{};
	\draw (b12)--(4,0.5);
	\draw (b12)--(4,-0.7);
	\draw (4,0.5)--(4,-0.7);
	\draw (b13)--(8,0.5);
	\draw (b13)--(8,-0.7);
	\draw (8,0.5)--(8,-0.7);
	\draw[decorate, decoration=snake, segment length=10mm,black][dashed] (2,0)--(4,-0.2);
	\draw[red,very thick,dashed,->] (8.9,0) -- (10,0);
       \draw[black] (12,0) circle (0.4cm);
	   \draw[black] (13.5,0) circle (0.4cm);
	   \draw[black] (15,0) circle (0.4cm);
	   \draw[black] (16.5,0) circle (0.4cm);
	   \draw[decorate, decoration=snake, segment length=3mm,black] (12.4,0)--(13.1,0);
	   \draw[decorate, decoration=snake, segment length=3mm,black] (13.9,0)--(14.6,0);
	   \draw[decorate, decoration=snake, segment length=3mm,black] (15.4,0)--(16.1,0);
	   \draw (11.6,0)--(10.7,0.4);
	   \draw (11.6,0)--(10.7,-0.5);
	   \draw (10.7,0.4)--(10.7,-0.5);
	   \draw (16.9,0)--(17.8,0.4);
	   \draw (16.9,0)--(17.8,-0.5);
	   \draw (17.8,0.4)--(17.8,-0.5);
	\end{tikzpicture}
\caption{Adjuster. \label{pic-adjuster}}
	\end{figure}

We need the following definition to record the parity of paths between two vertices in a connected bipartite graph. For any connected bipartite graph $H$ and $u,v\in V(H)$, let
	$$
	\pi(u,v,H)=\left\{\begin{array}{ll}
		0 & \text{ if $u$ and $v$ are in the same vertex class in the (unique) bipartition of $H$},\\
		1 & \text{ if $u$ and $v$ are in different vertex classes in the bipartition of $H$}.
	\end{array}
	\right.
	$$

Using adjusters, Liu and Montgomery~\cite{LiuMont20} proved the following.

\begin{lemma}[\cite{LiuMont20}, Lemma 4.8]\label{lem-finalconnect}
	There exists some $\eps_1>0$ such that, for any $0<\eps_2<1/5$ and $b\geq 10$, there exists $d_0=d_0(\eps_1,\eps_2,b)$ such that the following holds for each $n\geq d\geq d_0$. Suppose that $G$ is an $n$-vertex $Q_3$-free bipartite $(\eps_1,\eps_2 d)$-expander with $\delta(G)\ge d$.
	
	Suppose $\log^{10} n\leq D\leq \log^{b}n$, and $U\subseteq V(G)$ with $|U|\leq D/2\log^3n$, and let $m=\frac{8000}{\eps_1}\log^3n$. Suppose $F_1,F_2\subseteq G-U$ are vertex-disjoint such that $F_i$ is a $(D,m)$-expansion of $v_i$, for each $i\in[2]$. Let $\log^{7}n\leq \ell\leq n/\log^{10}n$ be such that $\ell=\pi(v_1,v_2,G)\mod 2$.
	
	Then, there is a $v_1,v_2$-path with length $\ell$ in $G-U$.
\end{lemma}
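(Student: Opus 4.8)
The plan is to build, between $v_1$ and $v_2$, a long \emph{adjuster}: a chain of even cycles $C^{(1)},\dots,C^{(K)}$, where on each $C^{(j)}$ two almost‑antipodal vertices carry large connected ``buffer'' expansions, consecutive buffers are joined by short paths, and the two ends of the chain are linked to $v_1,v_2$ through $F_1,F_2$. Traversing $C^{(j)}$ along either of its two arcs between the attachment vertices changes the total length by exactly $2$, so the lengths of the $v_1,v_2$‑paths supported by this fixed skeleton form an arithmetic progression of common difference $2$ and length $K+1$, starting at the base length $\ell_{\mathrm{base}}$ (all shorter arcs chosen). Since $G$ is bipartite, every $v_1,v_2$‑path has the same parity $\pi(v_1,v_2,G)$, which is exactly the parity of this progression; hence it is enough to arrange $\ell_{\mathrm{base}}\le\ell\le\ell_{\mathrm{base}}+2K$ and then read off the arc choices giving total length $\ell$.

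\textbf{One link.} I would first produce a single adjusting gadget robustly inside $G$ while avoiding a prescribed set $Z$ (eventually $Z$ contains $U$ together with everything previously built). Around a few well‑separated vertices, grow balls of polylogarithmic radius avoiding $Z$: as long as $|Z|=o(n/\log^2 n)$ and $Z$ is kept thin around each growing ball, the expansion axiom (via Proposition~\ref{prop-exp-HL}) yields, after trimming by Proposition~\ref{prop:trimming}, disjoint $(\mathrm{poly}\log n,\mathrm{poly}\log n)$‑expansions $F',F''$. Apply Lemma~\ref{lem-short-diam-new} twice in $G-Z$ to get two internally disjoint paths between $F'$ and $F''$ of length $O(\log^3 n)$; by rerouting one of them slightly through the buffers one forces the two path‑lengths to have equal parity and to differ by exactly $2$, so their union is an even cycle $C^{(j)}$ of length $O(\log^3 n)$ with two almost‑antipodal attachment vertices and spare buffer still attached. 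The hypothesis that $G$ is $Q_3$‑free enters here: by Proposition~\ref{prop-Q3} a set of more than $\binom{|W|}{3}$ vertices cannot all have $\ge 4$ neighbours in a fixed small set $W$, which is precisely what stops a growing buffer from being ``funnelled'' into a tiny set that we must avoid.

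\textbf{Chaining and hitting $\ell$.} Repeat the previous step $K$ times, each time updating $Z$ to include the new link and joining its buffer to the previous buffer by a short path from Lemma~\ref{lem-short-diam-new}; finally connect the two terminal buffers to $v_1$ and $v_2$ through $F_1,F_2$, again by Lemma~\ref{lem-short-diam-new} in $G-Z$ with $Z\supseteq U$. Each link and connector uses $O(\log^4 n)$ vertices and contributes $\Theta(\log^3 n)$ to $\ell_{\mathrm{base}}$ and $2$ to the adjustment range, so taking $K=\Theta(\ell/\log^3 n)$ gives $\ell_{\mathrm{base}}\le\ell\le\ell_{\mathrm{base}}+2K$, while the total number of vertices used is $O(K\log^4 n)=O(\ell\log n)=O(n/\log^{9}n)=o(n)$ thanks to $\ell\le n/\log^{10}n$; hence $Z$ always stays within the size and thinness budgets required by Lemma~\ref{lem-short-diam-new} and Proposition~\ref{prop-exp-HL}. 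As $\ell$ has the correct parity, it is a term of the progression of realizable lengths, and choosing in each $C^{(j)}$ the shorter or longer arc accordingly produces a $v_1,v_2$‑path of length exactly $\ell$ disjoint from $U$.

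\textbf{Main obstacle.} The real difficulty is not the idea but the bookkeeping forced by the \emph{sublinearity} (hence non‑additivity) of the expansion: across all $\Theta(\ell/\log^3 n)$ stages one must certify that the set $Z$ of used vertices remains both $o(n/\log^2 n)$ in size and thin around every set being grown, so that fresh buffers of the required polylogarithmic size and connectors of length $O(\log^3 n)$ keep existing in $G-Z$; and one must track parities so that each cycle can genuinely be made even with its two arcs differing by exactly $2$ (this is where ``almost antipodal'' must be engineered precisely and where bipartiteness and $Q_3$‑freeness are used repeatedly). Packaging ``one link $+$ chaining'' into a clean inductive lemma about adjusters --- with quantitative control of the base length, the adjustment range, the vertex budget, and thinness --- is the substantive step; granting such a lemma, Lemma~\ref{lem-finalconnect} follows by taking $K$ as above and reading off the arcs. (One could instead try to keep the adjuster of polylogarithmic size and reach large $\ell$ by routing through a long path, e.g.\ inside a linear‑size, polylog‑diameter set from Lemma~\ref{lem-find-large-ball}; but finding a path of \emph{prescribed} length is exactly the problem at hand, so building the adjuster long is the clean route.)
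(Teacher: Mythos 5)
First, a point of comparison: the paper does not prove Lemma~\ref{lem-finalconnect} at all --- it is imported from \cite{LiuMont20} (Lemma 4.8 there), with only a one-sentence remark that the hypothesis used there (no large clique subdivision) can be replaced by $Q_3$-freeness by substituting Proposition~\ref{prop-Q3} for Proposition~3.16 of \cite{LiuMont20}. Your sketch is therefore not being measured against an in-paper argument but against the cited proof, and at the level of strategy it matches it: a chain of $\Theta(\ell/\log^3 n)$ even cycles carrying large buffers at almost-antipodal vertices, arcs differing by $2$, parity fixed by bipartiteness, and a total vertex budget of $o(n)$ controlled by $\ell\le n/\log^{10}n$. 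This is exactly the adjuster mechanism the paper outlines around Figure~\ref{pic-adjuster}, and your accounting of why the base length and the adjustment range can be made to bracket $\ell$ once $\ell\ge\log^7 n$ is sound.

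The one step that would fail as written is the construction of a single link. You propose to grow two buffers $F'$, $F''$ first, join them by two internally disjoint short paths from Lemma~\ref{lem-short-diam-new}, and then ``reroute one of them slightly'' so that the two lengths differ by exactly $2$. Lemma~\ref{lem-short-diam-new} gives no control on path lengths beyond an upper bound; the two paths need not even close up into a cycle without further splicing inside the buffers; and there is no generic rerouting that changes a path's length by exactly $2$ (even after splicing, the two attachment vertices split the resulting even cycle into arcs differing by some even $2j$, not necessarily $2$). The correct order of operations --- and the one in \cite{LiuMont20} --- is the reverse: first locate an even cycle $C$ of length $2r$, then designate two vertices at distances $r-1$ and $r+1$ along its two arcs, and only then grow the two large robust expansions from those two specific vertices while avoiding the rest of $C$; this last step is where the real work and the $Q_3$-freeness (via Proposition~\ref{prop-Q3}, to stop a buffer being funnelled into a small set) actually enter. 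You rightly identify the remaining bookkeeping (thinness of the used set around each growing buffer, base length versus adjustment range) as the substance of the argument, but you defer it to an unproved ``clean inductive lemma about adjusters''; that lemma is essentially the whole of the relevant section of \cite{LiuMont20}, so the proposal should be read as a correct plan rather than a proof.
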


The careful readers might notice that the original Lemma 4.8 in~\cite{LiuMont20} requires the graph $G$ to not contain a large clique subdivision ($\mathsf{TK}_{d/2}^{(2)}$-free). The above version for $Q_3$-free graphs~$G$ can be proved by following the proof in~\cite{LiuMont20} and replacing the use of Proposition~3.16 there with Proposition~\ref{prop-Q3} here.

	
\section{Main lemmas}\label{sec-main-proof}

To prove Theorem~\ref{thm-thomassen-conj}, we first find two copies of krakens whose cycles are of the same length. Then we link sequentially and disjointly the legs of one kraken to those of the other one. We package these two steps into the following two lemmas, respectively.

\smallskip

The first lemma is the key one, which constructs  a kraken robustly in an expander.

\begin{lemma}\label{lem-robust-kraken}
	For each $0<\eps_1,\eps_2<1$ and integer $b\ge 10$, there exists $d_0=d_0(\eps_1,\eps_2,b)$ such that the following holds for each $n\ge d\ge d_0$.
	
	Let $G$ be a $Q_3$-free $n$-vertex bipartite $(\eps_1,\eps_2 d)$-expander with $\delta(G)\geq d$. Let $L$ be the set of vertices with degree at least $e^{(\log\log n)^2}$. Let $m=200\eps^{-1}\log^3n$ and let $U\subseteq V(G)$ satisfy $|U|\leq (\log n)^{2b}$. 
	
	Then, for some $k\leq\log n$, $G-U$ contains a $(k,2m,(\log n)^{b})$-kraken $(C, u_j, F_j, P_j)$, $j\in[k]$,  such that 
	\begin{itemize}
	    \item for each $j\in [k]$, either $u_j\in L$ or $F_j\subseteq G-L$; and
	    
	    \item any distinct legs $F_{j}, F_{j'}$ in $G-L$ are a distance at least $(\log n)^{1/10}$ apart from each other and from $U\setminus L$ in $G-L$.
	\end{itemize}
\end{lemma}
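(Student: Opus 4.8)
The plan is to bootstrap from the non-robust kraken existence result, Lemma~\ref{lem-kraken-from-kraken}, by first passing to a subgraph where the forbidden set $U$ (and the high-degree set $L$) can be controlled, and then carefully rebuilding the legs so that the required distance/avoidance conditions hold. The key tension is that Lemma~\ref{lem-kraken-from-kraken} gives \emph{some} kraken but with no control over where its legs land, whereas here we must avoid $U$, keep legs far apart in $G-L$, and moreover dichotomize each leg (either rooted at a high-degree vertex or entirely low-degree). Since $|U|\le(\log n)^{2b}$ is tiny compared to the target leg size $(\log n)^b$ — wait, it is actually larger — the point is that $U$ is still negligible against the \emph{linear} scales at which the cycle $C$ (of length $2m=\Theta(\log^3 n)$) and the ambient expansion operate, so we can route around it.

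First I would apply Lemma~\ref{lem-kraken-from-kraken} inside $G-U$ — note $G-U$ is still an $(\eps_1,\eps_2 d)$-expander with $\delta\ge d-|U|\ge d/2$ after adjusting constants, since removing $o(n/\log^2 n)$ vertices preserves sublinear expansion — to obtain a $(k,m,(\log n)^{b'})$-kraken for a slightly larger exponent $b'$ and some $k\le\log n$, with legs of size a bit bigger than the final target. Next, I would handle the dichotomy: fix the cycle $C$ and the paths $P_j$, and for each $j$ look at the leg $F_j$. If $u_j\in L$ we are done for that leg (trim $F_j$ down to size $(\log n)^b$ using Proposition~\ref{prop:trimming}). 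If $u_j\notin L$, I need to produce a replacement leg lying entirely in $G-L$. Here the intersection $F_j\cap L$ could be substantial, so instead of salvaging $F_j$ I would regrow a leg from $u_j$ (or from a nearby low-degree vertex reachable from $v_j$ by a short path) inside $G-L-U-(\text{other legs/}C)$, using the robust expansion Proposition~\ref{prop-exp-HL}: the set of already-used vertices is of size $O(\log n\cdot(\log n)^{b'})=\mathrm{poly}\log n$, which is $(\lambda,k)$-thin around a single vertex, and also $|L|$ cannot be too large (a counting bound: $L$ has at most $2e(G)/e^{(\log\log n)^2}=o(n/\log^2 n)$ vertices since $G$ has bounded degeneracy after passing to the expander, actually $\delta(G)\ge d$ gives $e(G)\le\Delta n$ — one needs the bipartite expander structure here, or simply that $L$ being large would contradict... ) so $G-L$ still expands. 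This regrowth produces a $(D,m')$-expansion of $u_j$ for $D\ge(\log n)^b$ with radius $m'=O(\log^3 n)\le 2m - \ell(P_j)$, giving the new leg; trim to size $(\log n)^b$.

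The main obstacle is the \emph{simultaneous} requirement that the low-degree legs be pairwise $(\log n)^{1/10}$-apart and that regrowing them does not destroy the earlier legs. I would process the low-degree legs one at a time, at each stage adding the previously built legs, the cycle $C$, all paths $P_j$, the set $U\setminus L$, and a $(\log n)^{1/10}$-ball around everything built so far into the forbidden set; crucially each regrowth only needs to \emph{start} at $u_j$ and expand to polylogarithmic size, and by Lemma~\ref{lem-short-diam-new}/Proposition~\ref{prop-exp-HL} the forbidden set of size $\mathrm{poly}\log n$ is still thin enough (against the relevant $x\ge(\log n)^b$ scale) not to block expansion. The delicate points will be (a) verifying the thinness hypothesis in Proposition~\ref{prop-exp-HL} with the accumulated forbidden set, which requires the forbidden set to grow polynomially-in-$i$ slowly around each new root — this follows since each ball $B^{i}$ around $u_j$ only meets a bounded number of the $O(\log n)$ other small legs, each contributing $\mathrm{poly}\log n$; and (b) ensuring that when $u_j\in L$ we do not need to move anything, but the leg $F_j$ (kept from the original kraken) might come within distance $(\log n)^{1/10}$ of a low-degree leg — but the second bullet only demands distance between legs that are \emph{both} in $G-L$, so legs rooted in $L$ impose no separation constraint, which is exactly why the dichotomy is set up this way. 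Once all legs are rebuilt and trimmed, the tuple $(C,u_j,F_j,P_j)$ is the desired $(k,2m,(\log n)^b)$-kraken in $G-U$.
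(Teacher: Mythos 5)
There is a genuine gap, and it is precisely the point the lemma is designed to overcome. Your plan is local: take one kraken from Lemma~\ref{lem-kraken-from-kraken} and regrow each problematic leg from its end $u_j$ using Proposition~\ref{prop-exp-HL}. But in a sublinear expander a \emph{single} polylogarithmic set need not expand past an adversarially placed set of size $(\log n)^{2b}$: the expansion in Definition~\ref{def-expander} is only robust against deleting $d(G)\eps(|X|)|X|$ \emph{edges}, and deleting the vertex set $U$ (or the other legs, the cycle, and previously built legs) can remove far more edges around a polylog-sized $X$ than that. Concretely, the thinness hypothesis of Proposition~\ref{prop-exp-HL} can simply fail --- e.g.\ all of $U$, or another leg, may sit inside the first sphere around $u_j$, violating $|N(B^{i-1}(A))\cap W|\le\lambda i^k$ for $i=1$ with constant $\lambda$ --- and your assertion that $G-U$ remains an $(\eps_1,\eps_2 d)$-expander is likewise unjustified for the same reason. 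Your separation step compounds this: a radius-$(\log n)^{1/10}$ ball around the already-built structure in $G-L$ has size up to $\mathrm{poly}(\log n)\cdot e^{(\log\log n)^2(\log n)^{1/10}}=n^{o(1)}$, which is superpolylogarithmic and cannot be absorbed as a thin or small exceptional set when expanding the next leg.

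The paper's proof is global rather than local, and its engine is Lemma~\ref{lem-expand-together}, which you never invoke: one builds a \emph{maximal} family $\mathbf{K}$ of at least $n^{1/8}$ krakens with far-apart low-degree legs, pre-places a far-apart family $\cZ$ of large low-diameter landing sets, and takes maximal systems $\cP,\cQ$ of short paths from legs to $L$ and to $\cZ$ (with total length minimised, which is what makes the path systems thin around the free legs). Maximality of $\cP\cup\cQ$ shows every kraken retains a free leg (else one assembles the desired kraken from $N(u)$ for $u\in L$ and from trimmed sets in $\cZ$, which are far apart by construction rather than by avoidance); then the collective expansion lemma guarantees that \emph{some one} of the $n^{1/8}$ free legs expands to size $(\log n)^{200b}$, and Lemma~\ref{lem-short-diam-new} connects it to an unused $Z\in\cZ$, contradicting maximality of $\cQ$. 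Without this ``many candidates, one succeeds'' mechanism, the step where you claim a fixed leg expands to size $(\log n)^b$ while avoiding everything has no justification.
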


The next lemma carries out the finishing blow. It allows us to link each pair of vertices in the cycles inside krakens via their legs disjointly to construct a pillar. Lemma~\ref{lem-finalconnect} kicks in here to make sure that all the paths used are of the same length.

\begin{lemma}\label{lem-link-2pulp-adj}
	For each $0<\eps_1,\eps_2<1$ and $t\ge 10$, there exists $d_0=d_0(\eps_1,\eps_2,t)$ such that the following holds for each $n\ge d\ge d_0$.
	
	Let $G$ be a $Q_3$-free $n$-vertex bipartite $(\eps_1,\eps_2 d)$-expander with $\delta(G)\geq d$ and $L$ be the set of vertices with degree at least $e^{(\log\log n)^2}$. Let $m=400\eps^{-1}\log^3n$ and let $\mathsf{K}_\alpha=(C_\alpha,u_j^\alpha,F_j^\alpha,P_j^\alpha)$ and $\mathsf{K}_\beta=(C_\beta,u_j^\beta,F_j^\beta,P_j^\beta)$, $j\in[s]$, be two disjoint $(s,m,(\log n)^{2t})$-krakens, for some $s\leq\log n$, with $V(C_\alpha)=\{v_1^\alpha,\dots,v_s^\alpha\}$ and $V(C_\beta)=\{v_1^\alpha,\dots,v_s^\alpha\}$, and such that 
	\begin{itemize}
	    \item for each $\sigma\in\{\alpha,\beta\}$ and $j\in [s]$, either $u_j^\sigma\in L$ or $F_j^\sigma\subseteq G-L$; and
	    
	    \item all legs in $K_{\alpha}$ and $K_{\beta}$ lying completely in $G-L$ are a distance at least $(\log n)^{1/10}$ apart from each other in $G-L$.
	\end{itemize}
	Then, for any $\log^{7}n\leq \ell\leq \log^{t}n$ with $\ell=\pi(v_1^{\alpha},v_1^{\beta},G)$, there is a collection of pairwise disjoint paths $Q_i$, $i\in[s]$, such that each $Q_i$ is a $v_i^\alpha,v_i^\beta$-path of length $\ell$ internally disjoint from $C_\alpha$ and $C_\beta$.
\end{lemma}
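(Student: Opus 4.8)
\textbf{Proof plan for Lemma~\ref{lem-link-2pulp-adj}.}
The idea is to build the $s$ paths $Q_1,\dots,Q_s$ one at a time, greedily, using Lemma~\ref{lem-finalconnect} to connect the two krakens at the $i$-th coordinate through their respective legs $F_i^\alpha$ and $F_i^\beta$, while at each step reserving all previously used vertices (and the cycles $C_\alpha,C_\beta$) as a forbidden set $U$. The key point is that each path $Q_i$ should traverse the kraken-path $P_i^\alpha$ from $v_i^\alpha$ to $u_i^\alpha$, then dive into the leg $F_i^\alpha$, cross to $F_i^\beta$ via a path supplied by Lemma~\ref{lem-finalconnect}, and come back out along $P_i^\beta$ to $v_i^\beta$; since $\ell(P_i^\alpha),\ell(P_i^\beta)\le 10m$ are fixed once the krakens are given, the length of $Q_i$ is determined by the length of the middle leg-to-leg path, and Lemma~\ref{lem-finalconnect} lets us prescribe that length in a wide polylogarithmic window and of either parity (subject to the parity constraint $\pi(v_1^\alpha,v_1^\beta,G)$). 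So the first step is bookkeeping: fix a target length $\ell$ in the allowed range, and for each $i$ compute the exact length $\ell_i'$ that the middle path between $F_i^\alpha$ and $F_i^\beta$ must have so that the total length of $Q_i$ equals $\ell$; check that $\ell_i'$ lies in the admissible window $[\log^7 n, n/\log^{10}n]$ of Lemma~\ref{lem-finalconnect} and has the correct parity. The parity check is where the bipartiteness and the $\pi$-function are used: $\ell=\pi(v_1^\alpha,v_1^\beta,G)\bmod 2$ should propagate to the right parity of each $\ell_i'$ once we account for the (fixed, known) parities of $P_i^\alpha$ and $P_i^\beta$, and since all $v_i^\alpha$ lie on the short cycle $C_\alpha$ they are at controlled distance from each other, likewise $v_i^\beta$ on $C_\beta$, so $\pi(v_i^\alpha,v_i^\beta,G)$ is consistent across $i$ up to parities we can read off.

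Next I would set up the iteration. At stage $i$, let $U_i$ be the union of $V(C_\alpha)\cup V(C_\beta)$, all the other kraken-paths $P_j^\sigma$ and legs $F_j^\sigma$ for $j\ne i$, together with all vertices of the already-constructed paths $Q_1,\dots,Q_{i-1}$. We must verify $|U_i|\le D/2\log^3 n$ where $D=(\log n)^{2t}$ is the leg size: each $Q_j$ has length at most roughly $\log^t n + 20m = O(\log^t n)$, the legs have size $(\log n)^{2t}$ and there are at most $s\le\log n$ of them, so $|U_i| = O(\log^{2t+1} n)$, which is comfortably below $(\log n)^{2t}/2\log^3 n$ provided we trim the legs: crucially, we should apply Proposition~\ref{prop:trimming} to shrink each leg $F_i^\sigma$ to a $(D',m')$-expansion of $u_i^\sigma$ with $D' = \log^{10} n$ (or a bit more) so that the accumulated forbidden set stays small relative to the leg sizes we actually feed into Lemma~\ref{lem-finalconnect} --- wait, we need the leg we feed in to be \emph{large} compared to $|U_i|$, so instead we keep the legs at full size $(\log n)^{2t}$ and it is the \emph{total} of the small objects ($O(\log^{2t+1}n)$ coming mostly from the other full-size legs) that must be bounded by $D/2\log^3 n$; this forces $t$ large enough relative to itself, i.e. we actually want $D = (\log n)^{2t}$ but only need to connect with prescribed length up to $\log^t n$, so $|U_i| = O(s\cdot(\log n)^{2t}) $ is \emph{too big}. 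The fix, and the genuinely delicate part, is that the other legs $F_j^\sigma$ with $j\ne i$ do not \emph{all} need to be avoided wholesale: a path $Q_j$ already built only uses a tiny piece of $F_j^\alpha\cup F_j^\beta$ (a single path through it), and the still-unused legs $F_j^\sigma$ with $j>i$ are only a problem if $Q_i$ runs into them, which we can prevent because $Q_i$ will be produced by Lemma~\ref{lem-finalconnect} applied inside $G - U_i$ where $U_i$ only needs to contain the \emph{used} portions. So the correct $U_i$ is $V(C_\alpha)\cup V(C_\beta) \cup \bigcup_{j<i} V(Q_j)$ plus, to be safe, the full legs and paths with $j<i$; its size is $O(i\cdot \log^t n) + O(i \cdot \log^{2t} n) = O(\log^{2t+1} n)$ from the used legs --- still the same issue. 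The real resolution is the standard one in this area: choose the leg-size exponent large, say the krakens have legs of size $(\log n)^{2t}$ but we only ever need to \emph{connect} with the smaller expansion size $D=\log^{10}n$ obtained by trimming each $F_i^\sigma$ down \emph{at the moment we use it}, and the vertices to avoid are the trimmed-down pieces of earlier legs plus the earlier paths, totalling $O(\log n \cdot \log^{10} n + \log n\cdot\log^t n) = O(\log^{t+1} n)$, which is $\le D/2\log^3 n$ once $t\ge 14$, say. This is exactly what the hypothesis $D\le \log^b n$ vs $|U|\le D/2\log^3 n$ in Lemma~\ref{lem-finalconnect} is built to accommodate.

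With the parameters arranged, stage $i$ proceeds as follows: trim $F_i^\alpha$ and $F_i^\beta$ (via Proposition~\ref{prop:trimming}) to $(\log^{10}n, m)$-expansions $\tilde F_i^\alpha,\tilde F_i^\beta$ of $u_i^\alpha,u_i^\beta$ inside $G - U_i$ --- here one must check these trimmed legs survive in $G-U_i$, which holds because the legs are far apart (the distance-$(\log n)^{1/10}$ hypothesis, for legs in $G-L$) or because high-degree leg-ends in $L$ give enough room, and $U_i$ is small; then apply Lemma~\ref{lem-finalconnect} with this $U_i$, the pair $\tilde F_i^\alpha,\tilde F_i^\beta$, $D=\log^{10}n$, and target length $\ell_i'$, to get a $u_i^\alpha,u_i^\beta$-path $R_i$ of length exactly $\ell_i'$ in $G - U_i$; finally set $Q_i := P_i^\alpha \cup R_i \cup P_i^\beta$, which is a $v_i^\alpha,v_i^\beta$-path (the pieces meet only at $u_i^\alpha,u_i^\beta$ and $R_i$ avoids $U_i\supseteq V(C_\alpha)\cup V(C_\beta)$, so $Q_i$ is internally disjoint from both cycles and disjoint from $Q_1,\dots,Q_{i-1}$) of length $\ell(P_i^\alpha)+\ell_i'+\ell(P_i^\beta)=\ell$. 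Update $U_{i+1} = U_i \cup V(Q_i)$ and continue. After $s$ stages we have the desired pairwise-disjoint collection $Q_1,\dots,Q_s$.

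\textbf{Main obstacle.} The crux is the parameter balancing in the iteration: ensuring that the forbidden set $U_i$ accumulated from all previously built paths and the other (trimmed) legs stays below the threshold $D/2\log^3 n$ required by Lemma~\ref{lem-finalconnect}, \emph{simultaneously} with each trimmed leg $\tilde F_i^\sigma$ remaining a valid large expansion inside $G - U_i$ (which needs the far-apart hypothesis on legs in $G-L$, and for legs with end in $L$ needs the high degree to absorb the loss). The secondary subtlety is the parity/length arithmetic: verifying that the residual target lengths $\ell_i'$ for the middle paths all fall in $[\log^7 n, n/\log^{10}n]$ with the parity dictated by $\pi(v_1^\alpha,v_1^\beta,G)$ and the fixed parities of the $P_i^\sigma$, uniformly over $i\in[s]$ --- this is where one uses that all $v_i^\alpha$ sit on one short cycle and all $v_i^\beta$ on another, so the relevant parities shift in a controlled, predictable way.
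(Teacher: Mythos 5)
Your skeleton (build the $Q_i$ greedily, protect everything already used plus the not-yet-used legs, and invoke Lemma~\ref{lem-finalconnect} with a trimmed expansion) is the right one, and your parity bookkeeping is a non-issue: since $G$ is bipartite the $v_i^\alpha$ and $v_i^\beta$ alternate sides around their cycles in step, so $\pi(v_i^\alpha,v_i^\beta,G)=\pi(v_1^\alpha,v_1^\beta,G)$ for every $i$ and the same target length $\ell$ works at every stage (the paper simply connects $v_i^\alpha$ to $v_i^\beta$ directly, folding $P_i^\sigma$ into the expansion of $v_i^\sigma$, rather than prescribing a residual length $\ell_i'$ between the $u_i^\sigma$).

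The genuine gap is the size mismatch you circle around but never resolve. The forbidden set at stage $i$ \emph{must} contain the full unused legs $F_j^\sigma$, $j>i$ (otherwise $Q_i$ may cut through or disconnect a leg that a later stage needs, and Proposition~\ref{prop:trimming} does not let you delete an arbitrary subset of an expansion and keep it an expansion of the same centre). Those legs alone contribute $\Theta(s\cdot(\log n)^{2t})=\Omega((\log n)^{2t+1})$ vertices, so Lemma~\ref{lem-finalconnect} needs $D\geq 2\log^3 n\cdot(\log n)^{2t+1}$, strictly \emph{more} than the original leg size $(\log n)^{2t}$. Your proposed fix --- trim the legs down to $\log^{10}n$ and claim the forbidden set shrinks to $O(\log^{t+1}n)$ --- therefore fails. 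The missing idea is to go the other way: before connecting, \emph{grow} each of $F_{k+1}^\alpha,F_{k+1}^\beta$ into a $(\Delta,O(m))$-expansion of $v_{k+1}^\sigma$ with $\Delta=e^{(\log\log n)^2}$, which dwarfs every polylog, and only then trim to size $(\log n)^{4t}\geq 2\log^3 n\cdot|\hat Z|$. This growth step is exactly what the lemma's hypotheses about $L$ and the $(\log n)^{1/10}$-separation are for, and it splits into three cases: if $u_{k+1}^\sigma\in L$ take its neighbourhood; if the $\ell_0$-ball around a low-degree leg avoids $L$, apply Proposition~\ref{prop-exp-HL} (the separation hypothesis guarantees this ball misses the other unused low-degree legs); otherwise follow a short path from the leg into $L$ and take the neighbourhood of its endpoint. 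Without this expansion step the parameters cannot be made to balance, so as written the argument does not close.
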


\begin{proof}[Proof of Theorem~\ref{thm-thomassen-conj}]
	By passing to a subgraph using Theorem~\ref{thm-pass-to-expander}, we may assume that we start with an $n$-vertex bipartite $(\eps_1,\eps_2d)$-expander graph with minimum degree $d$. As a 3-dimensional cube $Q_3$ is a pillar, we may further assume that $G$ is $Q_3$-free. We shall repeatedly apply Lemma~\ref{lem-robust-kraken} to obtain two krakens whose cycles are of the same length and then invoke Lemma~\ref{lem-link-2pulp-adj} to finish the proof.
	
	More precisely, let $t\ge 10$. For $i<\log n+1$, suppose we have already found $i$ disjoint $(k_i,m,(\log n)^{2t})$-krakens for some $k_i\le \log n$. Let $U$ be the union of the vertex sets of all these krakens, then, as $t\ge 10$, $|U|\leq i\cdot \log n(1+10m+(\log n)^{2t})\leq (\log n)^{4t}$. Then by Lemma~\ref{lem-robust-kraken}, we can find another kraken in $G-U$. 
	
	Thus, we can find at least $\log n+1$ disjoint krakens. As the cycle in each kraken has length at most $\log n$, by pigeonhole principle,
	among these krakens there are two whose cycles are of the same length $s$, for some $s\leq\log n$. 
	Let $L$ be the set of vertices with degree at least $e^{(\log\log n)^2}$.
	Lemma~\ref{lem-robust-kraken} and the choice of $U$ guarantee that the legs of these two krakens not containing high degree vertices from $L$ can be taken far apart from each other in $G-L$. Thus, Lemma~\ref{lem-link-2pulp-adj} applies and we can link these two krakens to obtain the desired pillar.
\end{proof}


\section{Sustainable kraken fishing}\label{sec-robust-kraken}
In this section, we prove Lemma~\ref{lem-robust-kraken}, which constructs a kraken in an expander robustly. Let us first describe the high level idea for the proof.

Suppose, for contradiction, that there is no kraken in $G-U$ with the required size and properties. Take a maximal collection of krakens $\mathbf{K}$ in $G-U-L$ so that their legs are far apart. Our goal is to show that we can eventually expand all the legs of one of these krakens to obtain a desired kraken, giving a contradiction.

To this end, we first show that there are many krakens in $\mathbf{K}$ (see Claim~\ref{claim-size-P0}). This can be done by repeatedly applying Lemma~\ref{lem-kraken-from-kraken} to some appropriate expander subgraph in $G-U-L$. We then consider maximal collections $\mathcal{P}$ and $\mathcal{Q}$ of paths from legs of krakens in $\mathbf{K}$ to either~$L\setminus U$ or to $\mathcal{Z}$, a collection of large sets  each with small diameter. We show that there cannot be a kraken in $\mathbf{K}$ with all its legs linked to paths in $\mathcal{P}\cup\mathcal{Q}$ (see Claim~\ref{claim-free-legs}), i.e., all the krakens in $\mathbf{K}$ have at least one `free' leg that is not linked to $\mathcal{P}\cup\mathcal{Q}$, as otherwise we can extend the legs of the kraken using the large sets in $\cZ$ or the neighbourhoods of the large degree vertices in $L\setminus U$ to obtain a large kraken in $G-U$.

Finally, we collectively expand all the free legs in all the krakens in $\mathbf{K}$. Then by Lemma~\ref{lem-expand-together}, one free leg of one of these krakens must expand and can therefore be linked to an unused set in $\mathcal{Z}$, contradicting the maximality of $\mathcal{Q}$. This will conclude the proof.

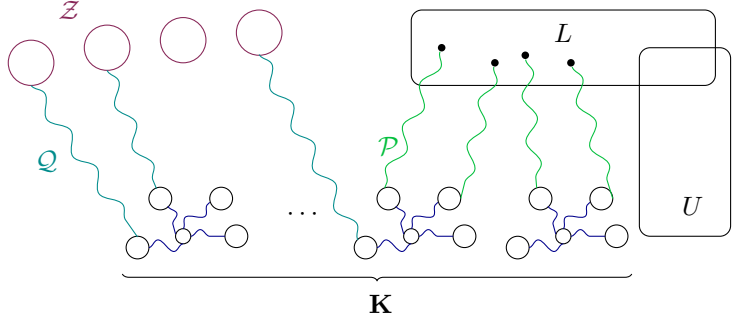
\begin{figure}[h]
	\centering
	\begin{tikzpicture}
		\draw[rounded corners] (2,2) rectangle (6,3);
		\draw[rounded corners] (5,0) rectangle (6.2,2.5);
		\node[inner sep= 1pt](t1) at (4,2.7){\small$L$};
		\node[inner sep= 1pt](t2) at (5.7,0.4){\small$U$};
		\draw[snake=brace,gap around snake=5mm] (5.4,-0.5)--(-2.3,-0.5);
		\node[inner sep= 1pt](t7) at (1.6,-0.9){\small$\mathbf{K}$};
		\draw[black] (4,0) circle (0.1cm);
		\draw[black] (4.7,0) circle (0.15cm);
		\draw[decorate, decoration=snake, segment length=4mm,darkblue] (4.1,0) -- (4.55,0);
		\draw[black] (4.5,0.5) circle (0.15cm);
		\draw[decorate, decoration=snake, segment length=4mm,darkblue] (4,0.1) -- (4.35,0.45);
		\draw[black] (3.7,0.5) circle (0.15cm);
		\draw[decorate, decoration=snake, segment length=4mm,darkblue] (3.8,0.4) -- (3.9,0.03);
		\draw[black] (3.4,-0.15) circle (0.15cm);
		\draw[decorate, decoration=snake, segment length=4mm,darkblue] (3.55,-0.15) -- (4,-0.1);
		\draw[black] (2,0) circle (0.1cm);
		\draw[black] (2.7,0) circle (0.15cm);
		\draw[decorate, decoration=snake, segment length=4mm,darkblue] (2.1,0) -- (2.55,0);
		\draw[black] (2.5,0.5) circle (0.15cm);
		\draw[decorate, decoration=snake, segment length=4mm,darkblue] (2,0.1) -- (2.35,0.45);
		\draw[black] (1.7,0.5) circle (0.15cm);
		\draw[decorate, decoration=snake, segment length=4mm,darkblue] (1.8,0.4) -- (1.9,0.03);
		\draw[black] (1.4,-0.15) circle (0.15cm);
		\draw[decorate, decoration=snake, segment length=4mm,darkblue] (1.55,-0.15) -- (2,-0.1);
		\draw[black] (-1,0) circle (0.1cm);
		\draw[black] (-0.3,0) circle (0.15cm);
		\draw[decorate, decoration=snake, segment length=4mm,darkblue] (-0.9,0) -- (-0.45,0);
		\draw[black] (-0.5,0.5) circle (0.15cm);
		\draw[decorate, decoration=snake, segment length=4mm,darkblue] (-1,0.1) -- (-0.65,0.45);
		\draw[black] (-1.3,0.5) circle (0.15cm);
		\draw[decorate, decoration=snake, segment length=4mm,darkblue] (-1.2,0.4) -- (-1.1,0.03);
		\draw[black] (-1.6,-0.15) circle (0.15cm);
		\draw[decorate, decoration=snake, segment length=4mm,darkblue] (-1.45,-0.15) -- (-1,-0.1);
		\node[inner sep= 1pt](t3) at (0.6,0.3){\small$\cdots$};
		\draw[darkraspberry] (-2,2.5) circle (0.3cm);
		\draw[darkraspberry] (-3,2.3) circle (0.3cm);
		\draw[darkraspberry] (-1,2.6) circle (0.3cm);
		\draw[darkraspberry] (0,2.7) circle (0.3cm);
		\node[inner sep= 1pt,darkraspberry](t4) at (-2.5,3){\small$\mathcal{Z}$};
		\node[inner sep= 1pt](l1) at (2.4,2.5)[circle,fill]{};
		\node[inner sep= 1pt](l3) at (3.1,2.3)[circle,fill]{};
		\node[inner sep= 1pt](l4) at (3.5,2.4)[circle,fill]{};
		\node[inner sep= 1pt](l5) at (4.1,2.3)[circle,fill]{};
		\draw[decorate, decoration=snake, segment length=6mm,darkpastelgreen] (l5) -- (4.65,0.5);
		\draw[decorate, decoration=snake, segment length=6mm,darkpastelgreen] (l4) -- (3.65,0.64);
		\draw[decorate, decoration=snake, segment length=6mm,darkpastelgreen] (l3) -- (2.65,0.5);
		\draw[decorate, decoration=snake, segment length=6mm,darkpastelgreen] (l1) -- (1.65,0.64);
		\node[inner sep= 1pt,darkpastelgreen](t6) at (1.7,1.2){\small$\mathcal{P}$};
		\draw[decorate, decoration=snake, segment length=6mm,darkcyan] (0,2.41) -- (1.3,-0.05);
		\draw[decorate, decoration=snake, segment length=6mm,darkcyan] (-3,2) -- (-1.6,-0.01);
		\draw[decorate, decoration=snake, segment length=6mm,darkcyan] (-2,2.2) -- (-1.35,0.63);
		\node[inner sep= 1pt,darkcyan](t4) at (-2.8,1){\small$\mathcal{Q}$};
	\end{tikzpicture}
\caption{An illustration of the proof of Lemma~\ref{lem-robust-kraken}.}
\end{figure}

\begin{proof}[Proof of Lemma~\ref{lem-robust-kraken}]
	Suppose, for contradiction, that, for any $k\leq\log n$, $G-U$ contains no $(k,2m,(\log n)^{b})$-kraken with the desired properties. Set 
	$$\ell_0=(\log\log n)^{20}, \quad \Delta=e^{(\log\log n)^2},  \quad\text{ and } \quad G':=G-L.$$ 
	Thus, $\Delta(G')\leq \Delta$. Further define 
	$$U_0=\{v\in V(G)\setminus U: d_G(v,U)\geq d/2\}.$$ 
	Note that, if $|U_0|\geq (\log n)^{6b}\geq |U|^3$, then, by Proposition~\ref{prop-Q3} with $(U,W)_{\ref{prop-Q3}}=(U_0,U)$, $G$ contains a copy of $Q_3$, a contradiction. Therefore, we can assume that $|U_0|\leq(\log n)^{6b}$, and hence, as $\delta(G)\geq d$ and $n\geq d_0(\eps_1,\eps_2,b)$ is large, $G-U$ contains at least 
	$$\frac{1}{2}\sum_{v\not\in U\cup U_0}d_{G-U}(v)\ge \Big(n-|U|-|U_0|\Big)\cdot\frac{d}{4}\geq \Big(n-(\log n)^{2b}-(\log n)^{6b}\Big)\cdot\frac{d}{4}\geq \frac{nd}{8}$$
	edges. Let $U_1:=U\cup U_0$, we have that $|U_1|\leq (\log n)^{2b}+(\log n)^{6b}\leq 2(\log n)^{6b}$.
	
	\smallskip
	
	Take a maximal collection $\mathbf{K}$ of krakens in $G-U$ such that the following hold:
	\stepcounter{propcounter}
	\begin{enumerate}[label = {\bfseries \Alph{propcounter}\arabic{enumi}}]
		\item The legs of each kraken $\mathsf{K}=(C,u_j,F_j,P_j)\in\mathbf{K}$ that do not contain a vertex in $L$ lie in $G'$ and are at least $10\ell_0$-apart from the legs of other krakens and from $U_1\setminus L$ in $G'$.\label{pulp1}
		\item We can index krakens in $\mathbf{K}$ so that each $\mathsf{K}_H\in\mathbf{K}$ is a $(k_H,m_H,(\log n_H)^{b})$-kraken, for some $n_H,m_H,k_H\in\mathbb{N}$ with $\frac{d}{64}\le n_H\le n$, $m_H=200\eps^{-1}\log^3n_H \leq m$ and $k_H\leq\log n_H$.\label{pulp2}
	\end{enumerate}

\begin{claim}\label{claim-size-P0}
	$|\mathbf{K}|\geq n^{1/8}$.
\end{claim}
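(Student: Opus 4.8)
The plan is to extract a large sublinear expander subgraph from the dense leftover $G-U_1$ and repeatedly harvest krakens out of it via Lemma~\ref{lem-kraken-from-kraken}, removing each kraken (together with a small buffer around its legs) before fishing for the next one. Concretely, since we showed $e(G-U)\geq nd/8$, the graph $G'':=G-U_1$ has average degree at least, say, $d/8$, so by Theorem~\ref{thm-pass-to-expander} it contains a bipartite $(\eps_1,\eps_2 d')$-expander subgraph $H_0$ with $\delta(H_0)\geq d'$ where $d' = \Theta(d)$; moreover $H_0$ has at least $nd'/4$ edges, so $|H_0|\geq d'$, and in fact $H_0$ has $n_0$ vertices with $d'\le n_0\le n$. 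Applying Lemma~\ref{lem-kraken-from-kraken} to (a suitable further expander subgraph of the untouched part of) $H_0$ yields a $(k,m_H,(\log n_H)^b)$-kraken for some $k\le\log n_H$, and because $H_0\subseteq G-L$, every leg of this kraken automatically lies in $G'=G-L$ and contains no vertex of $L$. This kraken satisfies the size bookkeeping required by \ref{pulp2} once we check $n_H\ge d/64$, which holds because at each stage the remaining expander still has $\Theta(nd)$ edges and hence $\Omega(d)$ vertices.

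The key step is the iteration: after $i$ krakens have been found, the vertices used are the $i$ cycles, legs and connecting paths together with the $10\ell_0$-neighbourhoods of their legs in $G'$; since each leg has size $(\log n)^b$ and $\Delta(G')\le\Delta=e^{(\log\log n)^2}$, the ball $B^{10\ell_0}_{G'}$ around a single leg has size at most $(\log n)^b\cdot\Delta^{10\ell_0}=\exp(O((\log\log n)^{22}))$, which is $n^{o(1)}$. So removing all this from $G''$ after $i<n^{1/8}$ steps destroys at most $n^{1/8}\cdot\log n\cdot\exp(O((\log\log n)^{22}))=n^{1/8+o(1)}$ vertices, and (using $\delta(G)\ge d$, with a bound like $|X|\cdot d$ on the edges incident to a removed set $X$) at most $n^{1/8+o(1)}\cdot d$ edges. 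Since $G''$ had at least $nd/8$ edges to begin with, the remaining graph still has at least $nd/16$ edges and hence average degree $\ge d/16$, so Theorem~\ref{thm-pass-to-expander} again produces an expander subgraph on $\Omega(d)$ vertices in which Lemma~\ref{lem-kraken-from-kraken} supplies the next kraken. This new kraken is disjoint from all previous ones and, since the buffers were removed, its legs in $G'$ are at distance more than $10\ell_0$ from the legs of all previous krakens and from $U_1\setminus L$ in $G'$, so \ref{pulp1} is preserved. Hence the collection can be grown as long as fewer than $n^{1/8}$ krakens have been found, so maximality of $\mathbf K$ forces $|\mathbf K|\ge n^{1/8}$.

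The main obstacle — and the only place one has to be a little careful — is controlling the size of the removed buffer so that the expander regeneration argument goes through for all $i$ up to $n^{1/8}$ simultaneously; this is where the sublinearity of the expansion is genuinely used, because we cannot afford the buffers to be even polynomially large, and it is precisely the maximum-degree bound $\Delta(G')\le e^{(\log\log n)^2}$ (coming from the definition of $L$) that makes $B^{10\ell_0}_{G'}(\text{leg})$ only quasi-polylogarithmic in size. One should also double-check that the expander-subgraph extraction via Theorem~\ref{thm-pass-to-expander} does not shrink the parameter $n_H$ below $d/64$ and keeps $\eps_1$ fixed (Theorem~\ref{thm-pass-to-expander} gives a uniform $\eps_1$), so that all krakens produced lie in the same family of parameters demanded by \ref{pulp2}; both are routine given the edge count $\ge nd/16$ maintained throughout. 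A minor point is that Lemma~\ref{lem-kraken-from-kraken} is stated for a graph $G$ of its own, so one simply applies it to the current expander subgraph and reads $n_H$ as that subgraph's order, noting $m_H=200\eps_1^{-1}\log^3 n_H\le 200\eps_1^{-1}\log^3 n=m$.
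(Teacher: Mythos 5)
Your overall strategy is the paper's own argument recast in greedy form rather than as maximality-plus-contradiction: excise the krakens found so far together with $10\ell_0$-balls around them in $G'$, check that only few edges are lost, re-extract an expander via Theorem~\ref{thm-pass-to-expander}, and harvest the next kraken with Lemma~\ref{lem-kraken-from-kraken}. The structure is right, but the edge-counting step fails as written. You bound the edges incident to a removed set $X$ by $|X|\cdot d$ ``using $\delta(G)\ge d$''; a minimum-degree hypothesis gives a \emph{lower} bound on $\sum_{v\in X}d(v)$, not an upper bound. More seriously, the set you propose to delete --- the full vertex sets of the krakens, all of $U_1$ (hence $U_0$, which may meet $L$), and the buffers --- can contain vertices of $L$, whose degrees are bounded only by $n$; deleting even $(\log n)^{6b}$ such vertices can destroy far more than $nd/8$ edges when $d$ is a constant. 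For the same reason your claim that $G-U_1$ has average degree at least $d/8$ is unjustified, and the assertion $H_0\subseteq G-L$ is false ($G-U_1$ still contains $L\setminus U_1$); the latter is harmless only because \ref{pulp1} places no constraint on legs containing a vertex of $L$.

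The repair is exactly the paper's device: excise only the low-degree part, i.e.\ set $W=\bigl(U_1\cup\bigcup_{\mathsf{K}\in\mathbf{K}}V(\mathsf{K})\bigr)\setminus L$ and $W'=B^{10\ell_0}_{G'}(W)$. Then $W'\subseteq V(G)\setminus L$, every vertex of $W'$ has $G$-degree less than $\Delta=e^{(\log\log n)^2}$, and the number of edges meeting $W'$ is at most $|W'|\Delta\le n^{1/6}\Delta\le nd/16$ --- the maximum-degree bound you correctly invoked to control the buffer \emph{sizes} must also be the one controlling the \emph{edge} count. Note that \ref{pulp1} only requires the legs avoiding $L$ to be far apart, and the krakens in $\mathbf{K}$ need not be pairwise vertex-disjoint, so there is no need (and no way, given the edge budget) to delete the high-degree vertices of earlier krakens before finding the next one.
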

\begin{poc}
	Suppose, for contradiction, that $|\mathbf{K}|< n^{1/8}$. Let~$W=\big(U_1\cup (\cup_{\mathsf{K}\in\mathbf{K}}V(\mathsf{K}))\big)\setminus~L$. Note that, for each $\mathsf{K}_H\in\mathbf{K}$, we have
	$$|V(\mathsf{K}_H)|\leq \sum_{j=1}^{k_H}|V(F_j)|+\sum_{j=1}^{k_H}|V(P_j)|\leq\log n_H\cdot((\log n_H)^{b}+10m_H)\leq 20(\log n_H)^{b+1},$$
	which implies that $|W|\leq 2(\log n)^{6b}+n^{1/8}\cdot20(\log n_H)^{b+1}\leq n^{1/7}$. Let $W'=B_{G'}^{10\ell_0}(W)$. Since $\Delta(G')\leq e^{(\log\log n)^2}$, then $|W'|\leq 2|W|\Delta^{10\ell_0}\leq n^{1/6}$.
	
	Thus, there are at most $|W'|\Delta\leq\Delta\cdot n^{1/6}\leq nd/16$ edges in $G$ with some vertex in~$W'$. As $G-U$ contains at least $nd/8$ edges, $G-U-W'$ contains at least $nd/16$ edges. Consequently, $d(G-U-W')\geq d/8$. Then, by Theorem~\ref{thm-pass-to-expander}, $G-U-W'$ contains a bipartite $(\eps_1,\eps_2d/64)$-expander~$H$ with $\delta(H)\geq d/64$.
	
	Thus, by Lemma~\ref{lem-kraken-from-kraken} with $n_{\ref{lem-kraken-from-kraken}}=n_H:=|H|$,  $m_{\ref{lem-kraken-from-kraken}}=m_H=200\eps_1^{-1}\log^3n_H\leq m$ and some $k_H\leq\log n_H$, there exists a $(k_H,m_H,\log^{b}n_H)$-kraken in $H$, which is at distance at least $10\ell_0$-apart from every other kraken in $\mathbf{K}$ and from $U_1\setminus L$ in $G'$, as $H\subseteq G-U-W'$, a contradiction to the maximality of~$\mathbf{K}$.	
\end{poc}

Now, let $p_0=|\mathbf{K}|$ and write $\mathsf{K}_i$ for a $(k_{H_i},m_{H_i},(\log n_{H_i})^{b})$-kraken in $\mathbf{K}$, for each $i\in[p_0]$. We may assume $p_0=n^{1/8}$.
Moreover, for each $i\in[p_0]$, write $c_i\leq \log n_{H_i}$ for the length of the cycle $\bar{C_i}$ in $\mathsf{K}_i$, and denote by $F_{i,j}$, $u_{i,j}$, $P_{i,j}$, $j\in [c_i]$, the $j$-th leg, its end and the corresponding path of $\mathsf{K}_i$, respectively; that is, for each $i\in[p_0]$, $\mathsf{K}_i=(\bar{C_i},u_{i,j},F_{i,j},P_{i,j})$, $j\in[c_i]$.

\begin{claim}\label{claim-balls}
	 There exists a collection of connected set of vertices $\cZ:=\{Z_i:i\in[m^2]\}$ in $G'-U-V(\mathbf{K})$ such that for each $i\in[m^2]$, $Z_i$ has size $(\log n)^{100b}$ with diameter at most $m$ and it is at distance at least $(\log n)^{1/10}$ apart from other sets in $\cZ$ and from $U$ in $G'$.
\end{claim}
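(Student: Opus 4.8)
The plan is to build $Z_1,\dots,Z_{m^2}$ greedily, extracting one low-diameter ball from $G'$ per round while forbidding everything already committed. Before round $i$ we have $Z_1,\dots,Z_{i-1}$, and we declare the forbidden set
$$
O_i:=U\cup V(\mathbf{K})\cup B^{(\log n)^{1/10}}_{G'}\bigl(U\cup Z_1\cup\dots\cup Z_{i-1}\bigr),
$$
so that any connected set inside $G'-O_i$ is automatically disjoint from $U\cup V(\mathbf{K})$ and lies at distance at least $(\log n)^{1/10}$ in $G'$ from $U$ and from each earlier $Z_j$. The point is that $O_i$ is negligible: since $\Delta(G')\le\Delta=e^{(\log\log n)^2}$ and $|U|+\sum_{j<i}|Z_j|\le(\log n)^{2b}+m^2(\log n)^{100b}\le(\log n)^{102b}$, the ball above has size at most $(\log n)^{102b}\cdot\Delta^{(\log n)^{1/10}}=(\log n)^{102b}\,e^{(\log\log n)^2(\log n)^{1/10}}=n^{o(1)}$, because $(\log\log n)^2(\log n)^{1/10}=o(\log n)$; together with $|V(\mathbf{K})|\le p_0\cdot 20(\log n)^{b+1}\le n^{1/7}$ this gives $|O_i|\le 2n^{1/7}$.

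Given $O_i$, I would extract the ball essentially as in the proof of Claim~\ref{claim-size-P0}. First pass $G'-O_i=G-L-O_i$ to a bipartite $(\eps_1,\eps_2 d')$-expander $H_i$ with $\delta(H_i)\ge d'$, for a suitable constant $d'\ge d_0$, via Theorem~\ref{thm-pass-to-expander}; this is legitimate because deleting the $o(n/\log^2 n)$ vertices of $O_i$ destroys only $o(n)$ edges (each has $G'$-degree at most $\Delta$), so $G'-O_i$ still has average degree at least $8d'$ — provided $G'$ itself is rich, which I discuss below. Then apply Lemma~\ref{lem-find-large-ball} to $H_i$ (with empty forbidden set) to get $B_i\subseteq H_i$ with $|B_i|\ge|H_i|/25$ and diameter at most $200\eps_1^{-1}\log^3|H_i|\le m$; since $H_i\subseteq G'$, the diameter of $B_i$ in $G'$ is also at most $m$, and $B_i$ is connected. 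Finally trim $B_i$ to size exactly $(\log n)^{100b}$ using Proposition~\ref{prop:trimming} (any vertex of $B_i$ witnesses $B_i$ as a $(|B_i|,m)$-expansion of it) and let $Z_i$ be the result; then $Z_i\subseteq H_i\subseteq G-L$ gives $Z_i\cap L=\varnothing$, and $Z_i\cap O_i=\varnothing$ gives disjointness from $U\cup V(\mathbf{K})$ and the two distance requirements in $G'$. As $|O_i|\le 2n^{1/7}\ll n$ throughout, all $m^2$ rounds succeed.

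The one point that genuinely needs work — and the main obstacle — is the assertion that $G'=G-L$ is \emph{rich} (has $\Omega(nd)$ edges, hence also polynomially many vertices, so that $H_i$ and $B_i$ are large enough to be trimmed down to size $(\log n)^{100b}$). This can fail: if $L$ is a near-vertex cover of $G$ — as for the incidence graph of a projective plane, where every vertex has degree $\Theta(\sqrt n)\ge\Delta$ — then $G'$ is essentially empty. But in that regime the standing assumption of the proof of Lemma~\ref{lem-robust-kraken} is already contradicted, which is all we need: a kraken all of whose legs end in $L$ satisfies both bulleted properties of the lemma (the second one vacuously), and, since each $u\in L$ has at least $e^{(\log\log n)^2}\ge(\log n)^b$ neighbours, a trimmed star centred at $u$ is a legal $((\log n)^b,2m)$-expansion leg; so one takes a short even cycle $C$ in $G-U$ and, using the short-diameter property (Lemma~\ref{lem-short-diam-new}) and $Q_3$-freeness to control codegrees and keep the relevant neighbourhoods pairwise disjoint, attaches to $C$ disjoint short paths ending at distinct vertices of $L$, with their stars as legs — producing a $(k,2m,(\log n)^b)$-kraken in $G-U$ with the required properties, a contradiction. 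Hence we may assume $e(G')\ge nd/100$, and then the extraction above yields the collection $\cZ$.
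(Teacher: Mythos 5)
Your overall scheme --- build the $Z_i$ one at a time, forbidding the $(\log n)^{1/10}$-ball in $G'$ around everything already committed and bounding that ball by $n^{o(1)}$ via $\Delta(G')\le\Delta$ --- is the same as the paper's (which phrases it as taking a maximal collection and deriving a contradiction from $s<m^2$). The divergence, and the gap, is in how each new set is extracted. The paper applies Lemma~\ref{lem-find-large-ball} directly to the original $n$-vertex expander $G$, with $W=V(\mathbf{K})\cup B_{G'}^{(\log n)^{1/10}}(\cup_{i}Z_i\cup U)$ of size at most $\sqrt n+n^{1/7}\ll \eps_1 n/100\log^2n$; this immediately yields a connected set of size $n/25$ and diameter at most $m$, with enormous room for trimming down to $(\log n)^{100b}$. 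You instead pass $G'-O_i$ to a fresh expander $H_i$ via Theorem~\ref{thm-pass-to-expander} and apply Lemma~\ref{lem-find-large-ball} inside $H_i$. But Theorem~\ref{thm-pass-to-expander} gives no lower bound on $|H_i|$ beyond $|H_i|>\delta(H_i)\ge d'$: the subgraph it returns may have only $O(d)$ vertices (e.g.\ if $G'-O_i$ fragments into many small dense pieces), in which case Lemma~\ref{lem-find-large-ball} only guarantees $|B_i|\ge |H_i|/25$, which can be a constant --- far below the required $(\log n)^{100b}$ --- and there is nothing to trim. This is a genuine gap, and it is precisely why the paper keeps this application of Lemma~\ref{lem-find-large-ball} at scale $n$, whereas in Claim~\ref{claim-size-P0}, where passing to a possibly tiny sub-expander is unavoidable, every parameter is deliberately measured in $n_H=|H|$ rather than $n$.

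The dichotomy you append does not repair this: even granting $e(G')\ge nd/100$, the order of $H_i$ is still uncontrolled, so the gap persists. Moreover, the ``poor $G'$'' branch is itself a substantial unproved construction --- producing a cycle of length $k\le\log n$ together with $k$ pairwise disjoint short paths from its vertices to $k$ distinct vertices of $L$, internally disjoint from the cycle and from each other, is essentially the content of Lemma~\ref{lem-kraken-from-kraken} (whose proof in~\cite{GFKimKimLiu21} is described as involved), not something that follows in a line from Lemma~\ref{lem-short-diam-new} and $Q_3$-freeness. Your underlying concern about keeping the $Z_i$ inside $G'=G-L$ is reasonable to raise, but the resolution cannot come at the price of applying Lemma~\ref{lem-find-large-ball} to an expander of uncontrolled order.
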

\begin{poc}
	Take a maximal collection of sets $Z_i$, $i\in[s]$, with the claimed size, which are pairwise far apart and far from $U$ in $G'$. If $s<m^2$, then
	$$\big|B_{G'}^{(\log n)^{1/10}}(\cup_{i\in[s]}Z_i\cup U)\big|\leq 2\cdot \Big(s\cdot  (\log n)^{100b}+(\log n)^{2b}\Big)\cdot \Delta(G')^{(\log n)^{1/10}} <\sqrt{n}.$$
	Thus, by Lemma~\ref{lem-find-large-ball}	with $W_{\ref{lem-find-large-ball}}=V(\mathbf{K})\cup B_{G'}^{(\log n)^{1/10}}(\cup_{i\in[s]}Z_i\cup U)$, we can find another large set with small diameter  in $G'-U-V(\mathbf{K})$ far apart from $\cup_{i\in[s]}Z_i\cup U$ in $G'$, a contradiction to the maximality of $s$.
\end{poc}

Take now two collections of paths $\cP$ and $\cQ$ as follows.
\stepcounter{propcounter}
\begin{enumerate}[label = {\bfseries \Alph{propcounter}\arabic{enumi}}]
	\item\label{whatlunch1} Let $\cP$ be a maximal collection of paths in $G-U$ from legs $V(F_{i,j})$, $j\in[c_i],i\in [p_0]$ of krakens in~$\mathbf{K}$ to $L\setminus U$ with length at most $\ell_0$ each and such that paths from the same kraken are vertex-disjoint.
	
	Subject to $|\cP|$ being maximal, let $\ell(\cP):=\sum_{P\in\mathcal{P}}\ell(P)$ be minimised.
	
	\item\label{whatlunch2} Let $\cQ$ be a maximal collection of paths in $G-U$ from legs $V(F_{i,j})$, $j\in[c_i],i\in [p_0]$ of krakens in $\mathbf{K}$ that are not already linked to paths in $\cP$  to $\cZ$ with length at most $3m$ each and such that:
	\begin{itemize}\itemsep=0pt
		\item each $Z_i\in\cZ$ is linked to at most one leg of the same kraken; and
		\item paths in $\cP\cup\cQ$ from the same kraken are pairwise vertex-disjoint.
	\end{itemize} 
	Subject to $|\cQ|$ being maximal, let $\ell(\cQ):=\sum_{Q\in\mathcal{Q}}\ell(Q)$ be minimised.
\end{enumerate}

\begin{claim}\label{claim-free-legs}
	There is no kraken in $\mathbf{K}$ with all its legs linked to paths in $\cP\cup\cQ$.
\end{claim}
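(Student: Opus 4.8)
The plan is to argue by contradiction: suppose some kraken $\mathsf{K}_i\in\mathbf{K}$ has \emph{all} of its legs $F_{i,j}$, $j\in[c_i]$, linked to paths in $\cP\cup\cQ$. We will upgrade $\mathsf{K}_i$ to a $(c_i,2m,(\log n)^{b})$-kraken in $G-U$ with the properties demanded by Lemma~\ref{lem-robust-kraken}, contradicting our standing assumption that no such kraken exists. Partition the legs of $\mathsf{K}_i$ into those linked to $\cP$ (call this index set $J_{\cP}$) and those linked to $\cQ$ ($J_{\cQ}$). For $j\in J_{\cQ}$, the corresponding path $Q$ ends in some set $Z_{i(j)}\in\cZ$ of size $(\log n)^{100b}$ and diameter $\le m$; appending $Q$ (length $\le 3m$) to the old leg $F_{i,j}$ (an expansion of $u_{i,j}$ of radius $m_{H_i}\le m$) and the path $P_{i,j}$ gives, after trimming $Z_{i(j)}$ via Proposition~\ref{prop:trimming} to size exactly $(\log n)^{b}$, a new leg that is a $((\log n)^b, c_1 m)$-expansion of $u_{i,j}$ for a small absolute constant $c_1$; re-routing the path from $v_{i,j}$ keeps it of length $\le 10\cdot 2m$. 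For $j\in J_{\cP}$, the path $P\in\cP$ ends at a vertex $w\in L\setminus U$, which has degree $\ge e^{(\log\log n)^2}=\Delta$; the set $\{w\}\cup N_{G-U}(w)$ (possibly with one extra layer) is an expansion of $w$ of size $\gg (\log n)^{100b}$ and radius $O(1)$, so again after trimming we obtain a leg of the right size, and in this case $u_{i,j}$ is taken to be $w\in L$, so the first bullet of Lemma~\ref{lem-robust-kraken} (``$u_j\in L$'') is satisfied.

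The bookkeeping to make this rigorous has three parts. First, \textbf{disjointness}: the paths of $\cP\cup\cQ$ emanating from $\mathsf{K}_i$ are pairwise vertex-disjoint by construction of $\cP,\cQ$; the sets $Z_{i(j)}$ used are distinct (each $Z\in\cZ$ is linked to at most one leg of the same kraken), pairwise $(\log n)^{1/10}$-far apart, and avoid $U$ and $V(\mathbf{K})$; and for $j\in J_{\cP}$ the neighbourhoods $N_{G-U}(w)$ of distinct endpoints $w$ must be handled — here one either argues they can be chosen disjoint using that $|V(\mathsf{K}_i)\cup\cP\cup\cQ|$ is tiny compared to $\Delta$, or notes that overlaps only help since we only need \emph{some} large expansion. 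Second, \textbf{size and radius}: everything has radius $O(m)$, so the new cycle–path–leg structure is a $(c_i, 2m, (\log n)^b)$-kraken provided $d_0$ is large (so that $(\log n)^b\le(\log n)^{100b}$ after trimming, and $3m+m_{H_i}+10s\le 2m$ — wait, one must be slightly careful: $m=200\eps^{-1}\log^3 n$ and $m_{H_i}\le m$, so the path lengths $\le 10 m_{H_i}+3m\le 13m$, and one should instead either absorb the extra factor by noting the definition only needs length $\le 10\cdot(2m)=20m$, which indeed holds). Third, \textbf{the far-apart condition}: the new legs built from $\cQ$ lie in $G'=G-L$ (the sets $Z_i$ and the paths $Q$ can be chosen inside $G-L$, since $\cZ\subseteq G'$ and we may route $\cQ$ inside $G-L$ — this needs checking against items \ref{whatlunch2} and Claim~\ref{claim-balls}), and they inherit the $(\log n)^{1/10}$-separation from $\cZ$; the legs built from $\cP$ contain $w\in L$ so fall under the ``$u_j\in L$'' case and are exempt from the separation requirement.

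The \textbf{main obstacle} is the last point together with the path-length accounting: one must verify that the augmented legs genuinely lie in $G-L$ (resp.\ genuinely contain an $L$-vertex), are genuinely $(\log n)^{1/10}$-apart in $G-L$ — which follows because the $Z_i$'s are that far apart, the paths $\cQ$ are short ($\le 3m \ll (\log n)^{1/10}$ is \emph{false} since $m=\Theta(\log^3 n)$!), so in fact the separation must be argued more carefully: the correct statement is that the parts of the new legs that matter, namely the trimmed $Z_i$'s together with their attaching paths, stay within distance $3m$ of $Z_i$, and since we only need the legs ($F_j$'s) to be far apart — and each $F_j$ is essentially (a trimmed copy of) $Z_{i(j)}$ sitting inside its own small ball — the $(\log n)^{1/10}$-separation of the $Z_i$'s does \emph{not} immediately transfer. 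To fix this one chooses the sets in $\cZ$ to be $((\log n)^{1/10}+10m)$-apart in Claim~\ref{claim-balls} (the bound $\sqrt n$ there has ample room), so that balls of radius $5m$ around them remain $(\log n)^{1/10}$-apart; then the new legs, being contained in those balls, satisfy the requirement. The remaining verifications — that $\mathsf{K}_i$ itself was already far from $U_1\setminus L$ by \ref{pulp1}, and that $d_0(\eps_1,\eps_2,b)$ can be taken large enough for all the polylog inequalities — are routine, so I would not dwell on them.
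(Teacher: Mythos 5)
Your proposal is essentially the paper's own argument: assume some $\mathsf{K}_i$ has all legs linked, trim each linked $Z_{i(j)}\in\cZ$ down to a $((\log n)^b,m)$-expansion via Proposition~\ref{prop:trimming}, use pairwise disjoint subsets of $N(w)$ for the $L$-endpoints of paths in $\cP$, concatenate $P_{i,j}\cup F_{i,j}\cup(\text{linking path})$ into new connecting paths of length at most $20m$, and contradict the standing assumption that no $(c_i,2m,(\log n)^b)$-kraken with the stated properties exists in $G-U$.

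The one place you go astray is your ``main obstacle'' about the $(\log n)^{1/10}$-separation. There is no obstacle: in Definition~\ref{def-kraken} the legs $F_j$ and the connecting paths $P_j$ are separate objects, and the second bullet of Lemma~\ref{lem-robust-kraken} only requires the \emph{legs} lying in $G-L$ to be far apart. The new low-degree legs are exactly the trimmed sets $\widetilde{Z}_{i(j)}\subseteq Z_{i(j)}$ (the attaching paths are absorbed into the new $P'_{i,j}$, not into the legs), so the separation of the $Z_i$'s transfers verbatim. Moreover, your proposed repair --- strengthening Claim~\ref{claim-balls} so that the sets in $\cZ$ are $((\log n)^{1/10}+10m)$-apart --- would not work: the proof of that claim bounds $\bigl|B_{G'}^{r}(\cdot)\bigr|$ by a factor $\Delta(G')^{r}=e^{r(\log\log n)^2}$, which is below $\sqrt n$ only for $r\ll\log n/(\log\log n)^2$; with $r\ge 10m=\Theta(\log^3 n)$ this bound explodes far past $\sqrt n$, so the claimed ``ample room'' is not there. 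Fortunately the fix is unnecessary. The rest of your bookkeeping (disjointness of the $Y_j\subseteq N(w)$, the $20m$ path-length budget, the $L$-ended legs being exempt from the separation requirement) matches the paper and is fine.
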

\begin{poc}
	Suppose, for contradiction, that there exists $\mathsf{K}_i\in\mathbf{K}$ that has all its legs linked to paths in $\cP\cup\cQ$. Let $p=|V(\cP)\cap\bar{C_i}|$ and $q=|V(\cQ)\cap\bar{C_i}|$, and note that $p+q=c_i\leq \log n$. Let $v_1,\ldots,v_{p+q}$ be vertices in $\bar{C_i}$.
	
	Relabelled if necessary, we may assume that $v_1,\dots,v_p$ are the vertices in $\bar{C_i}$ whose corresponding legs are linked by paths in $\mathcal{P}$ to high degree vertices $u_1,\dots,u_p\in V(\cP)\cap(L\setminus U)$, and $Z_{p+1},\dots,Z_{p+q}$ are the sets in $\cZ$ linked to $\mathsf{K}_i$ by paths in $\cQ$. For $j\in[p+1,p+q]$, let~$u_j$ be the vertex in $Z_j\cap V(\cQ)$. Since $|Z_j|=(\log n)^{100b}$ for every $j\in[p+1,p+q]$, using Proposition~\ref{prop:trimming}, we can take connected sets $\widetilde{Z}_j\subseteq Z_j$ with $|\widetilde{Z}_j|=\log^{b}n$ and $u_j\in \widetilde{Z}_j$ such that all $\widetilde{Z}_j$ are pairwise disjoint.
	
	On the other hand, for each $j\in[p]$, we can choose pairwise disjoint sets $Y_j\subseteq N(u_j)$ with $|Y_j|=\log^{b}n$ that are also disjoint from $\{\widetilde{Z}_j:j\in[p+1,p+q]\}$, as $|N(u_j)|\geq\Delta$.
	
	For each $P_j\in\cP$, $j\in[p]$, we can extend it to a $v_j,u_j$-path $\widetilde{P}_{i,j}\subseteq P_j\cup F_{i,j}\cup P_{i,j}$ with $|\widetilde{P}_{i,j}|\leq \ell_0+m_{H_i}+10m_{H_i}\leq 20m$. Similarly, for each $Q_j\in\cQ$, $j\in[p+1,p+q]$, we can extend it to a $v_j,u_j$-path $\widetilde{Q}_{i,j}\subseteq Q_j\cup F_{i,j}\cup P_{i,j}$ with length $|\widetilde{Q}_{i,j}|\leq 3m+m_{H_i}+10m_{H_i}\leq 20m$.
	
	Thus, we get a kraken $\mathsf{K}_i'=(\bar{C_i},u_j,F_{i,j}',P_{i,j}')$, $j\in[c_i]$, where, for $j\in[p]$, $F_{i,j}'=G[Y_j]\cup\{u_j\}$ and $P_{i,j}'=\widetilde{P}_{i,j}$ and for $j\in[p+1,p+q]$, $F_{i,j}'=G[\widetilde{Z}_j]$ and $P_{i,j}'=\widetilde{Q}_{i,j}$, which is a $(c_i,2m,(\log n)^{b})$-kraken in $G-U$. Note that by \ref{pulp1} and the choice of $\cZ$, $\mathsf{K}_i'$ has the desired property that the legs whose ends are not in $L$ lie completely in $G'$ and are far apart from each other and from $U_1\setminus L$ in $G'$, a contradiction to our initial assumption.
\end{poc}

Therefore, for each $\mathsf{K}_i$, $i\in[p_0]$, there must exist one `free' leg $F_{i,j_0}$ which is not linked to any path in $\mathcal{P}\cup\cQ$. Note that by definition, for every $i\in[p_0]$, $F_{i,j_0}\subseteq G'$, as otherwise, if~$F_{i,j_0}$ contains some vertex $u\in L$, then we can view $\{u\}$ as a single-vertex path from $F_{i,j_0}$ to~$L$, a contradiction to the maximality of $\cP$ in \ref{whatlunch1} and $F_{i,j_0}$ being free.

Now, we shall use Lemma~\ref{lem-expand-together} to collectively expand free legs in all krakens to find one that expands well. If succeeded, we can then link this free leg to an unused set in $\cZ$ to reach the final contradiction to the maximality of $\cQ$.  To this end, we need to specify the sets $A_i,B_i,C_i$ to invoke Lemma~\ref{lem-expand-together}. Let $\mathcal{P}_i$ and $\cQ_i$ be the subcollections of paths in $\mathcal{P}$ and~$\cQ$, respectively, linked to the kraken~$\mathsf{K}_i$, and for each $i\in[p_0]$, define the sets
$$A_i:=F_{i,j_0}, \quad B_i:=\bar{C_i}\cup\Big(\bigcup_{j\in[c_i]}P_{i,j}\Big)\setminus \{u_{i,j_0}\}, \quad\text{and }\quad C_i:=V(\cP_i\cup\cQ_i).$$

\begin{claim}
	There is some $\ell\in[p_0]$ for which
	$$|B^{\ell_0}_{G-U-B_\ell-C_\ell}(A_\ell)|\geq(\log n)^{200b}.$$
\end{claim}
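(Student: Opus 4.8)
The plan is to apply Lemma~\ref{lem-expand-together} to the triples $(A_i,B_i,C_i)$, $i\in[p_0]$, with parameter $k=200b$ (so that $\log^k n=(\log n)^{200b}$) and with the avoided set $U$; the conclusion then gives some $\ell\in[p_0]$ with $|B^{\ell_0}_{G-U-B_\ell-C_\ell}(A_\ell)|\geq\log^{200b}n$, exactly as desired. So the whole task is to verify the five hypotheses \ref{exptog}--\ref{exptog5} of Lemma~\ref{lem-expand-together}, plus the size bounds $|U|\leq\exp((\log\log n)^2)$ and $p_0\geq n^{1/8}$. The latter two are immediate: $|U|\leq(\log n)^{2b}\ll\exp((\log\log n)^2)$, and $p_0=n^{1/8}$ by Claim~\ref{claim-size-P0} and our normalisation; set $r=p_0$.

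First I would check the easy conditions. For \ref{exptog}, $A_i=F_{i,j_0}$ is a leg, hence a $(\cdot,(\log n_{H_i})^{b})$-expansion, so $|A_i|=(\log n_{H_i})^{b}\geq d_0$ since $n_{H_i}\geq d/64$ is large. For \ref{exptog2}: $A_i=F_{i,j_0}$ is disjoint from $B_i=\bar{C_i}\cup(\bigcup_j P_{i,j})\setminus\{u_{i,j_0}\}$ by the definition of a kraken (legs are disjoint from the cycle and from each other, and $P_{i,j_0}$ meets $F_{i,j_0}$ only in $u_{i,j_0}$, which we removed), and disjoint from $C_i=V(\cP_i\cup\cQ_i)$ since the free leg $F_{i,j_0}$ is by construction not linked to $\cP\cup\cQ$, and the paths in $\cP_i,\cQ_i$ from $\mathsf{K}_i$ are internally disjoint from its legs except at the attaching leg; one also checks $B_i,C_i\subseteq V(G)\setminus U$ since all krakens, all of $\cZ$, and all paths in $\cP\cup\cQ$ live in $G-U$. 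The size bound $|B_i|\leq|A_i|/\log^{10}|A_i|$ needs $|B_i|\leq c_i(1+10m_{H_i})\leq\log n\cdot 11\cdot 200\eps_1^{-1}\log^3n\leq(\log n)^5$, which is indeed $\leq(\log n_{H_i})^b/\log^{10}((\log n_{H_i})^b)$ since $b\geq10$ and $n_{H_i}$ is polynomially related to $n$ (one should check $n_{H_i}\geq n^{c}$ for some constant, which follows from $n_{H_i}\geq d/64\geq d_0$ being large, but more carefully we need $(\log n_{H_i})^b\gg(\log n)^{15}$; since $b\geq10$ this is fine as $\log n_{H_i}$ and $\log n$ are comparable up to constants via $d/64\leq n_{H_i}\leq n$ and $d\geq d_0$ large — actually one only needs $n_{H_i}\geq\exp((\log n)^{c})$ for suitable $c$, which may require noting $n_{H_i}\geq d$; I would spell this out).

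Next, the thinness condition \ref{exptog3}: $C_i=V(\cP_i\cup\cQ_i)$ must be $(\sqrt{|A_i|},1)$-thin around $A_i$ in $G-U-B_i$. Here is where the minimality clauses in \ref{whatlunch1} and \ref{whatlunch2} (``subject to maximality, $\ell(\cP)$, $\ell(\cQ)$ minimised'') are used: if some sphere $N_{G-U-B_i}(B^{j-1}_{G-U-B_i-C_i}(A_i))$ met $C_i$ in more than $\sqrt{|A_i|}\cdot j$ vertices, then there would be a path from the free leg $A_i$ of length $\leq j+1$ reaching far into some path $P\in\cP_i\cup\cQ_i$, which would let us reroute $P$ through $A_i$ and shorten $\ell(\cP)$ or $\ell(\cQ)$ (or, if that path reaches $L$ or $\cZ$ faster, contradict maximality of $|\cP|$ or $|\cQ|$); more precisely, one argues that such a dense intersection forces a shortcut to the endpoint ($L$ or $Z$) of that path contradicting minimality. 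I expect \textbf{this is the main obstacle} — it is the only genuinely nontrivial verification, and it is exactly the variant discussed in the remark before Lemma~\ref{lem-expand-together} (thinness parameter $\sqrt{|A_i|}$ rather than $4$), so the argument has to be adapted with that in mind; the key point is that $\sqrt{|A_i|}=(\log n_{H_i})^{b/2}$ is comfortably larger than the number of paths $c_i\leq\log n$ times any polylog bound on path lengths, so even crude counting of how $C_i$ can intersect spheres suffices once the shortcut/minimality mechanism is set up.

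Finally, \ref{exptog4}: every vertex in $B^{\ell_0}_{G-U-B_i-C_i}(A_i)$ has at most $d/2$ neighbours in $U$. But $A_i=F_{i,j_0}\subseteq G'=G-L$ and, more importantly, any such vertex lies in $G-U-B_i-C_i$ and at distance $\leq\ell_0$ from $A_i$ in $G'$; we built $U_1=U\cup U_0$ with $U_0=\{v\notin U:d_G(v,U)\geq d/2\}$, so it suffices that this ball avoids $U_0$. Since the free legs were taken far apart from $U_1\setminus L$ in $G'$ by \ref{pulp1}, and the ball has radius $\ell_0$, it misses $U_0$, giving the bound. And \ref{exptog5}: for $i\neq j$, $A_i$ and $A_j$ are at distance $\geq2\ell_0$ in $G-U-B_i-C_i-B_j-C_j$ — this is precisely \ref{pulp1}, which guarantees legs of distinct krakens in $\mathbf{K}$ are $\geq10\ell_0$-apart in $G'$, hence certainly $\geq2\ell_0$-apart in the smaller graph $G-U-B_i-C_i-B_j-C_j\subseteq G'$ (noting all these deleted sets lie in $G$, and the free legs lie in $G'$, so distances in $G'$ only go up when we delete more). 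With all hypotheses in hand, Lemma~\ref{lem-expand-together} yields the claim.
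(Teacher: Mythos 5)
Your overall route is the same as the paper's: apply Lemma~\ref{lem-expand-together} with $k=200b$ to the triples $(A_i,B_i,C_i)$ and verify \ref{exptog}--\ref{exptog5}, using the minimality of $\ell(\cP),\ell(\cQ)$ for the thinness condition and \ref{pulp1} for the separation conditions. However, two of your verifications have genuine gaps.

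First, in \ref{exptog3} your closing count is wrong as stated: you compare $\sqrt{|A_i|}=(\log n_{H_i})^{b/2}$ against ``$c_i\le\log n$ times a polylog-in-$n$ bound on path lengths''. Since $n_{H_i}$ can be as small as $d/64$ (a constant) while $n\to\infty$, the quantity $(\log n_{H_i})^{b/2}$ need not dominate any power of $\log n$, so crude counting of how much of $C_i$ can meet a sphere does not suffice. The correct count is per sphere and per path: the minimality of $\ell(\cP)$ and $\ell(\cQ)$ forces $|N_{G-U-B_i}(B^{r-1}_{G-U-B_i-C_i}(A_i))\cap V(P)|\le r+1$ for each individual path $P\in\cP_i\cup\cQ_i$ (otherwise reroute $P$ through the ball to obtain a shorter path to its endpoint in $L\setminus U$ or $\cZ$), and one then multiplies by the number of paths, which is at most $c_i\le\log n_{H_i}\le|A_i|^{1/4}$ --- crucially a bound in terms of $n_{H_i}$, not $n$ --- giving $|A_i|^{1/4}(r+1)\le\sqrt{|A_i|}\,r$. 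The same remark resolves the worry you flag under \ref{exptog2}: bound $|B_i|$ using $m_{H_i}=200\eps^{-1}\log^3 n_{H_i}$ rather than the global $m$, so that $|B_i|\le(\log n_{H_i})^{6}\le|A_i|/\log^{10}|A_i|$ once $b\ge10$ and $d$ is large.

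Second, your arguments for \ref{exptog4} and \ref{exptog5} both implicitly assume that the ball $B^{\ell_0}_{G-U-B_i-C_i}(A_i)$ is governed by distances in $G'=G-L$; in particular your assertion that $G-U-B_i-C_i-B_j-C_j\subseteq G'$ is false, since that graph still contains all of $L\setminus(U\cup B_i\cup C_i\cup B_j\cup C_j)$, and a short connecting path could run through $L$, where \ref{pulp1} gives no control. The missing step is to observe that, by the maximality of $\cP$ in \ref{whatlunch1}, there is no path of length at most $\ell_0$ from the free leg $A_i$ to $L\setminus U$ in $G-U-B_i-C_i$, whence $B^{\ell_0}_{G-U-B_i-C_i}(A_i)=B^{\ell_0}_{G-L-U-B_i-C_i}(A_i)$ lies entirely in $G'$; only after this do \ref{pulp1} and the definition of $U_0$ yield \ref{exptog4}, and the disjointness of the two $\ell_0$-balls yield the $2\ell_0$-separation in \ref{exptog5}.
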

\begin{poc}
	In order to prove this claim, we have to show that conditions \emph{\ref{exptog}}--\emph{\ref{exptog5}} hold with $A_i,B_i,C_i$ as above. First, by \ref{pulp2}, $|A_i|=(\log n_{H_i})^{b}\geq(\log (d/64))^{b}$ can be taken suffiently large by taking $d\ge d_0(\eps_1,\eps_2,b)$ large, thus \emph{\ref{exptog}} holds.
	
	It is clear from the choice of $A_i$ that it is disjoint from $B_i\cup C_i$ in $V(G)\setminus U$. Moreover, as $b\ge 10$, again by \ref{pulp2}, we have
	$$|B_i|\leq\log n_{H_i}+\log n_{H_i}\cdot10m_{H_i}\leq (\log n_{H_i})^{6}\leq\frac{|A_i|}{\log^{10}|A_i|},$$
	so that \emph{\ref{exptog2}} holds.
	
	For \emph{\ref{exptog3}}, note that for any path $P\in\cP_i\cup\cQ_i$ and any $r\in\mathbb{N}$, we have
	$$|N_{G-U-B_i}(B_{G-U-B_i-C_i}^{r-1}(A_i))\cap V(P)|\leq  r+1.$$
	Indeed, if this is not the case, we can replace the initial segment of at least $r+1$ vertices in~$P$ by the length-$r$ path in $B_{G-U-B_i}(B_{G-U-B_i-C_i}^{r-1}(A_i))$ to obtain a new path, shorter than~$P$, from $A_i$ to the endvertex of $P$ in $L\setminus U$ or $\cZ$. This contradicts the minimality of $\ell(\cP)$ or~$\ell(\cQ)$ in \ref{whatlunch1} and \ref{whatlunch2}.
	
	Thus, as $|\cP_i\cup \cQ_i|\le c_i\le \log n_{H_i}\le |A_i|^{1/4}$, we have that
	$$|N_{G-U-B_i}(B_{G-U-B_i-C_i}^{r-1}(A_i))\cap C_i|\leq\log n_{H_i}\cdot (r+1)\leq\sqrt{|A_i|}\cdot r.$$
	That is, $C_i$ is $(\sqrt{|A_i|},1)$-thin around $A_i$ in $G-U-B_i$, yielding~\emph{\ref{exptog3}}.
	
	Note that there is no path with length at most $\ell_0$ from $A_i$ to $L\setminus U$ in $G-U-B_i-C_i$, as otherwise it would be a contradiction to the maximality of $\mathcal{P}$. Thus, we have that $B^{\ell_0}_{G-U-B_i-C_i}(A_i)=B^{\ell_0}_{G-L-U-B_i-C_i}(A_i)$, which, by \ref{pulp1}, is disjoint from $U_1$, and by the choice of $U_0\subseteq U_1$, we have that \emph{\ref{exptog4}} holds.
	
	Similarly, for any $j\in[p_0]\setminus\{i\}$, we have that $B^{\ell_0}_{G-U-B_j-C_j}(A_j)=B^{\ell_0}_{G-L-U-B_j-C_j}(A_j)$. So by \ref{pulp1}, $B^{\ell_0}_{G-U-B_j-C_j}(A_j)$ and $B^{\ell_0}_{G-U-B_i-C_i}(A_i)$ are disjoint. In particular, $A_i$ and $A_j$ are at distance at least $2\ell_0$ apart in $G-U-B_i-C_i-B_j-C_j$, and hence \emph{\ref{exptog5}} holds.
	
	Therefore, Lemma~\ref{lem-expand-together} applies and so there is some $\ell\in[p_0]$ for which
	$$|B^{\ell_0}_{G-U-B_\ell-C_\ell}(A_\ell)|\geq(\log n)^{200b},$$
	as claimed.
\end{poc}

Partition $\cZ=\mathcal{B}\cup\mathcal{B}'$, where $\mathcal{B}'$ is the subcollection of sets in $\cZ$ that are linked through paths in~$\cQ$ to the kraken $\mathsf{K}_\ell$, and  $\mathcal{B}=\cZ\setminus\mathcal{B}'$. We now show that there is a path of length at most $3m$ from $A_\ell$ to $\mathcal{B}$ avoiding $V(\mathcal{B}')\cup U\cup B_\ell\cup C_\ell$, which will lead us to a contradiction to the maximality of $\mathcal{Q}$, and will complete the proof. For this, we observe that
$$|V(\mathcal{B}')\cup U\cup B_\ell\cup C_\ell|\leq\log n_{H_{\ell}}\cdot(\log n)^{100b}+(\log n)^{2b}+(\log n_{H_{\ell}})^{6}+\log n_{H_{\ell}}\cdot 3m\leq 2(\log n)^{100b+1}.$$
Therefore, we have that 
$$10|B^{\ell_0}_{G-U-B_\ell-C_\ell}(A_\ell)|\geq10(\log n)^{200b}\geq\log^3n\cdot|V(\mathcal{B}')\cup U\cup B_\ell\cup C_\ell|$$ 
and 
$$10|V(\mathcal{B})|\geq10(m^2-\log n)(\log n)^{100b}\geq\log^3n\cdot|V(\mathcal{B}')\cup U\cup B_\ell\cup C_\ell|.$$
Then applying Lemma~\ref{lem-short-diam-new} with $(A,B,W)_{\ref{lem-short-diam-new}}=(B^{\ell_0}_{G-U-B_\ell-C_\ell}(A_\ell),V(\mathcal{B}),V(\mathcal{B}')\cup U\cup B_\ell\cup C_\ell)$, we get a path from $B^{\ell_0}_{G-U-B_\ell-C_\ell}(A_\ell)$ to $\mathcal{B}$ of length at most $40\eps_1^{-1}\log^3n$, which can be extended to an $A_\ell,\mathcal{B}$-path of length at most $\ell_0+40\eps_1^{-1}\log^3n\leq3m$, contradicting the maximality of $\mathcal{Q}$, and completing the proof. 
\end{proof}

\section{Krakens hand in hand}\label{sec-link-2krakens}
\begin{proof}[Proof of Lemma~\ref{lem-link-2pulp-adj}]
	Let $\mathsf{K}_\alpha,\mathsf{K}_\beta$ be as given and fix $\log^{7}n\leq \ell\leq \log^{t}n$ with $\ell=\pi(v_1^{\alpha},v_1^{\beta},G)$. Set $\ell_0=(\log\log n)^{20}$ and $\Delta=e^{(\log\log n)^2}$. Since $G$ is bipartite, we know that $s$ is even. Without loss of generality, we may assume that $v_1^{\alpha},v_3^{\alpha},\dots,v_{s-1}^{\alpha}$, and $v_1^{\beta},v_3^{\beta},\dots,v_{s-1}^{\beta}$ lie in one side of the bipartition of $G$, while $v_2^{\alpha},v_4^{\alpha},\dots,v_s^{\alpha}$, and $v_2^{\beta},v_4^{\beta},\dots,v_s^{\beta}$ lie in the other side. In particular, $\ell$, the length of each path $Q_i$ to be constructed, is even. We call a leg of $\mathsf{K}_\alpha$ or $\mathsf{K}_\beta$ a low degree leg if it lies completely in $G-L$ and high degree leg otherwise.
	
	We sequentially find pairwise disjoint $v_j^{\alpha},v_j^{\beta}$-path $Q_j$ such that, for each $j\in[s]$, $Q_j$ is disjoint from $\bigcup_{i=j+1}^s V(P_{i}^\alpha\cup F_{i}^{\alpha}\cup P_{i}^\beta\cup F_{i}^{\beta})$, as follows. Suppose we have already constructed $Q_1,Q_2,\dots,Q_k$, and we want to embed $Q_{k+1}$, for some $k\in\{0,1,\dots,s-1\}$. We do so by connecting the legs $F_{k+1}^\alpha$ and $F_{k+1}^\beta$. For $\sigma\in\{\alpha,\beta\}$, if $F_{k+1}^\sigma$ is a low degree leg, then we first expand it. 
	
	To be precise, we consider the following three cases. See Figure~\ref{fig-link-kraken}.
	
	\begin{figure}[h]
		\centering
		\begin{tikzpicture}
		\draw[black] (2,0) circle (0.5cm);
		\draw[black] (0,-3) circle (0.4cm);
		\draw[black] (2.5,-3) circle (0.4cm);
		\draw[black] (4.5,-0.5) circle (0.4cm);
		\node[inner sep= 1pt](a1) at (1.55,-0.2)[circle,fill]{};
		\node[inner sep= 1pt](a2) at (2.3,-0.4)[circle,fill]{};
		\node[inner sep= 1pt](a3) at (2.5,-0.1)[circle,fill]{};
		\node[inner sep= 1pt](a4) at (4.1,-0.4)[circle,fill]{};
		\node[inner sep= 1pt](a5) at (5.5,-0.5)[circle,fill]{};
		\node[inner sep= 1pt](a6) at (2.45,0.2)[circle,fill]{};
		\node[inner sep= 1pt](a7) at (2.1,0.48)[circle,fill]{};
		\draw[decorate, decoration=snake, segment length=6mm,darkpastelgreen] (a1) -- (0,-2.6);
		\draw[decorate, decoration=snake, segment length=6mm,darkpastelgreen] (a2) -- (2.5,-2.6);
		\draw[decorate, decoration=snake, segment length=6mm,darkpastelgreen] (a3) -- (a4);
		\draw[decorate, decoration=coil, segment length=8mm,red] (a4) -- (a5);
		\draw[red] (a5) -- (6.2,0);
		\draw[red] (a5) -- (6.2,-1);
		\draw[red] (6.2,-1) -- (6.2,0);
		\draw[black] (12,0) circle (0.5cm);
		\draw[black] (11.5,-3) circle (0.4cm);
		\draw[black] (13.5,-3) circle (0.4cm);
		\draw[black] (9.5,-1) circle (0.4cm);
		\node[inner sep= 1pt](b1) at (11.8,-0.45)[circle,fill]{};
		\node[inner sep= 1pt](b2) at (12.3,-0.4)[circle,fill]{};
		\node[inner sep= 1pt](b3) at (11.5,-0.1)[circle,fill]{};
		\node[inner sep= 1pt](b4) at (11.55,0.2)[circle,fill]{};
		\node[inner sep= 1pt](b5) at (12.1,0.48)[circle,fill]{};
		\draw[decorate, decoration=snake, segment length=6mm,darkpastelgreen] (b1) -- (11.5,-2.6);
		\draw[decorate, decoration=snake, segment length=6mm,darkpastelgreen] (b2) -- (13.5,-2.6);
		\draw[decorate, decoration=snake, segment length=6mm,darkpastelgreen] (b3) -- (9.9,-0.9);
		\draw[deepcarmine, thick] (9.3,-0.65) -- (8.3,-0.3);
		\draw[deepcarmine, thick] (9.4,-1.4) -- (8.3,-1.7);
		\draw[deepcarmine, thick] (8.3,-0.3) -- (8.3,-1.7);
		\draw[decorate, decoration=snake, segment length=18mm,darkblue] (a6) .. controls (8,0.5) .. (b4);
		\draw[decorate, decoration=snake, segment length=18mm,darkblue] (a7) .. controls (7,1.5) .. (b5);
		\node[inner sep= 1pt,black](t1) at (7,1){\small$\vdots$};
		\node[inner sep= 1pt,darkblue](t2) at (7.5,1.6){\small$Q_1$};
		\node[inner sep= 1pt,darkblue](t3) at (7.5,0.6){\small$Q_k$};
		\node[inner sep= 1pt,black](t4) at (1.1,0.3){\small$C_\alpha$};
		\node[inner sep= 1pt,black](t5) at (12.9,0.3){\small$C_\beta$};
		\node[inner sep= 1pt,darkpastelgreen](t6) at (3.3,-0.6){\small$P_{k+1}^\alpha$};
		\node[inner sep= 1pt,black](t7) at (4.3,-1.2){\small$F_{k+1}^\alpha$};
		\node[inner sep= 0.5pt,black](t8) at (3.9,-0.1){\small$u_{k+1}^\alpha$};
		\node[inner sep= 0.3pt,black](t9) at (5.4,-0.1){\small$w_{k+1}^\alpha$};
		\node[inner sep= 0.5pt,red](t10) at (5.3,-0.8){\small$R_{k+1}^\alpha$};
		\node[inner sep= 0.5pt,red](t11) at (4.4,-1.8){\small Case $3$};
		\node[inner sep= 1pt,black](t12) at (3.4,-3){\small$F_{k+2}^\alpha$};
		\node[inner sep= 1pt,black](t13) at (1.2,-3){\small$\cdots$};
		\node[inner sep= 1pt,deepcarmine](t14) at (8.6,-2){\small$B_{G-Z}^{\ell_0}(F_{k+1}^\beta)$};
		\node[inner sep= 1pt,black](t15) at (10.1,-1.5){\small$F_{k+1}^\beta$};
		\node[inner sep= 1pt,darkpastelgreen](t16) at (11,-0.8){\small$P_{k+1}^\beta$};
		\node[inner sep= 1pt,black](t17) at (10.5,-3){\small$F_{k+2}^\beta$};
		\node[inner sep= 1pt,black](t18) at (12.5,-3){\small$\cdots$};
		\node[inner sep= 0.5pt,deepcarmine](t19) at (8.6,-2.7){\small Case $2$};
		\node[inner sep= 1pt,black](t20) at (-0.6,-2.6){\small$F_{s}^\alpha$};
		\node[inner sep= 1pt,black](t21) at (14.1,-2.6){\small$F_{s}^\beta$};
		\end{tikzpicture}
		\caption{An illustration of the proof of Lemma~\ref{lem-link-2pulp-adj}.}\label{fig-link-kraken}
	\end{figure}
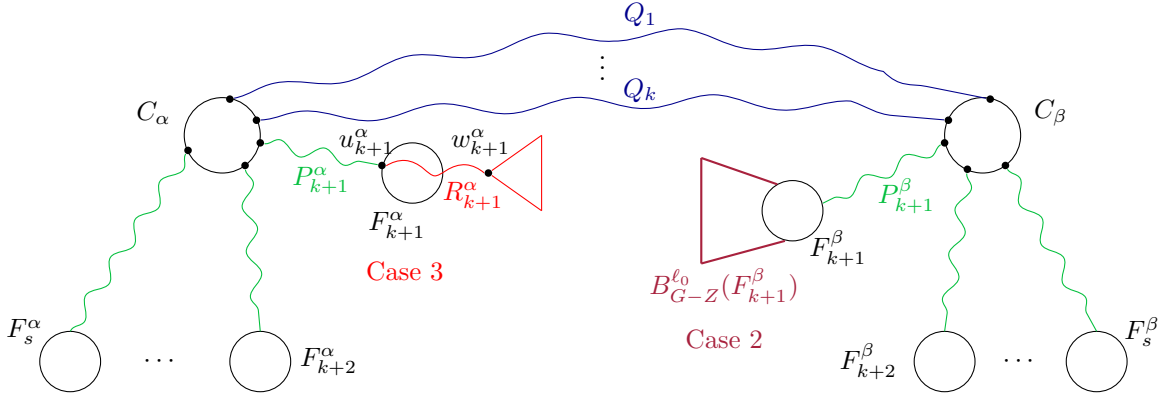
	
	\smallskip
	
	\noindent\emph{Case 1.} If $u_{k+1}^{\sigma}\in L$, then set $$X_{k+1}^{\sigma}=N(u_{k+1}^{\sigma})\cup V(P_{k+1}^{\sigma}).$$ 
	Note that in this case, $X_{k+1}^{\sigma}$ is a $(\Delta,10m+1)$-expansion of $v_{k+1}^{\sigma}$.
	
	\medskip
	
	Suppose then $u_{k+1}^{\sigma}\not\in L$ and so $F_{k+1}^\sigma\subseteq G-L$. Set 
	$$Z:=V(C_\alpha)\cup V(C_\beta)\cup \Big(\bigcup_{j=1}^sV(P_j^\alpha\cup P_j^\beta)\Big) \cup \Big(\bigcup_{i=1}^kV(Q_i)\Big).$$
	We wish to expand $F_{k+1}^{\sigma}$ in $G-Z$ while avoiding all unused legs $\cup_{i=k+2}^{s}P_{i}^\alpha\cup F_i^\alpha\cup P_{i}^\beta\cup F_i^\beta$.
	
	\smallskip
	
	\noindent\emph{Case 2.} If $F_{k+1}^{\sigma}$ does not intersect $L$ after expanding $\ell_0$ steps in $G-Z$, namely, $B_{G-Z}^{\ell_0}(V(F_{k+1}^{\sigma}))$ is disjoint from $L$. By the hypothesis, all low degree legs in $\mathsf{K}_\alpha, \mathsf{K}_\beta$ are pairwise a distance $(\log n)^{1/10}$ apart in $G-L$, and hence we see that $B_{G-Z}^{\ell_0}(V(F_{k+1}^{\sigma}))$ is disjoint from all unused low degree legs. Since
	$$|Z|\leq 2s+2s\cdot 10m+s\log^{t}n\leq 2\log^{t+1}n\le \frac{1}{4}\,\eps(|F_{k+1}^\sigma|)\cdot|F_{k+1}^\sigma|,$$
    we can apply Proposition~\ref{prop-exp-HL} with $(X,Y,W,r)_{\ref{prop-exp-HL}}=(F_{k+1}^\sigma,Z,\varnothing,\ell_0)$ to get that $$|B_{G-Z}^{\ell_0}(V(F_{k+1}^\sigma))|\geq \Delta.$$ 
    In this case, set 
    $$X_{k+1}^\sigma=B_{G-Z}^{\ell_0}(V(F_{k+1}^\sigma))\cup V(P_{k+1}^{\sigma}),$$ 
    which is a $(\Delta,11m+\ell_0)$-expansion of $v_{k+1}^{\sigma}$.
	
	\smallskip
	
	\noindent\emph{Case 3.} Suppose that $B_{G-Z}^{\ell_0}(V(F_{k+1}^{\sigma}))$ intersects $L$. Let $R_{k+1}^{\sigma}$ be a shortest $u_{k+1}^{\sigma}, L$-path in $B_{G-Z}^{\ell_0}(V(F_{k+1}^{\sigma}))$, which has length at most $m+\ell_0\le 2m$. In this case, let $w_{k+1}^{\sigma}$ be the endpoint of $R_{k+1}^{\sigma}$ in $L$ and set 
	$$X_{k+1}^\sigma=V(R_{k+1}^{\sigma})\cup N(w_{k+1}^{\sigma})\cup V(P_{k+1}^{\sigma}),$$
	which a $(\Delta,12m+1)$-expansion of $v_{k+1}^{\sigma}$.
	
	\smallskip
	
	Now, let 
	$$\hat{Z}=\Big(\bigcup_{i=1}^kV(Q_i)\Big)\cup \left(\bigcup_{i=k+2}^{s}V(P_{i}^\alpha\cup F_{i}^\alpha\cup P_i^\beta\cup F_i^\beta)\right).$$
	Note that
	$$|\hat{Z}|\le s\log^{t}n+2s\cdot (10m+(\log n)^{2t})\le (\log n)^{3t}.$$
	Trim $X_{k+1}^\sigma$, using Proposition~\ref{prop:trimming}, down to a $((\log n)^{4t},20m)$-expansion of $v_{k+1}^{\sigma}$ disjoint from $\hat{Z}$; call it $\hat{X}_{k+1}^\sigma$. Finally, apply Lemma~\ref{lem-finalconnect} with $$(D,U,F_1,F_2,v_1,v_2)_{\ref{lem-finalconnect}}=((\log n)^{4t}, \hat{Z},\hat{X}_{k+1}^\alpha,\hat{X}_{k+1}^\beta,v_{k+1}^\alpha,v_{k+1}^{\beta})$$
	to obtain the desired $v_{k+1}^{\alpha},v_{k+1}^{\beta}$-path $Q_{k+1}$ disjoint from $\bigcup_{i=k+2}^s V(P_{i}^\alpha\cup F_{i}^{\alpha}\cup P_{i}^\beta\cup F_{i}^{\beta})$. Carrying out the embeddings for all $k\in\{0,1,\dots,s-1\}$ finishes the proof.
\end{proof}


\section*{Acknowledgements}
We would like to thank Richard Montgomery for fruitful discussions.

\end{document}